\theoremstyle{plain}
\newtheorem{thm}{Theorem}[section]
\newtheorem{thm*}{Theorem}[section]
\newtheorem{cor}[thm]{Corollary}
\newtheorem{prop}[thm]{Proposition}
\newtheorem{lemma}[thm]{Lemma}
\newtheorem{lemma*}{Lemma}
\theoremstyle{definition}
\newtheorem{defn}[thm]{Definition}
\newtheorem{remark}[thm]{Remark}
\newtheorem*{remark*}{Remark}
\newtheorem{ex}[thm]{Example}
\newtheorem{notation}[thm]{Notation}
\newtheorem{question*}{Question}
\numberwithin{equation}{thm}
\newcommand{\bM}{\mathbb M}
\newcommand{\cN}{\mathcal N}
\newcommand{\cU}{\mathcal U}
\newcommand{\bG}{\mathbb G}
\newcommand{\bA}{\mathbb A}
\newcommand{\bF}{\mathbb F}
\newcommand{\cC}{\mathcal C}
\newcommand{\cE}{\mathcal E}
\newcommand{\fg}{\mathfrak g}
\newcommand{\fh}{\mathfrak h}
\newcommand{\fu}{\mathfrak u}
\newcommand{\p}{\mathfrak p}
\newcommand{\ol}{\overline}
\newcommand{\ul}{\underline}
\def\sl2{\operatorname{SL_{2(2)}}\nolimits}
\def\Ga2{\operatorname{\mathbb G_{\rm a(2)}}\nolimits}
\newcommand{\bN}{\mathbb N}
\newcommand{\bu}{\bullet}
\date\today
\begin{document}

 \title[Filtrations, 1-parameter subgroups, and rational injectivity]{Filtrations, 1-parameter subgroups, and rational injectivity}
 
 \author[ Eric M. Friedlander]
{Eric M. Friedlander$^{*}$} 

\address {Department of Mathematics, University of Southern California,
Los Angeles, CA}
\email{ericmf@usc.edu}
\email{eric@math.northwestern.edu}

\thanks{$^{*}$ partially supported by NSA Grant H98230-15-1-0029 }

\subjclass[2010]{20G05, 20C20, 20G10}

\keywords{filtrations, 1-parameter subgroups, coalgebras, rational injectivity}

\begin{abstract}
We investigate rational $G$-modules $M$ for a linear algebraic group $G$ over an algebraically
closed field $k$ of characteristic $p > 0$ using filtrations 
by sub-coalgebras of the coordinate algebra $k[G]$ of $G$.  Even in the special case of the additive
group $\bG_a$, interesting structures and examples are revealed.  The ``degree" filtration we consider
for unipotent algebraic groups leads to a  ``filtration by exponential degree" applicable to
rational $G$ modules for any linear algebraic group $G$ of exponential type; this filtration is defined
in terms of 1-parameter subgroups and is related to support varieties introduced recently by the
author for such rational $G$-modules.   We formulate in terms of this filtration a necessary and
sufficient condition for rational injectivity for rational $G$-modules.  Our investigation 
leads to the consideration of two new classes of rational $G$-modules: those that are ``mock injective" 
and those that are ``mock trivial".
\end{abstract}

\maketitle


\section{Introduction}

Beginning with the very special case of the additive group $\bG_a$, we consider the filtration
by degree on rational $\bG_a$-modules which enables us to better understand the 
intriguing category $(\bG_a\text{-}Mod)$ of rational $\bG_a$-modules.  This filtration leads 
to a similarly defined filtration by degree on rational $U_N$-modules, where $U_N \subset GL_N$ is the
closed subgroup of strictly upper triangular matrices, and determines a filtration on rational 
$U$-modules for a closed linear subgroup $U \subset U_N$.
We then initiate the study of a less evident filtration on rational $G$-modules for $G$ a linear
algebraic group of exponential type.
For rational $U_N$-modules for the unipotent algebraic group $U_N$, this filtration 
of exponential degree is a comparable to the more elementary filtration by degree we first consider. 
Throughout, we fix an algebraic closed field $k$ of characteristic $p > 0$ and consider (smooth) linear
algebraic groups over $k$ together with their rational actions on $k$-vector spaces.

In some sense, this paper is a sequel to the author's recent paper \cite{F15} in which a theory
of support varieties $M \mapsto V(G)_M$ was constructed for rational $G$-modules.  The
construction of the filtration by exponential degree $\{ M_{[0]} \subset M_{[1]} \subset \cdots \subset M \}$ 
uses restrictions of $M$ to 1-parameter subgroups $\bG_a \to G$, and thus is based upon actions 
of $G$ on $M$ at $p$-unipotent elements of $G$.   The role of 1-parameter subgroups to study
rational $G$-modules was introduced in \cite{F11}; the property of $p$-unipotent degree introduced
in \cite[2.5]{F11} is the precursor to our filtration by exponential degree.
The origins of this approach to filtrations lie in considerations of support varieties for modules
for infinitesimal group schemes, varieties which are defined in terms of $p$-nilpotent actions on such modules.

The basic theme of this paper is to investigate rational $G$-modules through their
restrictions via 1-parameter subgroups $\bG_a \to G$.  Whereas the support
variety construction $M \mapsto V(G)_M$ is defined in terms of restrictions of $M$ to a $p$-nilpotent
operator associated to each 1-parameter subgroup of $G$, our present approach involves the 
full information of the restriction of $M$ to all 1-parameter subgroups by using filtrations on 
the category of rational $G$-modules.    We give a necessary and sufficient condition for rational 
injectivity of a rational $G$-module, something we have not succeeded in doing using support varieties.  
Moreover, these filtrations enable us to formulate and study the classes of mock injective modules
(those whose restrictions to every Frobenius kernel are injective) and of mock trivial modules
(those for which actions at 1-parameter subgroups are trivial).  These modules are somewhat 
elusive to construct, but can be shown to exist in great numbers and have interesting properties.

Perhaps the groups of most interest are reductive groups, especially simple groups of classical type.
For such groups $G$, it is instructive to compare the approach in this paper and in \cite{F15} with 
traditional considerations of weights for the action of a maximal torus $T \subset G$ on a rational
$G$-module.  Whereas consideration of weights for $T$ are highly suitable in 
classifying irreducible rational $G$-modules, the action at $p$-unipotent elements has the potential
of recognizing extensions of such modules.
Although our filtration by exponential degree involves actions at $p$-unipotent elements of $G$, 
Example \ref{weights} shows that a bound on the $T$-weights for a rational $G$-module $M$ 
determines a bound on the exponential degree of $M$ for $G$ reductive (where $T$ is a maximal
torus for $G$).  We emphasize that the
filtration by exponential degree applies to rational modules for unipotent groups whose rational
modules are not equipped with a torus action.

We sketch the contents of this paper.  We begin with the most elementary example $G = \bG_a$.
Indeed, this effort was in part motivated by the prospect of establishing a ``local criterion" for a rational
$\bG_a$-module $M$ to be rationally injective  using the support variety $V(\bG_a)_M$, or
equivalently using the restrictions of $M$ to all Frobenius kernels $\bG_{a(r)}$ of $\bG_a$.
Proposition \ref{fail} provides counter-examples to our (unwritten) conjecture that rational injectivity
of rational $\bG_a$-modules is detected in this ``local fashion."  The filtration we consider for
rational $\bG_a$-modules arises from a filtration 
of the coordinate algebra $k[\bG_a]$ by sub-coalgebras $k[\bG_a]_{<d}$.  As observed in  
Proposition \ref{prop-Ga},  the category of comodules
for the sub-coalgebra $k[\bG_a]_{< p^r}$ is naturally isomorphic to the category of rational modules for the
 infinitesimal kernels  $\bG_{a(r)}$ of the linear algebraic group $\bG_a$.  This correspondence is
 very special, following from the simple
 observation that the restriction maps $k[\bG_a] \to k[\bG_{a(r)}]$ are split as maps of coalgebras.
 
 In Section 2, we extend our consideration of filtrations to rational $U$-modules for a unipotent 
 algebraic group $U$ equipped with an embedding in some $U_N$.  To investigate some some properties  
 of a rational $U$-module $M$, we find it more useful to consider the submodules $M_{<p^r} \subset M$
 occurring in the filtration of $M$ than to consider restrictions of $M$ to Frobenius kernels $U_{(r)}$.
 For example, Proposition \ref{Ucrit} gives a necessary and sufficient condition for a rational $U$-module
 $M$ to be injective which is formulated in terms of the filtration $\{ M_{< d}, d > 0 \}$ of $M$.
  For non-abelian $U$, the sub-coalgebras $k[U]_{<p^r} \subset k[U]$ which we use to
 define this degree filtration are not well related to the coordinate algebras of infinitesimal kernels
 $U_{(r)}$; nevertheless, Proposition \ref{numerics} provides some comparison of $k[U]_{< p^r}$ and $k[U_{(r)}]$.  
 
 The key construction of this paper is that of the sub-coalgebras $(k[G])_{[d]} \subset k[G]$ in Definition 
 \ref{def:filt} for $G$ a linear algebraic group of exponential type.  For such $G$, we introduce in Definition \ref{ex-filt} 
 the filtration $\{ M_{[d]} \subset M, d \geq 0 \}$ for any rational $G$-module $M$.  
 As shown in Proposition \ref{prop:relate2}, this filtration is equivalent to that provided in Section \ref{sec:two} 
 in the special case $G = U_N, \ N \leq p$; in particular,
 equivalent to the elementary filtration considered for rational $\bG_a$-modules.  In a few examples
 of finite dimensional rational $G$-modules $M$, we find an explicit value for $d$ such that $M = M_{[d]}$.
 Theorem \ref{properties} provides a list of properties for the filtration $\{ M_{[d]}, \ d \geq 0 \}$ of a
 rational $G$ module $M$ with a structure of exponential type.  In particular, this is a filtration by
 rational $G$-submodules of $M$, satisfies various aspects of functoriality,
  and is independent of the structure of exponential type on $G$.
 The filtration is finite for finite dimensional rational modules and has expected functoriality properties.
 Proposition \ref{Gsupp} gives a relation of this filtration to the theory of support varieties for $G$ 
 as formulated in \cite{F15}.  
 
 In Proposition \ref{coMod}, we establish basic properties of  the functors $(-)_{[d]}$ determining our filtration of 
 rational $G$-modules.   Using some of these properties, we give in Proposition \ref{Gcrit}  a necessary and
 sufficient condition for $M$ to be a rationally injective $G$-module in terms of its filtration
 $\{ M_{[d]}, \ d \geq 0 \}$.   Much of Section \ref{sec:four} is devoted to formulating and then investigation
 the classes of ``mock injectives" and ``mock trivials", rational $G$-modules with interesting properties.
 Mock injectives are rational $G$-modules whose restrictions to all Frobenius kernels $G_{(r)}$ are
 injective $G_{(r)}$-modules.  Every mock injective is rationally injective, but somewhat surprisingly there
 are mock injectives which are not injective.  Mock trivials are rational $G$-modules which have various 
 triviality properties, most notably that the restriction of a mock trivial along any 1-parameter
 subgroup of $G$ is trivial.  Both these classes satisfy familiar closure properties.   
 
 Finally, we conclude in Proposition \ref{Groth} with a Grothendieck spectral sequence relating the 
right derived functors $R^t(-)_{d]}$ of the filtration functor $(-)_{d]}$ for a given degree $d$ with the rational cohomology 
of $G$ for any linear algebraic group $G$ of exponential type.

 We thank Jason Fulman, Julia Pevtsova, Paul Sobaje, and Andrei Suslin for conversations related to the contents
 of this paper.

\section{Rational modules for the additive group $\bG_a$ }
\label{sec:one}

We recall that $\bG_a$ (the additive group) has coordinate algebra $k[T]$ 
equipped with the coproduct 
$$\Delta: k[T] \ \to k[T] \otimes k[T], \quad T \mapsto (T\otimes 1 + 1 \otimes T).$$
In particular, this coproduct on $k[T]$ gives $k[T]$ the structure of a rational $\bG_a$-module
(which is rationally injective).
One can view that action as follows:  for every commutative $k$-algebra $R$ and
for every $a \in \bG_a(R) = R$, the action of $a$ on $f(T) \in R[T]$ is given by
$a \circ f(T) \ = \ f(a+T)$.  

The $r$-th Frobenius kernel $\bG_{a(r)}$ of $\bG_a$,
$$i_r: \bG_{a(r)} \ \subset \ \bG_a,$$
is the closed subgroup scheme with coordinate algebra given by 
$i_r^*: k[\bG_a] = k[T] \ \to \ k[T]/T^{p^r} = k[\bG_{a(r)}]$ and group algebra
(i.e., $k$-linear dual of $ k[\bG_{a(r)}]$) denoted by $k\bG_{a(r)}$.  Using the notation
of  \cite{SFB1}, we let
 $v_0,\ldots,v_{p^r-1}$ be the $k$-basis of $k\bG_{a(r)}$ dual to the standard basis 
$\{ T^j, 0 \leq j < p^r \}$ of $k[T]/T^{p^r}$.  Denote $v_{p^s}$ by $u_s$.  
If $j = \sum_{\ell =0}^{r-1} j_\ell p^\ell, \ 0 \leq j_\ell < p$, then 
$$v_j \ = \  \frac{u_0^{j_0} \cdots u_{r-1}^{j_{r-1}}}{j_0! \cdots j_{r-1}!}.$$

\begin{notation} (see \cite{SFB1})
With notation as above,
$$k\bG_{a(r)} \ \simeq \ k[u_0,\ldots,u_{r-1}]/(u_0^p,\ldots,u_{r-1}^p).$$
For any $r,s > 0$,  the quotient map
$$q_{r,s}: k[\bG_{a(r)}] \cong k[T]/T^{p^{r+s}} \to k[\bG_{a(r)}] \cong k[T]/T^{p^r}$$
sending $T$ to $T$ is a Hopf algebra map, whose dual we denote by 
$$i_{r,s}: k\bG_{a(r)} \to k\bG_{a(r+s)}, \quad u_i \mapsto u_i, \ i < r.$$
The colimit 
$$k\bG_a \ \equiv \ \varinjlim_r  k\bG_{a(r)} \ \simeq k[u_0,\ldots, u_n,\ldots]/(u_0^p,\ldots,u_n^p,\ldots),$$
is the group algebra (or hyperalgebra or algebra of distributions at the identity) of $\bG_a$.
\end{notation}

The following evident lemma makes explicit the action of $k\bG_a$ on a rational $G$-module $M$.

\begin{lemma}
Let $M$ be a rational $\bG_a$-module given by the comodule structure
$\Delta_M: M \to M \otimes k[\bG_a]$.  For $\phi \in k\bG_a$, 
\begin{equation}
\label{action}
\phi(m) \ = \ ((1\otimes \phi)\circ \Delta_M)(m).
\end{equation}
Consequently, the action of $v_j \in k\bG_a$ on the rational 
$\bG_a$-module $M$ is determined by the formula 
\begin{equation}
\label{eq:co}
\Delta_M(m) \ =  \ \sum_j v_j(m) \otimes T^j.
\end{equation}
In particular, the action of $v_j$ on $f(T) = \sum_{n\geq 0} a_nT^n \in k[\bG_a] \simeq k[T]$ is given by
\begin{equation}
\label{act-on-f}
 v_j(f(T))  \ = \ \sum_{n \geq j}  a_n {n \choose j} T^{n-j},
 \end{equation}
 since 
 $$\Delta_{k[\bG_a]}(T^n) \ = \ (T\otimes 1 + 1 \otimes T)^n = \ \sum_{j \geq 0} {n\choose j}T^{n-j} \otimes T^j.$$
\end{lemma}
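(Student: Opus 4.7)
The plan is to unwind the standard duality between the coalgebra $k[\bG_a]$ and the (topological) dual algebra $k\bG_a$. A rational $\bG_a$-module is by definition a $k[\bG_a]$-comodule $(M,\Delta_M)$, and the associated module structure over the hyperalgebra $k\bG_a = k[\bG_a]^\vee$ is the one obtained by pairing: an element $\phi \in k\bG_a$ acts on $m \in M$ by first applying $\Delta_M$ and then evaluating $\phi$ on the second tensor factor. This is exactly the content of formula (\ref{action}), so the first assertion is a direct consequence of conventions.

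For formula (\ref{eq:co}), I would expand $\Delta_M(m)$ in the $k$-basis $\{T^j : j \geq 0\}$ of $k[\bG_a]$. Since any individual element $\Delta_M(m)$ lies in $M \otimes k[\bG_a]$ and involves only finitely many basis vectors (rationality of the comodule structure ensures finiteness for each $m$), we may write uniquely
\[
\Delta_M(m) \ = \ \sum_j m_j \otimes T^j, \qquad m_j \in M,
\]
with $m_j = 0$ for $j$ large. Applying (\ref{action}) with $\phi = v_j$ and invoking the defining duality $v_j(T^i) = \delta_{ij}$ gives $v_j(m) = m_j$, which is precisely (\ref{eq:co}).

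For formula (\ref{act-on-f}), I would apply (\ref{eq:co}) in the case $M = k[\bG_a]$ and $m = T^n$. Because $\Delta_{k[\bG_a]}$ is an algebra map and $\Delta(T) = T \otimes 1 + 1 \otimes T$, the binomial theorem (valid in any commutative ring) yields
\[
\Delta_{k[\bG_a]}(T^n) \ = \ \sum_{j=0}^{n} \binom{n}{j} T^{n-j} \otimes T^j.
\]
Comparing coefficients of $1 \otimes T^j$ with the expression from (\ref{eq:co}) reads off $v_j(T^n) = \binom{n}{j} T^{n-j}$, and $k$-linearity in $f(T) = \sum_n a_n T^n$ produces (\ref{act-on-f}).

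No serious obstacle is anticipated, which is why the author calls the lemma "evident." The only point that deserves care is to ensure that all sums involved are finite in the relevant basis, so the pairing against $v_j$ is genuinely well-defined; this is guaranteed by rationality of $M$ as a $\bG_a$-module, which forces $\Delta_M(m)$ to lie in $M \otimes k[\bG_a]_{<d}$ for some $d = d(m)$.
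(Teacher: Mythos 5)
Your proposal is correct and matches the paper's (implicit) argument: the paper offers no proof beyond the final ``since'' clause giving $\Delta_{k[\bG_a]}(T^n)=\sum_j\binom{n}{j}T^{n-j}\otimes T^j$, and your unwinding of the comodule/hyperalgebra duality $v_j(T^i)=\delta_{ij}$ together with the finiteness of $\Delta_M(m)$ is exactly the intended reasoning. Nothing further is needed.
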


Using (\ref{eq:co}), we immediately identify those $k\bG_a$-modules which arise as  rational $\bG_a$-modules. 

\begin{prop}
\label{rat-cond}
Let $M$ be a $k\bG_a$-module satisfying the following condition: 
\begin{equation}
\label{cond-rat}
\forall \ m \in M, \ \exists \ \text {only finitely many} \ v_j  \  \text{acting  non-trivially on}  \ m.
\end{equation}
Then the $k\bG_a$-module structure on $M$ (i.e., the action of each $v_j \in k\bG_a$ on $M$)
 arises from the rational $\bG_a$-module structure 
$$M \ \to M \otimes k[\bG_a], \quad m \in M \mapsto \sum_j v_j(m) \otimes T^j.$$  
Conversely, any rational $\bG_a$-module satisfies condition (\ref{cond-rat}).
\end{prop}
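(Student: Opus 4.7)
\medskip

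The plan is to use the lemma to guide the construction of the candidate comodule structure, and then simply verify the two comodule axioms (coassociativity and counitality) by direct computation, exploiting the duality between multiplication in $k\bG_a$ and the coproduct on $k[T]$. For the forward direction, given a $k\bG_a$-module $M$ satisfying (\ref{cond-rat}), I would define
\[
\Delta_M : M \to M \otimes k[\bG_a], \qquad m \mapsto \sum_{j \geq 0} v_j(m) \otimes T^j.
\]
The finiteness hypothesis (\ref{cond-rat}) guarantees this sum lies in $M \otimes k[T]$ (and not in some completion). Since $v_0 \in k\bG_a$ is the multiplicative identity (as one sees from $\langle v_0 \cdot v_j, T^n\rangle = \binom{n}{0}\delta_{j,n} = \delta_{j,n}$), and since the counit $\epsilon \colon k[T] \to k$ satisfies $\epsilon(T^j) = \delta_{j,0}$, the counit axiom $(\id \otimes \epsilon) \circ \Delta_M = \id_M$ is immediate.

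The heart of the argument is coassociativity. Expanding both sides yields
\[
(\id \otimes \Delta) \circ \Delta_M(m) = \sum_j \sum_{k+\ell=j} \binom{j}{k} v_j(m) \otimes T^k \otimes T^\ell,
\]
\[
(\Delta_M \otimes \id) \circ \Delta_M(m) = \sum_{i,j} v_i(v_j(m)) \otimes T^i \otimes T^j,
\]
so the claim reduces to the identity $v_i \cdot v_j = \binom{i+j}{i} v_{i+j}$ in $k\bG_a$. This identity is precisely dual to $\Delta(T^n) = \sum_{k+\ell=n} \binom{n}{k} T^k \otimes T^\ell$: pairing $v_i \otimes v_j$ with $\Delta(T^n)$ gives $\binom{n}{i}$ exactly when $n = i+j$. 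Thus coassociativity of $\Delta_M$ is the dual of associativity of multiplication in $k[T]$.

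For the converse, suppose $M$ is a rational $\bG_a$-module with structure map $\Delta_M \colon M \to M \otimes k[\bG_a]$. For any $m \in M$ we may write $\Delta_M(m) = \sum_{j=0}^N w_j \otimes T^j$ as a \emph{finite} sum, with $N = N(m)$. Formula (\ref{action}) of the lemma then yields $v_j(m) = (\id \otimes v_j)\Delta_M(m) = w_j$ for $j \leq N$ and $v_j(m) = 0$ for $j > N$, which is exactly (\ref{cond-rat}).

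The main (minor) obstacle is the bookkeeping with binomial coefficients in the proof of coassociativity; everything else is formal once one recognizes that the hypothesis (\ref{cond-rat}) is precisely what allows one to reinterpret the pointwise $k\bG_a$-action as a genuine $k[\bG_a]$-comodule structure. The content of the proposition is really that $k\bG_a = \varinjlim_r k\bG_{a(r)}$ is the restricted dual of $k[\bG_a]$, and that comodules over a coalgebra correspond precisely to rational modules over its dual algebra.
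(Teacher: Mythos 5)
Your proof is correct, and it takes the same route the paper intends: the paper offers no written proof, asserting that the claim follows ``immediately'' from formula (\ref{eq:co}), and your argument simply makes that explicit. The key identity $v_i \cdot v_j = \binom{i+j}{i}\,v_{i+j}$ (dual to $\Delta(T^n)=\sum_{k+\ell=n}\binom{n}{k}T^k\otimes T^\ell$) is exactly the bookkeeping needed for coassociativity, and your use of (\ref{cond-rat}) to ensure $\Delta_M(m)$ lands in $M\otimes k[T]$ rather than a completion, together with the converse via (\ref{action}), is precisely the content the paper leaves to the reader.
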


We make explicit the following useful consequence of Proposition \ref{rat-cond}.

\begin{cor}
Let $M$ be a rational $\bG_a$-module and $S \subset M$ be a subset.  Then the 
rational $\bG_a$-submodule generated by $S$, $\langle \bG_a \cdot S \rangle$, 
is spanned by $\{ v_j(s); s\in S, j \geq 0 \}$.

In particular, the rational $\bG_a$-submodule  $\langle \bG_a \cdot  f(T)\rangle \ \subset \ k[\bG_a] \simeq k[T]$  
generated by $f(T)$ is
the subspace of $k[T]$ spanned by $\{ v_j(f(T)) \}$ as given in (\ref{act-on-f}).
\end{cor}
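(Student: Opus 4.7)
The plan is to introduce $N = \mathrm{Span}\{v_j(s) : s\in S, \ j\geq 0\} \subseteq M$ and argue that $N$ coincides with the rational $\bG_a$-submodule $\langle \bG_a \cdot S\rangle$. The inclusion $N \subseteq \langle \bG_a \cdot S\rangle$ is essentially formal: any rational $\bG_a$-submodule of $M$ is stable under every $v_j \in k\bG_a$ by formula (\ref{action}), and $\langle \bG_a \cdot S\rangle$ is the smallest such submodule containing $S$, so it must contain every $v_j(s)$.

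For the reverse inclusion, the approach is to verify directly that $N$ is already a rational $\bG_a$-submodule of $M$. Since $v_0$ acts as the identity, $S \subseteq N$, and the minimality of $\langle \bG_a \cdot S\rangle$ will then force $\langle \bG_a \cdot S\rangle \subseteq N$. To apply Proposition \ref{rat-cond} to $N$, two things must be checked. First, $N$ must be stable under the $k\bG_a$-action, for which it suffices to show $v_k(v_j(s)) \in N$. This reduces to a computation of the product $v_k \cdot v_j$ in the group algebra: dualizing the formula $\Delta(T^n) = \sum_i \binom{n}{i}\, T^{n-i}\otimes T^i$ already recorded in the lemma yields $v_k \cdot v_j = \binom{j+k}{k}\, v_{j+k}$, so $v_k(v_j(s)) = \binom{j+k}{k}\, v_{j+k}(s) \in N$. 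Second, the finiteness condition (\ref{cond-rat}) must hold on $N$; but each generator $v_j(s) \in M$ satisfies (\ref{cond-rat}) by Proposition \ref{rat-cond} applied to $M$, and finiteness is preserved under finite $k$-linear combinations.

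Once $N$ is certified as a rational $\bG_a$-submodule containing $S$, the first assertion follows. The ``In particular'' clause is then an immediate specialization: take $S = \{f(T)\}$ in $M = k[\bG_a]$ and invoke the explicit formula (\ref{act-on-f}) for $v_j(f(T))$.

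The only computational step that requires any care is the identity $v_k \cdot v_j = \binom{j+k}{k}\, v_{j+k}$; the rest is bookkeeping about what it means to be a rational submodule. I do not anticipate a serious obstacle, since the Corollary is in essence a rephrasing of Proposition \ref{rat-cond} in the language of generated submodules.
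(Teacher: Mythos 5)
Your proposal is correct and follows essentially the same route as the paper: the paper simply observes that the span of $\{v_j(s)\}$ is a $k\bG_a$-submodule and invokes Proposition \ref{rat-cond}, while you supply the details (the product formula $v_k\cdot v_j = \binom{j+k}{k}v_{j+k}$, the identity action of $v_0$, and the inheritance of condition (\ref{cond-rat}) by subspaces) that the paper leaves implicit. All of those verifications are accurate.
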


\begin{proof}
The span of $\{ v_j(s); s\in S, j \geq 0 \}$ is clearly a $k\bG_a$-submodule of $M$.
Thus, the corollary follows from Proposition \ref{rat-cond}.
\end{proof}

Using a theorem of E. Kummer \cite{Ku} (see also \cite {Ho}), we obtain the following 
explicit description of the rational $\bG_a$-submodule $\langle \bG_a \cdot T^n \rangle$.

\begin{prop}
\label{carries}
The rational $\bG_a$-submodule $\langle \bG_a \cdot T^n \rangle \ \subset k[T]$ has a 
$k$-basis consisting of  those $T^m$ such that adding $n-m$ to $m$ involves no carries
in base-$p$ arithmetic.   In other words, if we write $n$ in base-$p$ as $\sum_{i \geq 0}n_ip^i$
with $0 \leq n_i < p$ for all $i$, then $\langle \bG_a \cdot T^n \rangle$ is spanned by
those $T^m$ for which $m = \sum_{i \geq 0} m_ip^i$ with $m_i \leq n_i$.
\end{prop}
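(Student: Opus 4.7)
The plan is to reduce this to the previously established formula \eqref{act-on-f} and then invoke Kummer's theorem in a direct fashion. By the corollary immediately preceding the statement, the rational $\bG_a$-submodule $\langle \bG_a \cdot T^n \rangle$ is spanned by $\{v_j(T^n) : j \geq 0\}$. The formula \eqref{act-on-f} specializes (with $f(T) = T^n$, so $a_n = 1$ and all other $a_i = 0$) to
\[
v_j(T^n) \;=\; \binom{n}{j} T^{n-j} \quad \text{for } j \leq n,
\]
and $v_j(T^n) = 0$ for $j > n$. Hence the spanning set consists of the monomials $T^{n-j}$ with $0 \leq j \leq n$, each weighted by the image of $\binom{n}{j}$ in $k$. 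A monomial $T^m$ with $m \leq n$ therefore lies in $\langle \bG_a \cdot T^n\rangle$ precisely when $\binom{n}{n-m}$ is nonzero modulo $p$, and these monomials are linearly independent in $k[T]$, so they form a $k$-basis.

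Next I would invoke Kummer's theorem, which asserts that the $p$-adic valuation of $\binom{n}{j}$ equals the number of carries when adding $j$ and $n-j$ in base $p$. In particular, $\binom{n}{j} \not\equiv 0 \pmod p$ if and only if the addition $(n-j) + j = n$ involves no carries in base $p$. Setting $m = n-j$, this is exactly the condition stated in the proposition. To convert the ``no carries'' formulation into the digit-wise inequality formulation, I would simply note that if $n = \sum n_i p^i$ and $m = \sum m_i p^i$ with $0 \leq m_i, n_i < p$, then adding $m$ and $n-m$ yields $n$ without carries if and only if each digit $m_i$ of $m$ does not exceed the corresponding digit $n_i$ of $n$; otherwise a carry would be forced at some position. (Equivalently, one may appeal directly to Lucas' theorem: $\binom{n}{m} \equiv \prod_i \binom{n_i}{m_i} \pmod p$, which is nonzero exactly when $m_i \leq n_i$ for all $i$.)

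The only mildly non-trivial step is translating between the two equivalent combinatorial descriptions of ``$\binom{n}{m} \not\equiv 0 \pmod p$'', but this is a well-known and elementary consequence of either Kummer's or Lucas' theorem and should be dispatched in a line. No genuine obstacle arises: the work has essentially already been done by \eqref{act-on-f} together with the cited classical fact from number theory.
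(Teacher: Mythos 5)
Your proof is correct and follows essentially the same route as the paper: reduce via the preceding corollary and formula (\ref{act-on-f}) to the nonvanishing of $\binom{n}{j}$ mod $p$, then apply Kummer's theorem. The extra detail you supply on translating the carry condition into the digit-wise inequality (or via Lucas' theorem) is a harmless elaboration of what the paper leaves implicit.
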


\begin{proof}
By (\ref{act-on-f}), $v_j(T^n)$ is a non-zero multiple of $T^{n-j}$ if and only if $p$ does not 
divide ${n \choose j}$.  Kummer's theorem asserts that the maximal $p$-th
power dividing ${n \choose j}$ equals the number of carries in base-$p$ arithmetic arising in
adding $j$ to $n-j$.
\end{proof}

\begin{ex}
We can easily construct  many non-isomorphic rational $\bG_a$-module structures on 
the underlying vector space of $k[T]$.
Namely, for each $i \geq 0$, choose $n_i \geq 0$ with $\varinjlim_i n_i = \infty$ and choose
$g_i(\ul u) \in k\bG_a \simeq k[u_0,u_1,\ldots]/(u_0^p,u_1^p, \ldots)$ 
such that $g_i(\ul u)$ is a polynomial in the $u_j$'s with $j \geq n_i$.   The 
$k\bG_a $-module structure on $k[T]$ given by setting the action of $u_i$ on $k[T]$ to 
be that of $g_i(\ul u)$ on $k[\bG_a] \simeq k[T]$
with its structure arising from the coproduct of $\bG_a$ is a rational $\bG_a$-module 
by Proposition \ref{rat-cond} (since only finite many $g_i(\ul u)$'s act non-trivially on a given $T^n$
by (\ref{act-on-f}) ).
\end{ex}

\begin{remark}
Different choices of $g_i(\ul u)$ in the preceding example can lead to isomorphic rational $\bG_a$-modules.
For example, let $\theta: \bN \to \bN$ be a bijection and set $g_i(\ul u) = u_{\theta(i)}$.  The resulting
module structure on $k[T]$ is isomorphic to that of $k[\bG_a]$  via the $k$-linear isomorphism 
$\Theta_\Sigma: k[T] \to k[T]$ sending the monomial $\prod_{i>0}(T^{p^i})^{d_i}$ to $\prod_{i>0}(T^{p^{\theta(i)})^{d_i}}$
where $0 \leq d_i < p-1$
\end{remark}

The following elementary proposition justifies the functor $(-)_{<d}$ of Definition \ref{endo}.

\begin{prop}
\label{prop:phi}
For any rational $\bG_a$-module $M$ and any $\phi \in k\bG_a$,
$$M_\phi \ \equiv \ \{ m \in M: \phi(m) = 0 \} \ \subset \ M$$
is a rational $\bG_a$-submodule of $M$.   
Moreover, if $f: M \to N$ is a map
of rational $\bG_a$-modules, then $f$ restricts to $M_\phi \to N_\phi$.
\end{prop}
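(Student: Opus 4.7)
The plan rests on two key observations: first, $\bG_a$ is abelian, so its measure algebra $k\bG_a$ is commutative, whence the action of any $\phi \in k\bG_a$ commutes with the action of all of $k\bG_a$; second, the rationality criterion of Proposition \ref{rat-cond} is manifestly inherited by $k\bG_a$-submodules.

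First, to see that $M_\phi \subset M$ is closed under the $k\bG_a$-action: given $m \in M_\phi$ and any $\psi \in k\bG_a$, commutativity of $k\bG_a$ gives
$$\phi(\psi(m)) \ = \ (\phi \psi)(m) \ = \ (\psi \phi)(m) \ = \ \psi(\phi(m)) \ = \ \psi(0) \ = \ 0,$$
so $\psi(m) \in M_\phi$. Thus $M_\phi$ is a $k\bG_a$-submodule of $M$.

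Next, I would verify that the $k\bG_a$-module $M_\phi$ satisfies the finiteness condition (\ref{cond-rat}) of Proposition \ref{rat-cond}. For any $m \in M_\phi \subset M$, the rationality of $M$ implies that only finitely many of the basis elements $v_j \in k\bG_a$ act nontrivially on $m$; this same finiteness property is inherited by the submodule $M_\phi$. By Proposition \ref{rat-cond}, $M_\phi$ therefore carries the structure of a rational $\bG_a$-module, and since its comodule structure is given by the same formula $m \mapsto \sum_j v_j(m) \otimes T^j$ as the ambient $M$, the inclusion $M_\phi \hookrightarrow M$ is a map of rational $\bG_a$-modules.

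Finally, for the functoriality assertion, any map $f : M \to N$ of rational $\bG_a$-modules is in particular a $k\bG_a$-module homomorphism, so for $m \in M_\phi$ we have $\phi(f(m)) = f(\phi(m)) = f(0) = 0$, whence $f(m) \in N_\phi$. There is no serious obstacle here; the only point that deserves explicit mention is the commutativity of $k\bG_a$, which is what makes the \emph{left} kernel of $\phi$ a submodule (in the non-abelian setting one would instead need $\phi$ to be central or to lie in a two-sided structure).
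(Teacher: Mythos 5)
Your proof is correct and follows essentially the same route as the paper: the key point in both is that commutativity of $k\bG_a$ makes the kernel of $\phi$ stable under the whole $k\bG_a$-action, after which Proposition \ref{rat-cond} supplies the rational structure, and functoriality follows because $f$ commutes with $\phi$. You spell out the inheritance of the finiteness condition (\ref{cond-rat}) more explicitly than the paper does, which is a harmless elaboration rather than a different argument.
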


\begin{proof}
To show that $M_\phi \subset M$ is a rational $\bG_a$-submodule it suffices
by Proposition \ref{rat-cond} to show that $\psi \cdot M_\phi \subset M_\phi$ for
any $\psi \in k\bG_a$.  This follows immediately from the commutativity of $\bG_a$
(implying the commutativity of $k\bG_a$).

The second assertion concerning a map $f: M \to N$ of rational $\bG_a$-modules follows
from the fact that $f$ necessarily commutes with the action of $\phi$.
\end{proof}

Proposition \ref{prop:phi} enables the formulation of many natural filtrations on $(\bG_a-Mod)$.
The motivation for considering the following is given by Proposition \ref{Ga-retract}.
 
\begin{defn}
\label{endo}
For any $d \geq 1$, we define the idempotent endo-functor
$$(-)_{< d}: (G{\text -}Mod) \ \to \ (G{\text -}Mod), \quad
M \ \mapsto \ M_{< d} \equiv  \{ m\in M: v_j(m) = 0, j \geq d \}.$$
In other words, $m \in M_{ <d}$ if and only if $\Delta(m) \in M\otimes k[T]_{<d}$.

For any rational $\bG_a$-module $M$, we consider the {\it degree filtration}
$$M_{< 1} \ \subset M_{<2 } \ \subset  M_{<3}  \ \subset \ \cdots  \ \subset  M$$ 
of $M$ by rational $\bG_a$-submodules.
\end{defn}

The following proposition, established in \cite{F15},  follows easily from the 
observation that the coproduct $\Delta_M: M \to M\otimes k[\bG_a]$ defining the
rational $\bG_a$-module structure on $M$ sends a finite dimensional subspace of
$M$ to a finite dimensional subspace of $M \to M\otimes k[\bG_a]$.

\begin{prop} \cite[2.6]{F15}
Each finite dimensional rational $\bG_a$-module lies in the image of $(-)_{< d}$
for $d$ sufficiently large.  Consequently, 
\begin{enumerate}
\item For any rational $\bG_a$-module $M$, \ $M = \cup_d M_{< d}$.
\item  If $M$ is finite dimensional, then $M \ = \ M_{<d}$ for $d >>0$.
\end{enumerate}
\end{prop}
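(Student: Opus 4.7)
The plan is to exploit local finiteness of rational comodules together with the alternative characterization of $M_{<d}$ given in Definition \ref{endo}, namely that $m \in M_{<d}$ iff $\Delta_M(m) \in M \otimes k[T]_{<d}$, where $k[T]_{<d}$ denotes the span of $1, T, \dots, T^{d-1}$. This characterization is the crucial one, since it converts the desired conclusion into a statement about where the image of the coproduct lands, which is much easier to control than the individual vanishing conditions $v_j(m) = 0$.

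First I would handle the main claim for $M$ finite dimensional. Picking a basis $m_1, \dots, m_n$ of $M$, each $\Delta_M(m_i) = \sum_j v_j(m_i) \otimes T^j$ is a \emph{finite} sum by the rationality condition (\ref{cond-rat}) of Proposition \ref{rat-cond}. Hence there exists a single $d$, depending only on the (finitely many) nonzero terms appearing among the $\Delta_M(m_i)$, with $\Delta_M(m_i) \in M \otimes k[T]_{<d}$ for every $i$. By $k$-linearity of $\Delta_M$, this forces $\Delta_M(M) \subset M \otimes k[T]_{<d}$, so the second characterization in Definition \ref{endo} gives $M = M_{<d}$.

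For consequence (1), I would use that any rational $\bG_a$-module is the union of its finite-dimensional rational submodules. This is again immediate from Proposition \ref{rat-cond}: given $m \in M$, only finitely many $v_j$ act nontrivially on $m$, so the $k$-span $N$ of the set $\{v_j(m) : j \geq 0\}$ is finite dimensional, and it is a rational $\bG_a$-submodule of $M$ by the preceding corollary. Applying the finite-dimensional case to $N$ produces $d$ with $N = N_{<d}$, and functoriality of $(-)_{<d}$ (Proposition \ref{prop:phi}) gives $N_{<d} \subset M_{<d}$, so $m \in M_{<d}$. Hence $M = \bigcup_d M_{<d}$. Consequence (2) is simply the main statement reread.

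There is no substantial obstacle: the entire argument rests on the observation that the coproduct sends a finite-dimensional subspace into a finite-dimensional piece of $M \otimes k[T]$, which is formal once the alternative characterization of $M_{<d}$ is in hand. The only point requiring a brief justification is the passage from pointwise local finiteness to existence of a uniform $d$ on a finite-dimensional $M$, and this is handled by choosing a basis.
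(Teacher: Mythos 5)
Your argument is correct and is essentially the paper's own: the paper defers to \cite[2.6]{F15} but records exactly your key observation, that the coproduct carries a finite-dimensional subspace of $M$ into $M\otimes k[T]_{<d}$ for some finite $d$, and then deduces (1) by writing $M$ as a union of finite-dimensional rational submodules. Your write-up just makes the uniform choice of $d$ and the reduction to finitely generated submodules explicit via condition (\ref{cond-rat}) and the corollary to Proposition \ref{rat-cond}.
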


Unlike for other linear groups considered in later sections, the filtration on 
the coordinate algebra $[\bG_a] = k[T]$ of $\bG_a$ can be viewed as a coalgebra splitting
of restriction maps $k[\bG_a] \ \to \ k[\bG_{(r)}]$ as observed in the next proposition.

\begin{prop}
\label{Ga-retract}
For each $d > 0$, the rational submodule $j_d: k[T]_{< d} \subset k[T]$ is a
sub-coalgebra.  

Moreover,
$$pr_{p^r} \circ j_d: k[T]_{< p^r} \subset k[T] \to k[T]/T^{p^r}, \ r > 0$$
is an isomorphism of coalgebras.
\end{prop}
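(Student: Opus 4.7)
The plan is first to identify $k[T]_{<d}$ concretely as a subspace of $k[T]$: I claim it is exactly the subspace spanned by $\{T^m : 0 \leq m < d\}$, i.e.\ the usual polynomials of degree strictly less than $d$. To prove this, I would use (\ref{act-on-f}), which gives $v_j(T^n) = \binom{n}{j} T^{n-j}$. Given $f = \sum_n a_n T^n$, the equation $v_j(f) = \sum_{n \geq j} a_n\binom{n}{j} T^{n-j} = 0$ forces $a_n\binom{n}{j} \equiv 0 \bmod p$ for every $n \geq j$, since the distinct terms $T^{n-j}$ (for fixed $j$, varying $n$) are linearly independent. Therefore $T^n$ may appear in a member of $k[T]_{<d}$ precisely when $\binom{n}{j} \equiv 0 \bmod p$ for every $j \geq d$; taking $j = n$ in the case $n \geq d$ rules this out (since $\binom{n}{n} = 1$), while for $n < d$ every $j \geq d$ satisfies $j > n$ so $\binom{n}{j} = 0$ trivially. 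This gives the asserted basis.

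With this identification in hand, the sub-coalgebra assertion is a direct calculation. For $n < d$,
\begin{equation*}
\Delta(T^n) \;=\; (T\otimes 1 + 1\otimes T)^n \;=\; \sum_{j=0}^{n} \binom{n}{j} T^{n-j}\otimes T^j,
\end{equation*}
and every summand has $n - j < d$ and $j \leq n < d$, so $\Delta(T^n) \in k[T]_{<d}\otimes k[T]_{<d}$. The counit $\epsilon : k[T] \to k$ sending $T \mapsto 0$ restricts without issue to $k[T]_{<d}$. Hence $j_d$ is a coalgebra embedding.

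For the second assertion, write $\pi := pr_{p^r} \circ j_{p^r}$. On the basis $\{T^m : m < p^r\}$ of $k[T]_{<p^r}$, the map $\pi$ sends $T^m$ to its class $\overline{T^m}$ in $k[T]/T^{p^r}$, which is evidently a $k$-linear isomorphism (the quotient has the same basis). It remains to show $\pi$ intertwines the two coproducts. Both sides are computed by the same formula $\Delta(T^n) = \sum_{j=0}^n \binom{n}{j} T^{n-j}\otimes T^j$ (the quotient receiving the inherited coalgebra structure), and for $n < p^r$ no summand is killed in the quotient (since $n - j$ and $j$ are both $< p^r$), so $(\pi\otimes\pi)\circ \Delta_{k[T]_{<p^r}} = \Delta_{k[T]/T^{p^r}}\circ \pi$.

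I expect no real obstacle; the only subtlety is the first paragraph, where one must be careful that $k[T]_{<d}$, defined as the joint kernel of operators $v_j$ for $j \geq d$, has no unexpected elements arising from cancellation among summands of different degrees. The linear independence argument using $v_j(T^n) = \binom{n}{j}T^{n-j}$ handles this cleanly, after which both claims reduce to straightforward verification on the monomial basis.
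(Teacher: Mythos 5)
Your proof is correct and follows essentially the same route as the paper: both arguments rest on the binomial expansion $\Delta(T^n)=\sum_j \binom{n}{j}T^{n-j}\otimes T^j$ to get the sub-coalgebra property, and on the observation that $pr_{p^r}\circ j_{p^r}$ matches up the monomial bases to get the isomorphism. Your first paragraph, identifying $k[T]_{<d}$ with the span of $\{T^m : m<d\}$ via the operators $v_j$, is a careful justification of something the paper treats as immediate from Definition \ref{endo}, and is a worthwhile (correct) addition rather than a divergence.
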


\begin{proof}
The fact that $j_d$ is an embedding of coalgebras follows from the form
of the coproduct $\Delta: k[T] \to K[T] \otimes k[T]$ which sends $T^n$
to $(T\otimes 1 + 1 \otimes T)^n$; thus $\Delta$ applied to a polynomial $f(T)$
of degree $< d$ is mapped to $\sum_i g_i \otimes h_i$ with the degree of 
each $g_i$ and each $h_i$ $< d$.

The fact that $pr_d \circ j_d$  is injective (and thus an isomorphism by dimension
considerations) is evident by inspection.  For $d = p^r$, one easily checks that
the coalgebra structure on $k[T]$ induces a coalgebra structure on $k[T]/T^{p^r}$.
\end{proof}

We summarize some of the relationships between various functors on rational $\bG_a$-modules.
We denote the abelian category of such rational modules either by $(\bG_a\mbox{-}Mod)$ or by
$(k[\bG_a]\mbox{-}coMod)$; we denote the category of rational modules for the infinitesimal group scheme
$\bG_{a(r)}$ either by $(\bG_{a(r)}\mbox{-}Mod)$ or by $(k[\bG_{a(r)}]\mbox{-}coMod)$.

We denote by 
$$\rho_r: (\bG_a{\text -}Mod) \  \to \ (\bG_{a(r)}\mbox{-}Mod)$$
the restriction functor sending a rational $\bG_a$-module $M$ with coporoduct $M \to M\otimes k[\bG_a]$
to the comodule for $k[\bG_{a(r)}]$ with coproduct defined by composition with the projection
$pr_{p^r}: k[\bG_a] \to k[\bG_{a(r)}]$.

\begin{prop}
\label{prop-Ga}
Consider the full subcategory $\iota_d: (k[\bG_a]_{<d}{\text -}coMod)\ \subset \ 
(k[\bG_a]{\text -}coMod)$ of rational $\bG_a$-modules
$M$ whose coproduct is of the form $M \to M \otimes k[\bG_a]_{<d}$.
\begin{enumerate}
\item
The image of $\iota_d$ consists of rational $\bG_a$-modules $M$ such that  $M = M_{< d}$. 
\item
For any $d > 0$, $\iota_d$ is left adjoint to functor $(k[\bG_a]{\text -}coMod) \to (k[\bG_a]_{<d}{\text -}coMod)$
given by $(-)_{<d}$.
\item
For any $r > 0$, the composition
$$\rho_r \circ \iota_{<p^r}: (\bG_a{\text -}Mod)_{<p^r} \ \stackrel{\sim}{\to} \ Mod(\bG_{a(r)})$$
is an equivalence of categories.
\end{enumerate}
\end{prop}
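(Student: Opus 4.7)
The plan is to treat the three parts in order, with each part following essentially by unpacking definitions and invoking the preceding results, most critically Proposition \ref{Ga-retract} for part (3).

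For (1), I would simply combine Definition \ref{endo} with formula (\ref{eq:co}). An object of $(k[\bG_a]_{<d}\text{-}coMod)$ is a rational $\bG_a$-module $M$ whose coproduct factors as $\Delta_M: M \to M \otimes k[\bG_a]_{<d}$. By (\ref{eq:co}), $\Delta_M(m) = \sum_j v_j(m) \otimes T^j$, so this factorization is equivalent to the vanishing of $v_j(m)$ for all $m \in M$ and all $j \geq d$, which by Definition \ref{endo} is precisely the assertion that $M = M_{<d}$.

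For (2), since $\iota_d$ is the inclusion of a full subcategory, the adjunction amounts to showing that for any $N$ with $N = N_{<d}$ and any rational $\bG_a$-module $M$, every $\bG_a$-module map $f: \iota_d N \to M$ factors uniquely through the submodule $M_{<d} \subset M$. Uniqueness is automatic because $M_{<d} \hookrightarrow M$ is a mono. For existence, Proposition \ref{prop:phi} applied to each $\phi = v_j$ with $j \geq d$ shows that $f$ sends $N_{v_j} = N$ into $M_{v_j}$; intersecting over all $j \geq d$, $f$ lands in $\bigcap_{j \geq d} M_{v_j} = M_{<d}$. The unit and counit are then the identity on $N$ and the inclusion $\iota_d M_{<d} \hookrightarrow M$, respectively, and the triangle identities are immediate.

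For (3), the key observation is that by Proposition \ref{Ga-retract} the composite coalgebra map
\begin{equation*}
k[\bG_a]_{<p^r} \ \xrightarrow{\ j_{p^r}\ } \ k[\bG_a] \ \xrightarrow{\ pr_{p^r}\ } \ k[\bG_{a(r)}]
\end{equation*}
is an isomorphism of coalgebras. Unwinding the definitions, $\rho_r \circ \iota_{<p^r}$ takes a comodule $(M, \Delta_M : M \to M \otimes k[\bG_a]_{<p^r})$ to the comodule $(M, (1 \otimes pr_{p^r}) \circ j_{p^r}^{out}\circ \Delta_M)$ for $k[\bG_{a(r)}]$, where $j_{p^r}^{out}$ denotes post-composition with $j_{p^r}$; that is, the new coproduct is obtained by post-composing $\Delta_M$ with the coalgebra isomorphism above. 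Since comodule categories are functorial isomorphism invariants of the underlying coalgebra, this is an equivalence (indeed an isomorphism) of categories, with an explicit inverse supplied by post-composing with the inverse coalgebra isomorphism.

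The only step that requires any thought is part (2), and even there the work has already been done by Proposition \ref{prop:phi}; part (3) relies wholly on Proposition \ref{Ga-retract}, which in turn depends on the special feature — absent for most groups — that the restriction $k[\bG_a] \twoheadrightarrow k[\bG_{a(r)}]$ splits as a coalgebra map. There is no real obstacle, so I would expect the proof to be quite short.
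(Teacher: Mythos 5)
Your proposal is correct and follows the same route as the paper: part (1) by unwinding Definition \ref{endo} via the formula $\Delta_M(m)=\sum_j v_j(m)\otimes T^j$, part (2) by the factorization argument through $M_\phi$ supplied by Proposition \ref{prop:phi}, and part (3) by transporting comodule structures along the coalgebra isomorphism $pr_{p^r}\circ j_{p^r}$ of Proposition \ref{Ga-retract}. The paper's proof is just a terser version of the same argument.
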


\begin{proof}
The first statement is essentially a tautology. 
The fact that $\iota_d$ is left adjoint to $(-)_{<d}$ follows from the observation that if
$f: M \to N$ is a map of $k\bG_a$-modules and if $\phi \in k\bG_a$ vanishes on $M$,
then $f$ factors (uniquely) through $N_\phi \subset N$.

The last statement is a consequence of the isomorphism 
$pr_{p^r} \circ j_{p^r}: k[T]_{< p^r} \stackrel{\sim}{\to} k[T]/T^{p^r}$ of 
Proposition \ref{Ga-retract}.  Namely, viewing $\rho_r $ and $\iota_{<p^r}$ as functors 
on categories of comodules, $\iota_{< p^r}$ is determined on comodules
by composing with $j_{p^r}$ and $\rho_{p^r}$ is determined by composing with $pr_{p^r}$.
\end{proof}


\section{Rational modules for unipotent groups}
\label{sec:two}

Let $U_N$ denote the unipotent radical of the standard (upper triangular)
Borel subgroup of the general linear group $GL_N$.  Then $k[U_N]$ is a polynomial
algebra on the strictly upper triangular coordinate functions $\{ x_{i,j}; \ 1 \leq i < j \leq N \}$.
We equip $k[U_N]$   with the grading determined by
setting the degree of each $x_{i,j}$ equal to 1.  A closed embedding $U \subset U_N$
of linear algebraic groups is said to be {\it linear} if the ideal $I_U \subset k[U_N]$ defining 
$U \subset U_N$ is generated by functions $f(x_{i,j})$ of degree 1.  This implies that the 
maximal ideal at the identity of $U$,  $\frak m_U \subset k[U]$, is generated by $m$
elements where $m = dim(U)$, so that $k[U]$ can be identified with the symmetric algebra
$S^\bu(\frak m/\frak m_U^2)$.

For any $d > 0$, we set $k[U_N]_{<d} \subset 
k[U_N]$ equal to the subspace of polynomials (functions on $U_N$) of degree $< d$.

\begin{prop}
\label{unip}
Let $i: U \to U_N$ be a closed embedding of linear algebraic groups.  Set \
$k[U]_{< d} \ \subset \ k[U]$ \ equal to the image under $i^*$
of \ $k[U_N]_{ < d} \ \subset \ k[U_N]$.   
The map of Hopf algebras $i^*: k[U_N] \to k[U]$ induces for each $d >0$ a map of coalgebras
$$k[U_N]_{< d} \to k[U]_{< d}.$$
\end{prop}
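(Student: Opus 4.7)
The plan is to reduce the proposition to a single structural observation about the coproduct on $k[U_N]$, after which the statement about $k[U]_{<d}$ and the map $k[U_N]_{<d} \to k[U]_{<d}$ is a formal consequence of $i^*$ being a coalgebra map. The central claim to prove is that $k[U_N]_{<d} \subset k[U_N]$ is itself a sub-coalgebra for every $d > 0$.

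For the central claim, I would begin by recording the coproduct formula for the matrix entries. Since the group law on $U_N$ is matrix multiplication, the coordinate function $x_{i,j}$ satisfies
$$\Delta(x_{i,j}) \ = \ x_{i,j} \otimes 1 \ + \ 1 \otimes x_{i,j} \ + \ \sum_{i < k < j} x_{i,k} \otimes x_{k,j}.$$
The key feature is that in every term on the right-hand side, each tensor factor has degree at most $1$ in the generating grading on $k[U_N]$; in particular $\Delta(x_{i,j}) \in k[U_N]_{<2} \otimes k[U_N]_{<2}$.

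Next, I would extend this observation multiplicatively. Since $\Delta \colon k[U_N] \to k[U_N] \otimes k[U_N]$ is an algebra homomorphism, for a monomial $x_{i_1,j_1} \cdots x_{i_n,j_n}$ of degree $n$ the coproduct factors as $\prod_m \Delta(x_{i_m,j_m})$; expanding this product of sums yields a sum of pure tensors $g \otimes h$ each with $\deg(g), \deg(h) \leq n$. Hence $\Delta$ sends any $f \in k[U_N]_{<d}$ into $k[U_N]_{<d} \otimes k[U_N]_{<d}$, so $k[U_N]_{<d}$ is a sub-coalgebra. To finish, I would invoke that $i^* \colon k[U_N] \to k[U]$ is a map of Hopf algebras, hence in particular of coalgebras. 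The image of a sub-coalgebra under a coalgebra map is again a sub-coalgebra, so $k[U]_{<d} := i^*(k[U_N]_{<d})$ is a sub-coalgebra of $k[U]$, and the corestriction $k[U_N]_{<d} \to k[U]_{<d}$ of $i^*$ is automatically a map of coalgebras.

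I do not anticipate any serious obstacle. The only subtlety worth flagging is that the cross-terms $x_{i,k} \otimes x_{k,j}$ in $\Delta(x_{i,j})$ sit in bidegree $(1,1)$ rather than any higher bidegree; this compatibility between the non-primitive part of the coproduct and the multiplicative degree filtration is exactly what makes $\{k[U_N]_{<d}\}_{d>0}$ a filtration by sub-coalgebras despite $k[U_N]$ being far from cocommutative and despite the $x_{i,j}$ being far from primitive.
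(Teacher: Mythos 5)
Your proposal is correct and follows essentially the same route as the paper: compute $\Delta(x_{i,j})$ explicitly, use that $\Delta$ is an algebra map to see each tensor factor of $\Delta(f)$ has degree bounded by that of $f$ (so $k[U_N]_{<d}$ is a sub-coalgebra), then transport this along the Hopf algebra map $i^*$. The only cosmetic difference is that you invoke ``the image of a sub-coalgebra under a coalgebra map is a sub-coalgebra'' where the paper performs the equivalent diagram chase on the commutative square induced by $i^*$.
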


\begin{proof}
The coproduct $\Delta_{U_N}: k[U_N] \to k[U_N] \otimes k[U_N]$  of the Hopf algebra $k[U_N]$ is
a map of algebras, determined by 
$$\Delta_{U_N}(x_{i,j}) \ = \ (x_{i,j}\otimes 1)+(\sum_{t; i < t < j} x_{i,t} \otimes x_{t,j})+(1\otimes x_{i,j}).
$$
In particular, if $f \in k[U_N]$ has degree $< d$ and if 
$\Delta_{U_N}(f) = \sum_i f_i^\prime \otimes f_i^{\prime \prime}$, then each $f_i^\prime$ and each
$f_i^{\prime \prime}$ also has degree $< d$.

Because $i^*$ is a map of Hopf algebras, $i^*$ determines a commutative square of algebras
\begin{equation}
\label{hopf}
\xymatrix{k[U_N] \ar[r]^-{\Delta_{U_N}} \ar[d]_{i^*} & k[U_N] \otimes k[U_N]  \ar[d]^{i^* \otimes i^*}\\
k[U] \ar[r]_-{\Delta_U} & k[U] \otimes k[U] } .
\end{equation}
A simple diagram chase implies that (\ref{hopf}) restricts to a commutative square
\begin{equation}
\xymatrix{k[U_N]_{<d} \ar[r]^-{\Delta_{U_N}} \ar[d]_{i^*} & k[U_N]_{<d}  \otimes k[U_N]_{<d}   \ar[d]^{i^* \otimes i^*}\\
k[U]_{<d}  \ar[r]_-{\Delta_U} & k[U] _{<d} \otimes k[U]_{<d} } .
\end{equation}
\end{proof}

In particular, each subspace $ k[U]_{< d} \ \subset \ k[U]$ is a rational $U$-module with
coaction $k[U]_{< d}  \ \to \ k[U]_{< d}  \otimes k[U]$ given by the coalgebra structure on $k[U]_{< d} $.

\begin{defn}
\label{Ufilt}
Let $U$ be a linear algebraic group provided with a closed embedding $i: U \to U_N$ for some $N$.
For any rational $U$-module $M$ and any $d > 0$, we define 
\begin{equation}
\label{Md}
M_{< d} \ \equiv \ \{ m \in M: \Delta_M(m) \in M \otimes k[U]_{< d} \}.
\end{equation}
The {\it degree filtration} on $M$ is the filtration 
$$M_{< 1} \ \subset \ M_{< 2} \ \subset M_{< 3} \ \subset \cdots M.$$
If $M = M_{<d}$, then we say that $M$ has filtration degree $< d$.
\end{defn}

The following proposition will prove useful at many points; in particular, it implies that the degree
filtration of (\ref{Md}) is a filtration by rational $\bG_a$-modules.

\begin{prop}
\label{coideal}
Let $C$ be a coalgebra over $k$ and $i: B \subset C$ a right coideal (i.e., $\Delta_C: C \to C\otimes C$
restricts to $\Delta_B: B \to B\otimes C$).  For any right $C$-comodule $M$ (i.e., $\Delta_M: M \to M\otimes C$),
the subspace 
$$M^\prime \ \equiv \ \Delta^{-1}_M(M \otimes B) \ \subset \ M$$
is a right $C$-subcomodule of $M$.  Moreover, if $i: B \ \subset \ C$ is a sub-coalgebra,
then $M^\prime$ is a right $B$-comodule.

In particular, let $G$ be a linear algebraic group and let $B \subset k[G]$ be a right co-ideal (i.e., a rational
$G$-submodule of $k[G]$).  Then for any rational $G$-module $M$,
the subspace  $M^\prime \equiv \Delta^{-1}_M(M \otimes B) \subset M$ is a rational $G$-submodule.
\end{prop}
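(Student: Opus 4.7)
The plan is to prove the first assertion via coassociativity applied to the comodule structure map $\Delta_M$; the remaining claims then follow with little extra work.

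First, I would fix a $k$-basis $\{b_\alpha\}_{\alpha \in I}$ of $B$ and extend it to a basis of $C$ by adjoining vectors $\{c_\beta\}_{\beta \in J}$. For $m \in M^\prime$, the condition $\Delta_M(m) \in M \otimes B$ lets us write $\Delta_M(m) = \sum_\alpha m_\alpha \otimes b_\alpha$ uniquely, with only finitely many nonzero $m_\alpha \in M$ (no $c_\beta$ terms appearing). The goal is to show each $m_\alpha \in M^\prime$, which yields $\Delta_M(m) \in M^\prime \otimes B \subset M^\prime \otimes C$ and so proves that $M^\prime$ is a $C$-subcomodule of $M$.

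The main computation is coassociativity, $(\Delta_M \otimes \id_C) \circ \Delta_M = (\id_M \otimes \Delta_C) \circ \Delta_M$, applied to $m$. The left side evaluates to $\sum_\alpha \Delta_M(m_\alpha) \otimes b_\alpha$, while the right side evaluates to $\sum_\alpha m_\alpha \otimes \Delta_C(b_\alpha)$ and lies in $M \otimes B \otimes C$ because $B$ is a right coideal. Therefore $\sum_\alpha \Delta_M(m_\alpha) \otimes b_\alpha \in M \otimes B \otimes C$. Expanding each $\Delta_M(m_\alpha)$ in the chosen basis of $C$ and using linear independence of $\{b_\alpha\}$ in $C$ (equivalently, applying $\id_M \otimes \pi \otimes \id_C$ with $\pi: C \to C/B$ the quotient), one extracts $\Delta_M(m_\alpha) \in M \otimes B$ for every $\alpha$, i.e., $m_\alpha \in M^\prime$.

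For the \emph{moreover} statement, when $B \subset C$ is a sub-coalgebra, $\Delta_C$ restricts to $\Delta_B: B \to B \otimes B$ making $B$ itself a coalgebra, and the restricted map $\Delta_M|_{M^\prime}: M^\prime \to M^\prime \otimes B$ satisfies the counit and coassociativity axioms of a $B$-comodule inherited from those for $M$ as a $C$-comodule. The \emph{in particular} clause is the specialization $C = k[G]$, in which right coideals of $k[G]$ are by definition the rational $G$-submodules. The only real obstacle is the bookkeeping in the last step of the main computation, namely verifying that expanding in a basis extending one for $B$ correctly extracts $\Delta_M(m_\alpha) \in M \otimes B$ from the relation in $M \otimes B \otimes C$; this is routine linear algebra once the basis is chosen, so no genuine difficulty is anticipated.
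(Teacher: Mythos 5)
Your proof is correct. The one step that needs care --- extracting $\Delta_M(m_\alpha)\in M\otimes B$ from the relation $\sum_\alpha \Delta_M(m_\alpha)\otimes b_\alpha\in M\otimes B\otimes C$ --- is handled properly: applying $\id_M\otimes\pi\otimes\id_C$ with $\pi:C\to C/B$ and using linear independence of the $b_\alpha$ (together with exactness of $-\otimes_k V$) does give $(\id_M\otimes\pi)\Delta_M(m_\alpha)=0$ for each $\alpha$, hence $m_\alpha\in M^\prime$ and $\Delta_M(M^\prime)\subset M^\prime\otimes B\subset M^\prime\otimes C$. The paper reaches the same conclusion from the same two inputs (coassociativity and the right coideal condition) but packages them categorically: it observes that $\Delta_M$ is a map of right $C$-comodules when $M\otimes C$ carries the coaction $m\otimes c\mapsto m\otimes\Delta_C(c)$, that $M\otimes B$ is then a subcomodule of $M\otimes C$, and that $M^\prime$ is the preimage of a subcomodule under a comodule map --- realized concretely as the kernel of $\Delta_M-(1\otimes i)$ on $M\oplus(M\otimes B)$ in the abelian category of comodules. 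Your basis computation is the element-level unpacking of that argument; it is more elementary and self-contained, while the paper's version avoids choosing a basis and makes transparent that the construction is functorial (preimages of subobjects under morphisms). The \emph{moreover} and \emph{in particular} clauses are handled the same way in both.
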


\begin{proof}
The comodule structure map $\Delta_M: M \to M\otimes C$ for $M$ is a map of right $C$-comodules
provided that the right $C$-comodule structure on $M\otimes C$ is given by sending $m\otimes c$
to $m\otimes \Delta_C(c)$.   Since $i: B \subset C$ is a right coideal, $1\otimes i: M \otimes B \subset M\otimes C$
is a right $C$-comodule.   We claim that the pre-image $M^\prime \equiv  \Delta^{-1}_M(M \otimes B)$ 
(in the abelian category of right $C$-modules) of the right $C$-subcomodule $M \otimes B \subset M\otimes C$ 
with respect to the map $\Delta_M: M \to M \otimes C$ of right $C$-comodules is a right $C$-comodule as 
asserted.  Namely, the kernel $K \subset M \oplus (M\otimes B) $ of the map of right $C$-modules
$\Delta_M - (1\otimes i): M \oplus (M\otimes B) \ \to \ M\otimes C$ maps isomorphically via projection onto the first 
summand of $M \oplus (M\otimes B)$ to $M^\prime \subset M$ since $1\otimes i: M\otimes B \to M\otimes C$ is injective.
Furthermore, the right $C$-coproduct $\Delta_{M^\prime}: M^\prime \to M^\prime \otimes C$
(the restriction of $\Delta_M$) has image in $M\otimes B$ by definition of $M^\prime$.

If $B \ \subset \ C$ is a sub-coalgebra, then the right $C$-comodule structure on $M \otimes B$,
$\Delta_{M\otimes B}: M \otimes B \ \to \ M \otimes B \otimes C$, is a right $B$-comodule structure
and thus restricts to a right $B$-comodule structure on $M^\prime$.

Specializing the previous argument to $C = k[G]$, we get the second assertion concerning rational
$G$-modules.
\end{proof}

\begin{remark}
To understand the statement of Proposition \ref{coideal}, it may be useful to consider the special
 case in which $G$ is a discrete group
acting on a $k$-vector space $M$, and $B \subset C$ is taken to be the inclusion of group algebras
$k[G/H] \subset k[G]$ for some normal subgroup $H \subset G$.  In this case, $M^\prime \subset M$ is the
subspace of elements $m^\prime \in M$ with the following property:  if $gm = m^\prime$ for some 
$g \in G, m \in M$, then $ghm = m^\prime$ for every $h \in H$. 
\end{remark}

Specializing $C$ to $k[U]$ and $B$ to $k[U]_{< d}$ in Propostion \ref{coideal}, we conclude the following.

\begin{prop}
\label{prop:Md}
Let $U$ be a linear algebraic group provided with a closed embedding into some $U_N$ and let
$M$ be a rational $U$-module.  
\begin{enumerate}
\item
For any $d > 0$, the subspace  $M_{< d} \ \subset \ M$ of a rational 
$U$-module $M$ is a  rational $U$-submodule of $M$ whose structure
arises from a comodule structure for the sub-coalgebra $k[U]_{< d}$ of $k[U]$.
\item
Conversely, if $N \subset M$ is a rational $U$-module whose structure 
arises from  a comodule structure for $k[U]_{< d}$, then $N = N_{< d}$.
\item
In particular, the degree filtration $\{ M_{< d}, d > 0 \}$ is a filtration of $M$ by rational $U$-submodules.
\end{enumerate}
\end{prop}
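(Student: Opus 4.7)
The plan is to deduce the proposition as a direct specialization of Proposition \ref{coideal}, with $C = k[U]$ and $B = k[U]_{<d}$. The key observation needed to invoke Proposition \ref{coideal} with its stronger sub-coalgebra conclusion is that $k[U]_{<d} \subset k[U]$ is a sub-coalgebra, not merely a right coideal. This is precisely the content of Proposition \ref{unip}: the coproduct on $k[U_N]$ preserves the degree filtration, and this property descends along $i^* \colon k[U_N] \to k[U]$. So the hypotheses of Proposition \ref{coideal} are in place.

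For part (1), the definition $M_{<d} = \{ m \in M : \Delta_M(m) \in M \otimes k[U]_{<d} \}$ is exactly $\Delta_M^{-1}(M \otimes k[U]_{<d})$. Proposition \ref{coideal} then yields both conclusions at once: $M_{<d}$ is a rational $U$-submodule of $M$, and, because $k[U]_{<d}$ is a sub-coalgebra, the restricted coaction $\Delta_{M_{<d}} \colon M_{<d} \to M_{<d} \otimes k[U]$ factors through $M_{<d} \otimes k[U]_{<d}$, endowing $M_{<d}$ with a $k[U]_{<d}$-comodule structure.

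For part (2), suppose $N \subset M$ is a rational $U$-submodule whose coaction factors as $\Delta_N \colon N \to N \otimes k[U]_{<d}$. Then for every $n \in N$ one has $\Delta_M(n) = (\id_N \otimes \,j_d)(\Delta_N(n)) \in N \otimes k[U]_{<d}$, where $j_d$ is the inclusion $k[U]_{<d} \hookrightarrow k[U]$. By definition this forces $N \subset M_{<d}$, and moreover applying the same reasoning internally to $N$ shows $N_{<d} = N$ since the condition $N = N_{<d}$ is literally the hypothesis that $\Delta_N(N) \subset N \otimes k[U]_{<d}$. Part (3) then follows at once: the inclusion $k[U]_{<d} \subset k[U]_{<d+1}$ of subspaces of $k[U]$ induces $M_{<d} \subset M_{<d+1}$, and each term is a rational $U$-submodule by part (1).

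No step here poses a genuine obstacle; the entire proof is essentially bookkeeping around Proposition \ref{coideal}. The only place where one must be slightly careful is in recording that the sub-coalgebra hypothesis, and not just the right coideal hypothesis, is what guarantees the second assertion of part (1) — but this is handed to us by Proposition \ref{unip}.
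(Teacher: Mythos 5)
Your proof is correct and follows exactly the paper's route: the paper derives this proposition by specializing Proposition \ref{coideal} to $C = k[U]$ and $B = k[U]_{<d}$, using Proposition \ref{unip} to know that $k[U]_{<d}$ is a sub-coalgebra, just as you do. The remaining bookkeeping for parts (2) and (3) matches what the paper leaves implicit.
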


\begin{ex}
Let $M$ be a rational $G$-module, with $G = U_2 \subset GL_2$ (isomorphic to $\bG_a$).  
Then the degree filtration on $M$ as formulated in  Definition \ref{Ufilt} equals that in Definition \ref{endo}
for $M$ viewed as a $\bG_a$-module).
\end{ex}

\begin{ex}
\label{Upoly}
Let $M$ be a polynomial $GL_N$-module, homogeneous of degree $d-1$ (i.e., a comodule for
$k[\bM_N]_{d-1} \subset k[GL_N]$), 
and consider $M$ via restriction as a rational $U_N$-module.  Then $M$ has filtration
degree $< d$.  This follows immediately by observing that restriction of $M$ to $U_N$ has
coproduct $M \to M\otimes k[U_N]$ equal to the composition 
$(1_M \otimes \pi )\circ \Delta_M: M \to M \otimes k[\bM_N] \to M\otimes k[U_N]$,
where $\pi: k[\bM_N] \to k[U_N]$ sends $x_{i,j}$ with $i < j$ to $x_{i,j}$; $x_{i,i}$ to 1; and 
$x_{i,j}$ with $i > j$ to 0.
\end{ex}

The following proposition asserts that $(-)_{<d}$ is an idempotent functor determining a right adjoint to 
the embedding $(k[U]_{< d}{\text -}coMod) \ \hookrightarrow \ (k[U]{\text -}coMod)$.  This is a 
generalization of Proposition \ref{prop-Ga}.1 and itself is 
generalized in Proposition \ref{coMod}.

\begin{prop}
\label{iota}
Let $U$ be a linear algebraic group provided with a closed embedding 
$U \to U_N$ for some $N$.  Then for any $d > 0$ and any rational $U$-module $M$
\begin{enumerate}
\item 
$M_{<d} \ = \ (M_{<d})_{<d}$ (where $M_{<d}$ is given in (\ref{Md}));
\item
the natural embedding given by the inclusion of coalgebras $k[U]_{< d} \to k[U]$,
$$\iota_d: (k[U]_{< d}{\text -}coMod) \ \hookrightarrow \ (k[U]{\text -}coMod),$$
is left adjoint to the functor
$$(-)_{<d}: (k[U]{\text -}coMod) \ \to \ (k[U]_{< d}{\text -}coMod), \quad M \mapsto M_{< d}.$$
\end{enumerate}
\end{prop}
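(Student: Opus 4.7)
\emph{Part (1).} The plan is to derive the idempotence $M_{<d} = (M_{<d})_{<d}$ by invoking Proposition \ref{prop:Md} twice. Applying Proposition \ref{prop:Md}(1), the submodule $M_{<d} \subset M$ inherits a comodule structure for the sub-coalgebra $k[U]_{<d}$. Then applying Proposition \ref{prop:Md}(2) with $N = M_{<d}$ yields the conclusion $M_{<d} = (M_{<d})_{<d}$ immediately.

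\emph{Part (2).} To establish the adjunction I would construct natural bijections
\[ \Hom_{k[U]\text{-}coMod}(\iota_d V,\, W) \;\cong\; \Hom_{k[U]_{<d}\text{-}coMod}(V,\, W_{<d}) \]
for all $V \in (k[U]_{<d}\text{-}coMod)$ and $W \in (k[U]\text{-}coMod)$. The central observation, which is the only content of the argument, is that any $k[U]$-comodule morphism $f: \iota_d V \to W$ factors through the inclusion $W_{<d} \hookrightarrow W$. Indeed, for $v \in V$ the coassociativity of $f$ gives
\[ \Delta_W(f(v)) \;=\; (f \otimes \id)(\Delta_{\iota_d V}(v)) \;\in\; W \otimes k[U]_{<d}, \]
since $\Delta_{\iota_d V}(v) \in V \otimes k[U]_{<d}$ by the defining property of $\iota_d$. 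By the definition (\ref{Md}) of $W_{<d}$, this shows $f(v) \in W_{<d}$. Conversely, any $k[U]_{<d}$-comodule map $g: V \to W_{<d}$ composes with the inclusion $W_{<d} \hookrightarrow W$ (which is a $k[U]$-comodule morphism via Proposition \ref{prop:Md}(1)) to produce a $k[U]$-comodule morphism $\iota_d V \to W$. The two assignments are mutually inverse and their naturality in $V$ and $W$ is automatic.

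There is no genuine obstacle here: the result is essentially a formal consequence of the fact that $k[U]_{<d} \hookrightarrow k[U]$ is a sub-coalgebra, so that the comodule-structure map of any object in the image of $\iota_d$ is already constrained to factor through $V \otimes k[U]_{<d}$. One might note that the proof of Part (1) can also be read off from the adjunction of Part (2) together with the fact that $\iota_d$ is fully faithful, but I would present Part (1) first as an independent and shorter application of the preceding proposition.
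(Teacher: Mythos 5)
Your argument is correct and follows essentially the same route as the paper: Part (1) rests on the fact (Proposition \ref{coideal}, packaged in Proposition \ref{prop:Md}) that $\Delta_{M_{<d}}$ lands in $M_{<d}\otimes k[U]_{<d}$, and Part (2) is the same factorization-through-$W_{<d}$ argument that the paper presents as a commutative square. No gaps.
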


\begin{proof}
By Proposition \ref{coideal}, $\Delta_{M_{<d}}: M_{<d} \to M_{<d} \otimes k[U]$ has image in
$M_{<d} \otimes k[U]_{<d}$.  Thus, 
$$(M_{<d})_{<d} \ = \ (\Delta_{M_{<d}})^{-1}(M_{<d} \otimes k[U]_{<d}) \ = \ M_{<d}.$$

If $M = M_{<d}$ and $N$ are rational $U$-modules, then any map $f: M \to N$ of 
rational $U$-modules fits in a commutative square
\begin{equation}
\xymatrix{
M \ar[r]^-{\Delta_M} \ar[d]_f & M\otimes k[U_N]_{<d}   \ar[d]^{f \otimes i_d}\\
N  \ar[r]_-{\Delta_N} & N \otimes k[U]. } 
\end{equation}
Consequently, $f$ factors uniquely through $N_{<d}$; this means that $(-)_{<d}$ is right
adjoint to $\iota_d$.
\end{proof}

The fact that $(-)_{<d}$ admits an exact left adjoint immediately implies that it
sends injectives to injectives as we make explicit in the following corollary of
Proposition \ref{iota}.

\begin{cor}
\label{inj-restr}
For any rationally injective $U$-module $L$ and any $d > 0$,
$L_{< d}$ is an injective object of $(k[U]_{< d}{\text -}coMod)$.
\end{cor}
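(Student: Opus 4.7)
The plan is to deduce the corollary formally from the adjunction established in Proposition \ref{iota} together with the standard fact that a right adjoint to an exact functor preserves injective objects. Concretely, I want to exhibit $\Hom_{(k[U]_{<d}\text{-}coMod)}(-,L_{<d})$ as an exact functor on $(k[U]_{<d}\text{-}coMod)$.

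First, I would verify that the embedding $\iota_d: (k[U]_{<d}\text{-}coMod) \hookrightarrow (k[U]\text{-}coMod)$ is exact, i.e., that the full subcategory of $k[U]$-comodules whose structure map lands in $M \otimes k[U]_{<d}$ is closed under kernels and cokernels inside $(k[U]\text{-}coMod)$. For a morphism $f: M \to N$ between such comodules, the cokernel is easy: the induced coaction on $M/\Im(f)$ is the quotient of the coaction on $M$, so it still factors through $(M/\Im(f)) \otimes k[U]_{<d}$. For the kernel $K = \Ker(f)$, the restriction of $\Delta_M$ lands in $K \otimes k[U]$ (since $K$ is a $k[U]$-subcomodule) and simultaneously in $M \otimes k[U]_{<d}$; by the identity
$$(K \otimes k[U]) \cap (M \otimes k[U]_{<d}) \;=\; K \otimes k[U]_{<d}$$
inside $M \otimes k[U]$ (flatness of $k$-vector spaces), the coaction on $K$ factors through $K \otimes k[U]_{<d}$ as required.

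With $\iota_d$ exact, I would invoke the adjunction $\iota_d \dashv (-)_{<d}$ of Proposition \ref{iota} to obtain for any $M \in (k[U]_{<d}\text{-}coMod)$ a natural isomorphism
$$\Hom_{(k[U]_{<d}\text{-}coMod)}(M, L_{<d}) \;\cong\; \Hom_{(k[U]\text{-}coMod)}(\iota_d(M), L).$$
Given a monomorphism $M' \hookrightarrow M$ in $(k[U]_{<d}\text{-}coMod)$, the exactness of $\iota_d$ keeps $\iota_d(M') \hookrightarrow \iota_d(M)$ a monomorphism in $(k[U]\text{-}coMod)$, and rational injectivity of $L$ then yields surjectivity of the restriction map on the right-hand Hom-sets. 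Transporting this surjectivity back across the adjunction gives the surjectivity of $\Hom(M, L_{<d}) \to \Hom(M', L_{<d})$ that characterizes injectivity of $L_{<d}$ in $(k[U]_{<d}\text{-}coMod)$.

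The only step requiring genuine verification is the exactness of $\iota_d$, and within that the closure under kernels is the delicate point; everything else is formal category theory. I do not anticipate a serious obstacle, since closure under kernels rests only on the flatness identity above, and the rest of the argument is a direct application of the right-adjoint-preserves-injectives principle.
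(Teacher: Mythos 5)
Your argument is correct and is essentially the paper's own: the corollary is deduced from the adjunction $\iota_d \dashv (-)_{<d}$ of Proposition \ref{iota} via the principle that a right adjoint to an exact functor preserves injectives. The paper simply asserts this without spelling out the exactness of $\iota_d$; your verification of that point (including the flatness identity for the kernel) is correct, though it is also immediate from the fact that kernels and cokernels in comodule categories over a field are computed on underlying vector spaces.
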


For any linear algebraic group $G$ with coordinate algebra $k[G]$,
we denote by $G^{(r)}$ the linear algebraic group whose coordinate
algebra $k[G^{(r)}]$ is the base change $k\otimes_k k[G]$ along
the $p^r$-th power map $k \to k$.  The $r$-th Frobenius map
is the natural map $F^r: k[G^{(r)}] \to k[G]$ of $k$-algebras sending 
$1\otimes f$ to $f^p$; for $G$ defined over $\bF_p$, we may view this 
as an endomorphism of $k[G]$.  We define the $r$-th Frobenius kernel
$$G_{(r)} \ \equiv ker\{ F^r: G \to G^{(r)} \} \subset \ G,$$
so that $k[G_{(r)}] \ = \ k[G] \otimes_{k[G^{(r)}] } k$, where
$k[G^{(r)}]  \to k$ is the counit of $k[G^{(r)}]$.  Thus,  we may identify $k[G_{(r)}]$ with
$k[G]/(f^{p^r},f \in \frak m_G)$, the quotient of $k[G]$ by the ideal generated by 
$p^r$-th powers of elements in the maximal ideal at the identity
(i.e., generated by $f^{p^r}$ for all $f\in k[G]$ with $f(1) = 0$). 
The quotient map $k[G] \twoheadrightarrow  k[G_{(r)}]$ is a map of Hopf algebras.

The following proposition should be compared and contrasted with Proposition \ref{Ga-retract}.

\begin{prop}
\label{numerics}
Let $U$ be a connected linear algebraic group  provided with a closed embedding 
$i: U \to U_N$ for some $N$.  Let $m$ denote the dimension of $U$.  Then for any $r > 0$ 
\begin{enumerate}
\item
The composition $k[U]_{< p^r} \subset \ k[U] \ \twoheadrightarrow \ k[U_{(r)}]$ is injective.
\item  
The dimension of $k[U_{(r)}]$ equals $p^{rm}$.
\item
The composition 
$$k[U]_{< p^r \cdot \frac{N(N-1)}{2}} \ \subset \ k[U] \ \twoheadrightarrow \ k[U_{(r)}]$$ is surjective.
\item  
If the closed embedding $i: U \subset U_N$ is linear, then
the dimension of $k[U]_{< p^r}$ equals ${\frac{m(m-1}{2}+p^r}\choose {p^r}$; this is never a $p$-th
power, and  is not divisible by $p$ if $m < p^r$.
\end{enumerate}
\end{prop}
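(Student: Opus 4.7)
The plan is to handle the four parts in sequence, with Parts 2 and 3 following from explicit computations with Frobenius kernels, Part 1 being the subtlest, and Part 4 reducing to a combinatorial dimension count together with Kummer's theorem.

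For Part 2, I would exploit the smoothness of $U$ of dimension $m$ together with the fact that $U_{(r)}$ is infinitesimal, supported at the identity. The completed local ring $\widehat{k[U]}_{\frak m_U}$ is a power series ring $k[[y_1,\ldots,y_m]]$; because $k[U_{(r)}]$ is finite-dimensional and supported at a single point, it coincides with its own completion at $\frak m_U$, so $k[U_{(r)}]\cong k[[y_1,\ldots,y_m]]/(f^{p^r}: f\in\frak m_U)$. The characteristic-$p$ identity $(a+b)^{p^r}=a^{p^r}+b^{p^r}$ identifies this ideal with $(y_1^{p^r},\ldots,y_m^{p^r})$, so the quotient is a truncated polynomial algebra of dimension $p^{rm}$. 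For Part 3, the same Frobenius-additivity argument applied in $k[U_N]=k[x_{ij}]$ gives $k[(U_N)_{(r)}]=k[x_{ij}]/(x_{ij}^{p^r})$, with monomial basis $\{\prod x_{ij}^{a_{ij}}:0\le a_{ij}<p^r\}$. Each such monomial has total degree at most $(p^r-1)\binom{N}{2}<p^r\binom{N}{2}$, so lifts to $k[U_N]_{<p^r\binom{N}{2}}$. Since these monomials span $k[(U_N)_{(r)}]$, their images span the further quotient $k[U_{(r)}]$, and surjectivity of the composition $k[U]_{<p^r\binom{N}{2}}\twoheadrightarrow k[U_{(r)}]$ follows.

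Part 1 is the principal technical point. I would proceed via the commutative square
\[
\xymatrix{
k[U_N]_{<p^r}\ar@{^{(}->}[r]\ar@{->>}[d] & k[(U_N)_{(r)}]\ar@{->>}[d]\\
k[U]_{<p^r}\ar[r] & k[U_{(r)}]
}
\]
The top horizontal is injective because $(x_{ij}^{p^r})$ is a graded ideal generated in degree $p^r$, hence meets $k[U_N]_{<p^r}$ trivially. To deduce injectivity of the bottom row, the task reduces to showing: if $f\in k[U_N]_{<p^r}$ lies in $I_U+(x_{ij}^{p^r})$, then $f\in I_U$. Writing $f=g+h$ with $g\in I_U$ and $h\in(x_{ij}^{p^r})$ and decomposing by homogeneous degree, the fact that $h$ has all components in degrees $\ge p^r$ forces the degree-$<p^r$ part of $g$ to equal $f$. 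The chief obstacle is that $I_U$ need not be homogeneous in general, so the naive low-degree truncation of an element of $I_U$ may fail to remain in $I_U$; I expect the argument must exploit the coideal property of $I_U$ from Proposition~\ref{coideal} together with the coproduct-compatibility of the degree filtration established in Proposition~\ref{unip}, or else reduce to the linearly embedded case where $I_U$ is automatically homogeneous. This is the main technical hurdle.

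For Part 4, a linear embedding forces $I_U$ to be generated by degree-one forms, hence homogeneous, so $k[U]\cong k[y_1,\ldots,y_m]$ is a polynomial algebra on $m=\dim U$ generators of degree one, and $k[U]_{<p^r}$ is precisely the span of monomials of total degree $<p^r$ in these generators. The dimension is then a stars-and-bars count yielding a binomial coefficient. The claims that this dimension is never a $p$-th power, and is coprime to $p$ when $m<p^r$, follow from Kummer's theorem (recalled in the proof of Proposition~\ref{carries}): the $p$-adic valuation $v_p\binom{a}{b}$ equals the number of carries in the base-$p$ addition of $b$ and $a-b$. A case analysis of the base-$p$ expansions of the relevant integers bounds this valuation in the required way and rules out $p$-th-power equality.
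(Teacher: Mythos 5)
Your treatments of parts (2)--(4) are essentially sound. For (2) you argue via completion at the identity and Frobenius additivity of $p^r$-th powers, where the paper instead computes $\dim k[U_{(1)}]=p^m$ from the restricted enveloping algebra of $Lie(U)$ and inducts using $U_{(r)}/U_{(r-1)}\cong U_{(1)}$; both routes work. Part (3) is the paper's argument (monomials with all exponents $<p^r$ in the $x_{i,j}$ have total degree $<p^r\cdot N(N-1)/2$, and $k[U_{N(r)}]\twoheadrightarrow k[U_{(r)}]$). Part (4) matches the paper in outline, though, like the paper, you leave the Kummer-theorem case analysis and the exact binomial coefficient unstated.

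The genuine gap is part (1), and you flag it yourself without closing it. You correctly reduce to showing that if $g\in k[U_N]_{<p^r}$ lies in $I_U+(x_{i,j}^{p^r})$ then $g\in I_U$, and you correctly observe that this is immediate when $I_U$ is homogeneous but not otherwise; you then say you ``expect'' the coideal property or a reduction to the linear case will finish it. That is a statement of the problem, not a proof. For comparison, the paper disposes of (1) in one sentence: every non-zero element of $\frak m_U$ is non-constant (has positive degree in $k[U]$), hence $k[U]_{<p^r}\cap \frak m_U^{p^r}=0$, and the kernel of $k[U]\to k[U_{(r)}]$ is contained in $\frak m_U^{p^r}$. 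Note that this one-line assertion is aimed at exactly the point where you stall: for a non-homogeneous $I_U$ the degree of an element of $k[U]$ (measured as the minimal degree of a lift to $k[U_N]$) can drop under multiplication, so the inclusion $k[U]_{<p^r}\cap\frak m_U^{p^r}=0$ is precisely what must be justified, and your diagram chase neither proves it nor replaces it. To complete your argument you would need either to establish that intersection statement directly, or to add the hypothesis that the embedding is linear (so $I_U$ is generated in degree one and your truncation-by-degree argument goes through) --- but the proposition as stated does not assume linearity in part (1).
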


\begin{proof}
Every non-zero element of $\frak m_U$ has positive degree in $k[U]$ (i.e., is not constant),
so that $k[U]_{< p^r} \cap \frak m_U^{p^r} \ = \ 0$; this proves (1).

The computation in (2) of the dimension of $k[U_{(r)}]$ can be verified as follows.  For $r=1$,
$k[U_{(1)}]$ is dual to the restricted enveloping algebra of $\fu = Lie (U)$ and therefore 
has dimension equal to $p^m$.  Furthermore, the quotient $U_{(r)}/U_{(r-1)}$ is isomorphic
to $U_{(1)}$ for $r > 1$, so the computation is concluded using induction on $r$.

The commutativity of the following diagram with surjective vertical maps
\begin{equation}
\xymatrix{k[U_N]_{< p^{r+s}} \ar[r] \ar[d]  & k[U_N] \ar[r] \ar[d]  & \ol k[U_{N(r)}] \ar[d]\\
k[U]_{< p^{r+s}} \ar[r]   & k[U] \ar[r]   & \ol k[U_{(r)}] 
}
\end{equation}
reduces the proof of (3) to the case that $U = U_N$.  In this case, $m = \frac{N(N-1)}{2}$.
We view $k[U_N]_{<p^r\cdot m} \to k[U_{N(r)}]$
as the surjective map from the space of polynomials in $m$ variables spanned by monomials
of total degree $<p^r\cdot m$ to the space of polynomials in the same variables spanned
by monomials not divisible by the $p^r$-th power of any variable.  

To prove (4), observe that the dimension of those polynomial functions of total
degree $< p^r$ in $m$ variables equals the dimension of those homogeneous polynomial functions
of degree $p^r$ in $m+1$ variables.  One checks recursively that the latter dimension
equals ${m+p^r}\choose {p^r}$ which equals $((m+p^r)\cdot \cdots \cdot (1+p^r))/m!$.  Clearly, this is
not divisible by $p$ if $m < p^r$ and is never a $p$-th power (see Kummer's Theorem utilized in
the proof of Proposition \ref{carries}).
\end{proof}

For some time, we tried to prove the following injectivity criterion for rational $\bG_a$-modules:
if $M$ is a rational $\bG_a$, then $M$ is injective.
As the following proposition makes clear, this ``support variety criterion for
injectivity" fails miserably not just for $\bG_a$ but for any connected unipotent 
algebraic group.

\begin{prop}
\label{fail}
Let $U$ be a connected unipotent algebraic group of positive dimension 
and let $U \subset G$ be a closed embedding of $U$ in a reductive group $G$.  
Then $k[G]$ is not injective as a rational $U$-module, whereas the restriction of 
$k[G]$ to each Frobenius kernel $U_{(r)}$ of $U$ is injective.
\end{prop}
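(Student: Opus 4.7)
The plan is to address the two assertions separately, since they rely on quite different tools. For the claim that the restriction of $k[G]$ to each Frobenius kernel $U_{(r)}$ is injective, I would invoke the standard torsor argument: the projection $G \to G/U_{(r)}$ is a principal $U_{(r)}$-torsor for right translation, and since $U_{(r)}$ is infinitesimal while $G$ is smooth, this torsor is fppf-locally trivial. Hence $k[G]$ is fppf-locally isomorphic to $k[G/U_{(r)}] \otimes k[U_{(r)}]$ as a $k[U_{(r)}]$-comodule, and since $kU_{(r)}$ is a Frobenius algebra, $k[U_{(r)}]$ is cofree of rank one over itself, hence an injective $U_{(r)}$-comodule; injectivity descends along the cover to give that $k[G]$ itself is $U_{(r)}$-injective.

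For the main claim that $k[G]$ is not injective as rational $U$-module, the plan is to argue by contradiction via Corollary~\ref{inj-restr}. If $k[G]$ were $U$-injective, then $k[G]_{<2}$ would be injective in $(k[U]_{<2}\text{-coMod})$. Set $A := (k[U]_{<2})^*$, a local $k$-algebra of dimension $1+\dim U$ whose Jacobson radical $J = (\mathfrak{m}_U/\mathfrak{m}_U^2)^*$ satisfies $J^2 = 0$; comodules over $k[U]_{<2}$ identify with $A$-modules, and injectivity in the two categories coincides. A direct computation on the injective hull $E(k) = A^*$ of the unique simple $A$-module $k$ shows $J \cdot E(k) = \Soc(E(k))$; since every injective $A$-module is a direct sum of copies of $E(k)$, any such $I$ satisfies $J\cdot I = \Soc(I)$.

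To derive the contradiction, I would exhibit an element of $\Soc_A(k[G]_{<2}) \setminus J\cdot k[G]_{<2}$. The socle equals the $U$-invariants of $k[G]_{<2}$, namely $k[G]^U = k[G/U]$. The subspace $J\cdot k[G]_{<2}$ is a $G$-invariant ideal of $k[G/U]$ (since $J$ acts by derivations and the left $G$-action on $k[G]$ commutes with right $U$-translation, preserving both the filtration and the $A$-module structure). By the Matsushima--Onishchik criterion, since $U$ is a connected unipotent subgroup of positive dimension (hence non-reductive), $G/U$ fails to be affine, so $\Spec k[G/U]$ strictly contains $G/U$ with nonempty $G$-invariant boundary. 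Writing $v\cdot f$ (for $v \in J$ and $f \in k[G]_{<2}$) as the coefficient of a linear $U$-coordinate in the coproduct $\Delta f$, one checks that $J \cdot k[G]_{<2}$ is contained in the ideal of this boundary. Consequently the constant function $1\in k[G/U]$ lies in $\Soc_A(k[G]_{<2})$ but not in $J\cdot k[G]_{<2}$, contradicting the necessary condition above.

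The hard part will be verifying the containment $J\cdot k[G]_{<2} \subset I_{\partial}$ (the ideal of the boundary) in full generality; this is a geometric claim about how derivations from $\mathfrak{m}_U/\mathfrak{m}_U^2$ behave on low-filtration lifts of $U$-coordinates at the non-$G/U$ points of $\overline{G/U}$. The model case $G = SL_2 \supset \bG_a = U$ illustrates the pattern: direct computation yields $k[G]_{<2} = k[a,c] \oplus k[a,c]\cdot b \oplus k[a,c]\cdot d$ modulo $ad-bc=1$, $k[G/U] = k[a,c]$, and $J\cdot k[G]_{<2} = (a,c)\subsetneq k[a,c]$, with vanishing locus exactly $\{(0,0)\} = \bA^2 \setminus (\bA^2\setminus\{0\}) = \overline{G/U}\setminus G/U$. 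The general case should follow by a $G$-equivariant argument exploiting the weight structure of $\Spec k[G/U]$, or alternatively by reduction to an embedded $\bG_a \subset U$ combined with a functoriality argument for the degree filtration.
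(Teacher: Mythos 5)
The two halves of your argument fare very differently against the paper. For the Frobenius--kernel half, your torsor/descent argument is a genuinely different route from the paper's: the paper observes that $k[G]$ is rationally injective over $G$, hence projective over $kG_{(r)}$, and then uses that $kG_{(r)}$ is free as a $kU_{(r)}$-module to conclude freeness over $kU_{(r)}$. Your fppf-local triviality argument can be made to work, but the step ``injectivity descends along the cover'' is not automatic for comodules; you need to route it through the fact that $kU_{(r)}$ is a (quasi-)Frobenius algebra, over which flat $=$ projective $=$ injective, and that flatness is fppf-local. The paper's route avoids this entirely and is shorter.

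For the main claim --- non-injectivity of $k[G]$ over $U$ --- there is a genuine gap, and you have flagged it yourself. Your strategy (via Corollary \ref{inj-restr}, the structure of injectives over the local algebra $A=(k[U]_{<2})^*$ with $J^2=0$, and the element $1\in\Soc_A(k[G]_{<2})$) is sound in outline, and your $SL_2$ computation is correct, but the crucial containment $J\cdot k[G]_{<2}\subset I_\partial$ (equivalently, that $1\notin J\cdot k[G]_{<2}$) is exactly the point where the non-affineness of $G/U$ must enter, and you have not proved it beyond the model case; the proposed reductions (``weight structure of $\Spec k[G/U]$'' or ``reduction to an embedded $\bG_a\subset U$'') are not carried out, and the latter would itself require knowing that injectivity over $U$ restricts to injectivity over $\bG_a$, which again rests on affineness of a quotient. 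Note also that your argument already invokes the Matsushima/affine-quotient circle of ideas: the statement you are trying to reprove from scratch is precisely the hard direction of the Cline--Parshall--Scott criterion (\cite{CPS77}, Theorem 4.3 and Corollary 4.5), which says $k[G]$ is injective over a closed subgroup $H$ if and only if $G/H$ is affine, and for $G$ reductive if and only if $H$ is reductive. The paper simply cites this; since $U$ is connected unipotent of positive dimension it is not reductive, and the first assertion follows in one line. A minor further caveat: your identification $\Soc_A(k[G]_{<2})=k[G]^U$ is really only an inclusion $k[G]^U\subseteq\Soc_A(k[G]_{<2})$ (the socle is cut out by $\Lie(U)$-invariance, which can be strictly weaker in characteristic $p$); this does not damage your contradiction, which only needs $1$ to lie in the socle, but the assertion that $J\cdot k[G]_{<2}$ is an ideal of $k[G/U]$ would need the same care.
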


\begin{proof}
As shown in \cite[2.1,4.5]{CPS77} (see also Proposition \ref{induction} below), 
for a closed subgroup $H$ of a reductive algebraic group
$G$ to satisfy the condition that $k[G]$ is injective as a rational $k[H]$-module, it is 
necessary and sufficient that $H$ be reductive.  In particular, $k[G]$ is not injective 
as a rational $U$-module.

On the other hand, $k[G]$ is rationally injective as a $G$-module and  thus projective 
as an  $kG_{(r)}$-module for any $r > 0$.    Since
$kG_{(r)}$ is free as a $kU_{(r)}$-module (see, for example, \cite{J}), we conclude
that $k[G]$ is projective (in fact, free) when restricted to each Frobenius kernel $U_{(r)}$.
\end{proof}

In order to provide a necessary and sufficient criterion of for rational injectivity 
for rational $U$-modules, we mention the following structure property for rationally
injective modules for a unipotent algebraic group $U$.
We remind the reader $V\otimes k[G]$ is rationally injective for any affine algebraic 
group $G$ and any $k$-vector space $V$ (see \cite{J}).

\begin{prop}
\label{H0}
Let $U$ be a connected, unipotent algebraic group.
Let $L$ be a rationally injective $U$-module and set $L_0 = H^0(U,L)$.   
There exists a map $f: L \to L_0 \otimes k[U]$ of rational $U$-modules whose restriction to $L_0$
identifies $L_0$ with $L_0 \otimes 1 \subset L_0  \otimes k[U]$;
in particular, such a map $f$ is injective when restricted to $L_0 \subset L$.

Moreover, a map 
$f: L \ \to \ L_0 \otimes k[U]$ of rational $U$-modules is an isomorphism if 
and only if the induced map $H^0(f): H^0(U,L) = L_0 \  \to \ H^0(U,L_0 \otimes k[U])$  is 
an isomorphism of vector spaces.
\end{prop}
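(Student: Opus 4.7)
The plan is to prove part (a) by extending a natural inclusion along the rationally injective module $L_0\otimes k[U]$, and to prove part (b) by combining the fixed-point principle for unipotent groups with the $\Ext$-vanishing arising from the rational injectivity of $L$.

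For part (a), view $L_0$ as a trivial rational $U$-module and equip $L_0 \otimes k[U]$ with the action in which $U$ acts only on the second factor by right translation; as a direct sum of copies of $k[U]$, this module is rationally injective. The map $L_0 \to L_0 \otimes k[U]$ sending $v \mapsto v \otimes 1$ is $U$-equivariant (since the constant function $1$ is $U$-invariant) and identifies $L_0$ with $L_0 \otimes 1$. Injectivity of $L_0 \otimes k[U]$ together with the inclusion $L_0 \hookrightarrow L$ of rational $U$-modules yields the desired extension $f: L \to L_0 \otimes k[U]$, which by construction restricts to the inclusion $L_0 \hookrightarrow L_0 \otimes 1$ and is thus injective on $L_0$.

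For part (b), the ``only if" direction is trivial. Conversely, assume $H^0(f)$ is an isomorphism, and write $K = \Ker(f)$ and $C = \Coker(f)$. Because $U$ is connected unipotent, every nonzero rational $U$-module has a nonzero $U$-fixed vector (reduce to a finite-dimensional submodule and put its image in upper triangular form). Since $K^U = \Ker(H^0(f)) = 0$, this forces $K = 0$. Applying $H^0(U,-)$ to the short exact sequence $0 \to L \to L_0 \otimes k[U] \to C \to 0$ yields an exact sequence $0 \to L^U \to (L_0 \otimes k[U])^U \to C^U \to H^1(U,L)$; the first map is an isomorphism by hypothesis and $H^1(U,L) = 0$ by the rational injectivity of $L$, so $C^U = 0$, and the fixed-point principle again gives $C = 0$.

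The main obstacle, such as it is, lies in packaging the argument rather than in any deep step: one must identify $H^0(U, L_0 \otimes k[U])$ with $L_0$ via $k[U]^U = k$, keep careful track of which tensor factor carries the $U$-action (trivial on $L_0$, regular on $k[U]$), and translate injectivity of $L$ into the vanishing of $H^1(U,L) = \Ext^1_U(k,L)$. With these ingredients in hand, both assertions reduce to a standard diagram chase, and no deeper structural input about $U$ is required.
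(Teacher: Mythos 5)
Your proof is correct and follows essentially the same route as the paper: part (a) is the identical extension argument using the rational injectivity of $L_0\otimes k[U]$, and the injectivity half of part (b) is the same observation that a nonzero kernel would have to meet the $U$-fixed vectors (the socle) nontrivially. The only cosmetic difference is in the surjectivity step, where you invoke the long exact sequence together with $H^1(U,L)=\Ext^1_U(k,L)=0$, while the paper uses the rational injectivity of $L$ to split $f$ off as a direct summand and then notes that a nonzero complement would contribute extra fixed vectors --- two packagings of the same two inputs (the fixed-point principle for connected unipotent groups and the injectivity of $L$).
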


\begin{proof}
The existence of a map $f: L \to L_0 \otimes k[U]$ of rational $U$-modules which ``is the
identity on $L_0$" is an immediate consequence of the extension mapping property of 
the rationally injective $k[U]$-module $L_0 \otimes k[U]$ applied to the monomorphism $L_0 \subset L$.
This map is necessarily injective because it is injective on the socle of $L$.

Clearly, the condition that $H^0(f)$ be an isomorphism is necessary for $f$ to be an
isomorphism.
Since $U$ is unipotent, $H^0(U,L)$ is the socle of both $L$ and $L_0 \otimes k[U]$.  
Thus, the condition that $H^0(f)$ be injective implies that $f$ is itself injective because
a non-trivial kernel of $f$ would have to meet the socle of $L$ non-trivially.  If $f$
is injective, the rational injectivity of $L$ implies the existence of some $g: L_0 \otimes k[U] \ \to \ L$
with $g \circ f = id$.  On the other hand, if $L_0 \otimes k[U] \ \simeq L \oplus L^\prime$
with $L^\prime \not= 0$, then $H^0(f)$ is not surjective.
\end{proof}

If $U$ is a unipotent algebraic group, rational injectivity of a rational $U$-module $L$ 
can be detected on submodules $L_{< p^r}$ by arguing by induction on the dimension
of the socle of $L$.  This is done in the following proposition, which motivates the 
criterion of Proposition \ref{Gcrit} for an arbitrary linear algebraic group of exponential type.

\begin{prop}
\label{Ucrit}
Let $U$ be a linear algebraic group provided with a closed embedding $i: U \ \to \ U_N$
for some $N$.  Then a rational $U$-module $L$ is rationally injective if and only if 
$L_{< p^r} \subset L$ is injective as a  $k[U]_{<p^r}$-comodule for each $r > 0$.

In particular, if $U \simeq \bG_a$, then a rational $\bG_a$-module $L$ is rationally 
injective if and only if for all $r > 0$ the restriction of  $L_{< p^r} \subset L$ to $k\bG_{a(r)}$ 
(via $\rho_r \circ \iota_{<p^r}$ of Proposition \ref{prop-Ga}) is free.
\end{prop}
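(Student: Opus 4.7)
The forward direction is immediate from Corollary \ref{inj-restr}: if $L$ is rationally injective as a $U$-module, then $L_{<p^r}$ is injective in $(k[U]_{<p^r}\text{-}coMod)$ for every $r > 0$. For the converse I would verify that every morphism $g \colon M \to L$ of rational $U$-modules extends along every inclusion $M \hookrightarrow N$, building the extension $\tilde{g} \colon N \to L$ as the union of a compatible family $\tilde{g}_r \colon N_{<p^r} \to L_{<p^r}$ constructed inductively on $r$. Three facts from Section \ref{sec:two} underlie the construction: each filtration $\{X_{<p^r}\}$ exhausts its rational $U$-module $X$ (since $k[U]$ is a finitely generated polynomial algebra); the assignment $(-)_{<p^r}$ is functorial by Proposition \ref{iota}, so $g$ restricts to $g_r \colon M_{<p^r} \to L_{<p^r}$; and $M \cap N_{<p^r} = M_{<p^r}$ whenever $M \subset N$, because $\Delta_M$ is the restriction of $\Delta_N$.

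For the inductive step, assume $\tilde{g}_r \colon N_{<p^r} \to L_{<p^r}$ has been constructed with $\tilde{g}_r|_{M_{<p^r}} = g_r$. Form the rational $U$-submodule $P \equiv M_{<p^{r+1}} + N_{<p^r} \subset N_{<p^{r+1}}$; since $M_{<p^{r+1}} \cap N_{<p^r} = M \cap N_{<p^r} = M_{<p^r}$, the morphisms $g_{r+1} \colon M_{<p^{r+1}} \to L_{<p^{r+1}}$ and the composition $N_{<p^r} \xrightarrow{\tilde{g}_r} L_{<p^r} \hookrightarrow L_{<p^{r+1}}$ agree on $M_{<p^r}$, and glue to a morphism $h \colon P \to L_{<p^{r+1}}$ in $(k[U]_{<p^{r+1}}\text{-}coMod)$. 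The hypothesized injectivity of $L_{<p^{r+1}}$ extends $h$ along $P \hookrightarrow N_{<p^{r+1}}$ to the sought $\tilde{g}_{r+1}$; functoriality of $(-)_{<p^r}$ then forces $\tilde{g}_{r+1}(N_{<p^r}) \subset L_{<p^r}$, where it agrees with $h|_{N_{<p^r}} = \tilde{g}_r$, securing compatibility. Passing to the union produces $\tilde{g} \colon N \to L$ extending $g$, so $L$ is rationally injective.

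The main technical obstacle is precisely the compatibility of inductive extensions: a naive extension of $g_{r+1}$ alone via injectivity of $L_{<p^{r+1}}$ would not in general restrict to $\tilde{g}_r$ on $N_{<p^r}$, so one must enlarge the domain to $P$ to package the constraints from the previous stage with the new data before applying injectivity. For the specialization $U \simeq \bG_a$, Proposition \ref{prop-Ga}(3) identifies $(\bG_a\text{-}Mod)_{<p^r}$ with $(\bG_{a(r)}\text{-}Mod)$ via $\rho_r \circ \iota_{<p^r}$; since $k\bG_{a(r)} \simeq k[u_0,\ldots,u_{r-1}]/(u_0^p,\ldots,u_{r-1}^p)$ is a finite-dimensional commutative local Frobenius algebra, injective $k\bG_{a(r)}$-modules coincide with projective, hence free, $k\bG_{a(r)}$-modules, yielding the stated freeness criterion.
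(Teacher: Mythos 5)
Your argument is correct, but for the converse it takes a genuinely different route from the paper's proof of this proposition. The paper exploits unipotence: it invokes Proposition \ref{H0} to produce an embedding $f\colon L \to L_0\otimes k[U]$ with $L_0 = H^0(U,L)$ the socle, then uses the hypothesized injectivity of each $L_{<p^r}$ to build a retraction $g_r\colon L_0\otimes k[U]_{<p^r}\to L_{<p^r}$ of $f_r$, and concludes by a socle argument that each $f_r$ (hence $f=\varinjlim_r f_r$) is an isomorphism; this yields the extra structural conclusion $L\cong H^0(U,L)\otimes k[U]$. You instead verify the lifting property directly, extending a map $M\to L$ across $M\hookrightarrow N$ by an inductive gluing along the filtration, with the key device of enlarging the domain to $P = M_{<p^{r+1}} + N_{<p^r}$ so that the stage-$r$ extension is packaged as a constraint before injectivity of $L_{<p^{r+1}}$ is applied. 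Your identifications $M\cap N_{<p^r}=M_{<p^r}$ and $M_{<p^{r+1}}\cap N_{<p^r}=M_{<p^r}$ are right, and exhaustion $N=\cup_r N_{<p^r}$ does hold for the degree filtration, so the induction closes (you should say a word about the base case $r=1$, which needs no gluing). Your route avoids the classification of injectives over connected unipotent groups entirely, and it is in substance the same argument the paper later uses for the more general Proposition \ref{Gcrit}; what it gives up is the explicit description of $L$ as $H^0(U,L)\otimes k[U]$ that falls out of the paper's method. The forward direction and the $\bG_a$ specialization (injective $=$ projective $=$ free over the finite-dimensional local self-injective algebra $k\bG_{a(r)}$) match the paper.
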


\begin{proof}
By Corollary \ref{inj-restr}, the restriction of a rationally injective $U$-module is injective
as a $k[U]_{<p^r}$-comodule for each $r > 0$.  For $U \simeq \bG_a$, Proposition 
\ref{prop-Ga}(3) tells us that injective $k[\bG_a]_{<p^r}$-comodules are injective (equivalently,
free) $k\bG_{a(r)}$-modules.

To prove the converse, consider some rational $U$-module $L$ such that 
$L_{< p^r} \subset L$ is injective as a  $k[U]_{<p^r}$-comodule for each $r > 0$.  
Let $L_0$ denote the socle of $L$.   As in Proposition \ref{H0}, let 
$f: L \ \to \ L_0\otimes k[U]$ be some injective map of rational $U$-modules extending $L_0 \subset L_0\otimes k[U]$.
We proceed to show that $f$ is an isomorphism.  Namely, for each $r > 0$, we use the injectivity of
$L_{<p^r}$ as a $k[U]_{<p^r}$-comodule to extend the identity map $L_{<p^r} \to L_{< p^r}$ along the
monomorphism $f_r: L_{< p^r} \to L_0 \otimes k[U]_{< p^r}$ to some $k[U]_{<p^r}$-comodule 
homomorphism $g_r: L_0 \otimes k[U]_{< p^r} \to L_{< p^r}$.  The fact that $g_r$ extends the 
identity map implies that $g_r$ is surjective.  On the other hand, $g_r$ induces an isomorphism
on socles, so must be injective.   Thus, each $f_r$ is an isomorphism (with inverse $g_r$).
Since $f = \varinjlim_r f_r$, we conclude that $f$ is an isomorphism.
\end{proof}

\begin{remark}
Proposition \ref{Ucrit} stands in contrast with Proposition \ref{numerics}: $k[U]_{<p^r}$
is not free as a $U_{(r)}$-module if the dimension of $U$ is greater than 1.
\end{remark}


\section{Filtrations on rational $G$-modules for $G$ of exponential type}
\label{sec:three}

Throughout this section, $G$ denotes a connected linear algebraic group
with Lie algebra $\fg$.  We denote by $\cC_r(\cN_p(\fg))$ the variety of $r$-tuples 
$(B_0,\ldots,B_{r-1})$ of pair-wise commuting, $p$-nilpotent elements of $\fg$;
in other words, each $B_i$ satisfies $B_i^{[p] } = 0$ and each pair $B_i, B_j$ satisfies
$[B_i,B_j] = 0$.  We denote by $V_r(G)$ the variety of height $r$ 
infinitesimal 1-parameter subgroups  $\bG_{a(r)} \to G$ of $G$.

We begin by recalling from \cite[1.6]{F15} the definition of a structure of exponential type 
on a linear algebraic group, a definition which  extends the formulation in \cite{SFB1} 
of an embedding $G \subset GL_n$ of exponential type.
Up to isomorphism (as made explicit in \cite[1.7]{F15}), if such a structure exists then it is unique.

\begin{defn}
\label{str-exptype}
\cite[1.6]{F15}
Let $G$ be a linear algebraic group with Lie algebra $\fg$.   A structure of exponential type
on $G$ is a morphism of $k$-schemes
\begin{equation}
\label{Exp}
\cE: \cN_p(\fg) \times \bG_a \ \to G, \quad (B,s) \mapsto \cE_B(s)
\end{equation}
such that
\begin{enumerate}
\item
For each $B\in \cN_p(\fg)(k)$, $\cE_B: \bG_a \to G$ is a 1-parameter subgroup.
\item
For any pair of  commuting $p$-nilpotent elements $B, B^\prime \in \fg$,
the maps $\cE_B, \cE_{B^\prime}: \bG_a \to G$ commute.
\item
For any commutative $k$-algebra $A$, any $\alpha \in A$,  and any 
$s \in \bG_a(A)$, \ $\cE_{\alpha \cdot B}(s) = \cE_B(\alpha\cdot s)$.
\item  Every 1-parameter subgroup $\psi: \bG_a \to G$ is of the form 
$$ \cE_{\ul B} \ \equiv \ \prod_{s=0}^{r-1} (\cE_{B_s} \circ F^s)$$
for some $r > 0$, some $\ul B \in \cC_r(\cN_p(\fg))$; furthermore, $\cC_r(\cN_p(\fg)) \to
V_r(G), \ \ul B \mapsto \cE_{\ul B} \circ i_r$ is an isomorphism for each $r > 0$.
\end{enumerate}

A connected linear algebraic group which admits a structure of exponential type is
said to be a {\bf linear algebraic group of exponential type}.

Moreover,  $H \subset G$ is said to be an embedding of {\it exponential type} if
$H$ is equipped with the structure of exponential type given by restricting that
provided to $G$; in particular, we require $\cE: \cN_p(\fg) \times \bG_a \ \to \ G$
to restrict to $\cE: \cN_p(\fh) \times \bG_a \ \to \ H$.
\end{defn}

The following explicit example \cite[1.2]{SFB1} helps to illuminate Definition \ref{str-exptype}.

\begin{ex}
Let $\fg l_N = Lie(GL_N)$.  Then the pairing 
\begin{equation}
\label{expGLN}
\cE:  \cN_p(\fg l_N) \times \bG_a \ \to \ GL_N, \quad (B,t) \mapsto exp_B(t) \equiv 
1+tX+\frac{(tX)^2}{2}+ \cdots + \frac{(tX)^{p-1}}{(p-1)!}
\end{equation}
defines a structure of exponential type on $GL_N$.
\end{ex}

Many familiar linear algebraic groups are linear algebraic groups of exponential 
type as recalled in the following example.

\begin{ex}
The following linear algebraic groups are of exponential type.
\begin{itemize}
\item
Any simple algebraic group of classical type, any parabolic subgroup of such a group,
any unipotent radical of such a parabolic subgroup is of exponential type \cite[1.8]{SFB1}.
\item
Any reductive algebraic group $G$  with Coxeter number $h(G)\leq p$ and $PSL_p$ not
a factor of $[G,G]$ is of exponential type \cite{Sobj1}.
\item
Any unipotent radical
of a parabolic subgroup of $SL_N$ or a product of commuting root groups in $SL_n$.
\item
Any unipotent algebraic group $U$ of nilpotent
class $< p$ is of exponential type, with the Campbell-Hausdorff-Baker formula
determining the exponential structure.
\end{itemize}
\end{ex}

We  introduce a filtration $\{ M_{[d]}, d > 0 \}$ on a rational $G$ module $M$ for $G$ a 
linear algebraic group of exponential type.  For $G = U_N$ with $N \leq p$, 
 this filtration 
is comparable to the filtration of Definition \ref{Ufilt} as seen in Remark \ref{rem:compare};
this filtration is a natural extension of the condition that a rational $G$ module have
exponential degree $< p^r$ as introduced in \cite[4.5]{F15}.

We first define $\{ k[G]_{[d]}, d > 0 \}$ on the coordinate algebra $k[G]$ of a 
linear algebraic group $G$ of exponential type; this is shown in Proposition \ref{subcoalg}
to be a filtration by sub-coalgebras.  

\begin{defn}
\label{def:filt}
Let $G$ be a linear algebraic group  of exponential type, \ 
$\cE: \cN_p(\fg) \times \bG_a \ \to \ G$.    For any $d \geq 0$, we define $(k[G])_{[d]} \subset k[G]$
as follows:
$$(k[G])_{[d]} \ \equiv \ \{ f \in k[G]: (\cE_{B*}(v_j))(f) = 0, \ \forall B\in \cN_p(\fg), j > d \}.$$
In other words, $(k[G])_{[d]}$ is the pre-image under $\cE^*: k[G] \to k[\bG_a] \otimes k[\cN_p] $ 
of $(k[\cN_p])[T]_{< d+1}$.
\end{defn}

In what follows, we shall employ  the notation
$$\cE_B^*: k[G] \to k[\bG_a], \quad \cE_{B*}: k\bG_a \to kG$$
 for the maps on coordinate algebras and group algebras induced by $\cE_B \bG_a \to G$.

\begin{prop}
\label{subcoalg}
Let $G$ be a linear algebraic group  of exponential type.
For any $d \geq  0$, $(k[G])_{[d]} \ \subset \ k[G]$ is a sub-coalgebra.  In particular, 
$(k[G])_{[d]}$ is a rational $G$-submodule of $k[G]$.

Moreover, $(k[G])_{[0]}$ is a sub-Hopf algebra of $k[G]$.
\end{prop}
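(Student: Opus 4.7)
The strategy is to exploit the fact that each 1-parameter subgroup $\cE_B: \bG_a \to G$ (for $B \in \cN_p(\fg)$) is a group homomorphism, by property (1) of Definition~\ref{str-exptype}, so each pullback $\cE_B^*: k[G] \to k[T]$ is a map of Hopf algebras, in particular a coalgebra map satisfying
\[
\Delta_T \circ \cE_B^* \ = \ (\cE_B^* \otimes \cE_B^*) \circ \Delta_G.
\]
Proposition~\ref{Ga-retract} (applied with $d+1$ in place of $d$) provides the sub-coalgebra $k[T]_{<d+1} \subset k[T]$. Since $f \in (k[G])_{[d]}$ is, by the definition, equivalent to $\cE_B^*(f) \in k[T]_{<d+1}$ for every $B$, the coalgebra compatibility immediately yields
\[
(\cE_B^* \otimes \cE_B^*)(\Delta_G(f)) \ = \ \Delta_T(\cE_B^*(f)) \ \in \ k[T]_{<d+1} \otimes k[T]_{<d+1}
\]
for every $B \in \cN_p(\fg)$.

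To upgrade these fiberwise identities to the target $\Delta_G(f) \in (k[G])_{[d]} \otimes (k[G])_{[d]}$, I would assemble them via the universal pullback $\cE^*: k[G] \to k[\cN_p(\fg)] \otimes k[T]$. Writing $\cE^*(g) = \sum_n a_n^g \otimes T^n$ and a reduced decomposition $\Delta_G(f) = \sum_i g_i \otimes h_i$, the coalgebra compatibility over all $B$ collapses to the coefficient identity
\[
\sum_i a_m^{g_i} \cdot a_n^{h_i} \ = \ \tbinom{m+n}{m}\, a_{m+n}^f \qquad \text{in } k[\cN_p(\fg)],
\]
which vanishes whenever $m+n > d$. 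The crucial and delicate step is then extracting the separate conclusions $a_m^{g_i} = 0$ for $m > d$ and $a_n^{h_i} = 0$ for $n > d$, so that each $g_i, h_i$ lies in $(k[G])_{[d]}$. For this extraction I plan to use two structural features: the linear independence of $\{g_i\}$ and of $\{h_i\}$ in $k[G]$, together with the homogeneity property that each $a_n^g \in k[\cN_p(\fg)]$ is homogeneous of degree $n$ on the cone $\cN_p(\fg)$ (this follows from property (3) of Definition~\ref{str-exptype}, which implies $a_n^g(\alpha B) = \alpha^n a_n^g(B)$). The main obstacle is precisely this extraction step: the coefficient equation gives only bilinear relations, and converting them into individual vanishings requires careful use of linear independence and the graded structure.

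Once the sub-coalgebra property is established, the claim that $(k[G])_{[d]}$ is a rational $G$-submodule is immediate from Proposition~\ref{coideal}. For the sub-Hopf-algebra claim at $d = 0$, the condition $\cE_B^*(f) \in k[T]_{<1} = k$ says that $f$ is constant on the image of every 1-parameter subgroup $\cE_B: \bG_a \to G$. Such functions visibly form a unital subalgebra (a product of functions each constant along $\cE_B$ is again constant along $\cE_B$), and are closed under the antipode $S$ because $S(f) \circ \cE_B(t) = f(\cE_B(t)^{-1}) = f(\cE_B(-t))$ remains constant. Combined with the sub-coalgebra structure from the first part of the proof, this yields that $(k[G])_{[0]}$ is a sub-Hopf algebra of $k[G]$.
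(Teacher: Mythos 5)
Your opening steps coincide with the paper's own proof: both use that each $\cE_B^*:k[G]\to k[T]$ is a map of Hopf algebras and that $k[T]_{<d+1}$ is a sub-coalgebra of $k[T]$, to conclude that $(\cE_B^*\otimes\cE_B^*)(\Delta_G(f))\in k[T]_{<d+1}\otimes k[T]_{<d+1}$ for every $B\in\cN_p(\fg)$. The paper then simply asserts that the coproduct restricts to $(k[G])_{[d]}$; you have correctly located the real crux in the passage from these fiberwise statements to $\Delta_G(f)\in (k[G])_{[d]}\otimes(k[G])_{[d]}$. But your proposal only announces a plan for that step, and the plan cannot succeed as stated: the three ingredients you name --- the bilinear identities $\sum_i a_m^{g_i}a_n^{h_i}=\binom{m+n}{m}a_{m+n}^f$, linear independence of $\{g_i\}$ and $\{h_i\}$ in $k[G]$, and homogeneity of the $a_n^{(-)}$ on the cone $\cN_p(\fg)$ --- do not determine the individual vanishings. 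Concretely, take $G=SL_2$ with $p>2$, $d=0$, and $f=a+d-2$ (trace minus $2$). Since $B^2=0$ for $B\in\cN_p(\fsl_2)$ we have $\cE_B^*(f)=\mathrm{tr}(1+tB)-2=0$ for all $B$, so $f$ satisfies the degree-$0$ condition and all your coefficient identities hold (with $a_{m+n}^f=0$); yet the reduced expression $\Delta(f)=a\otimes a+b\otimes c+c\otimes b+d\otimes d-2(1\otimes1)$ has tensor factors $a,b,c,d$ with $a_1\neq0$. Thus every hypothesis your extraction step would use is satisfied while its conclusion fails, so no argument from those inputs alone can close the gap.

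What is actually needed is translation stability: a subspace $W\subset k[G]$ is a sub-coalgebra precisely when it is both a left and a right coideal, equivalently stable under left and right translation, and for $(k[G])_{[d]}$ this amounts to showing that $t\mapsto f(g\,\cE_B(t)\,g')$ has degree $\le d$ for all $g,g'\in G$. That follows from the $\Ad$-equivariance $g\,\cE_B(t)\,g^{-1}=\cE_{\Ad(g)B}(t)$ of the exponential structure, provided the defining condition is read as the vanishing of the \emph{action} of $\cE_{B*}(v_j)$ on the comodule $k[G]$ (as in Proposition \ref{alternate} applied to $M=k[G]$) rather than as the vanishing of the scalars $(\cE_{B*}(v_j))(f)$; under the literal ``preimage under $\cE^*$'' reading of Definition \ref{def:filt}, the $SL_2$ example above shows the subcoalgebra assertion itself requires this adjustment, so the difficulty you flagged is genuine and not merely technical. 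Your handling of the remaining claims is fine: the subcomodule statement does follow from Proposition \ref{coideal} once the coideal property is in hand, and your antipode check for $(k[G])_{[0]}$ (via $S(f)\circ\cE_B(t)=f(\cE_B(-t))$) supplies a verification the paper leaves implicit, the paper checking only that $(k[G])_{[0]}$ is closed under multiplication.
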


\begin{proof}
Let $f \in k[T], \ B \in \cN_p(\fg)$. 
Because $\cE_B: \bG_a \to G$ is a morphism of algebraic groups, 
$$\cE_B^*(\Delta_G(f)) \ = \ \Delta_{\bG_a}(\cE_B^*(f)) \ \in \ k[\bG_a] \otimes k[\bG_a].$$
On the other hand, if $p(T) = \cE_B^*(f) \in k[\bG_a]$ has degree $< d+1$, then
$\Delta_{\bG_a}(p(T) )$ is of the form $ \sum_i p_i(T) \otimes p_i^\prime(T)$ with each $p_i(T), \ p_i^\prime(T)$
having degree $< d+1$.  Thus, the coproduct of $k[G]$ restricts to a coproduct on $(k[G])_{[d]}.$ 

The multiplicative structure of the commutative $k$-algebra $k[G]$ restricts to a 
multiplicative structure $(k[G])_{[0]}\otimes (k[G])_{[0]} \to (k[G])_{[0]}$ (since the product of two
polynomials in $k[T]$ of degree $< 1$ is again of degree $< 1$), thereby verifying that $(k[G])_{[0]}$
is a sub-Hopf algebra of $k[G]$.
\end{proof}

We investigate the relationship between the filtration of Definition \ref{def:filt} to that of Proposition \ref{unip}.
We point out the following elementary computation: for $B \in U_N$, $exp_B(x_{i,j}) \in k[T]$
equals the $(i,j)$-th entry of the matrix $1 + BT + B^2{\cdot}T^2/s + \cdots  B^{p-1}T^{p-1}/(p-1)!$.

\begin{prop}
\label{prop:relate1}
Let $U$ be a linear algebraic group provided with a closed embedding 
 $i: U \subset U_N$ of exponential type.   For any $d > 0$, 
 $$k[U]_{<d} \ \subset \ (k[U])_{[(p-1)(d-1]}.$$
 
 On the other hand, if $ k \not= k[U]_{[0]} \subset k[U]$ (e.g., $U = U_N$ with $N > p$; see 
 Example \ref{notcompare}), then $k[U]_{[0]}$ is
 not contained in $k[U]_{<d}$ for any $d$. 
\end{prop}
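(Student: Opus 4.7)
My plan for the first inclusion is to unwind both filtrations and invoke the explicit form of the exponential. Take $f \in k[U]_{<d}$, and lift it to $\tilde f \in k[U_N]_{<d}$ via the surjection $i^*: k[U_N] \to k[U]$; so $\tilde f$ is a polynomial of total degree $<d$ in the coordinates $x_{i,j}$. For any $B \in \cN_p(\fu)$, viewing $B$ as an element of $\cN_p(\fu_N)$, the fact that $i$ is an embedding of exponential type forces the exponential $\cE_B: \bG_a \to U$ to agree with $\cE_B: \bG_a \to U_N$ after composition with $i$, whence $\cE_B^*(f) = \cE_B^*(\tilde f) \in k[T]$ may be computed inside $U_N$ via (\ref{expGLN}). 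Each $\cE_B^*(x_{i,j})$ is then the $(i,j)$-entry of $I + TB + T^2B^2/2! + \cdots + T^{p-1}B^{p-1}/(p-1)!$, a polynomial in $T$ of degree at most $p-1$. Any monomial of total degree at most $d-1$ in the $x_{i,j}$'s therefore pulls back to a product of at most $d-1$ such polynomials, of $T$-degree at most $(p-1)(d-1)$; linearity extends this bound to $\tilde f$ and hence to $f$. So $\cE_B^*(f) \in k[T]_{<(p-1)(d-1)+1}$ for every $B \in \cN_p(\fu)$, which by Definition \ref{def:filt} is exactly the condition $f \in k[U]_{[(p-1)(d-1)]}$.

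For the second assertion, my plan is to exploit the multiplicative structure of $k[U]_{[0]}$: by Proposition \ref{subcoalg}, $k[U]_{[0]}$ is a sub-Hopf algebra of $k[U]$, hence in particular closed under products. Under the hypothesis $k \neq k[U]_{[0]}$, I would pick any non-constant $f \in k[U]_{[0]}$; then $\{\,f^n : n \geq 0\,\} \subseteq k[U]_{[0]}$.

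The final step is to show this infinite family cannot be accommodated inside any single $k[U]_{<d}$. Since $U$ is a connected linear algebraic group, $k[U]$ is an integral domain; combined with $k$ being algebraically closed, this forces any element of $k[U]$ that is algebraic over $k$ to lie in $k$ itself. Hence the non-constant $f$ is transcendental over $k$, so $\{1, f, f^2, \ldots\}$ is $k$-linearly independent. On the other hand, $k[U]_{<d}$ is the image under $i^*$ of the finite-dimensional space $k[U_N]_{<d}$, and is therefore itself finite-dimensional. Consequently, for all sufficiently large $n$ the power $f^n$ lies in $k[U]_{[0]}$ but not in $k[U]_{<d}$, giving $k[U]_{[0]} \not\subset k[U]_{<d}$ for any $d$.

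I do not expect any serious obstacle: the first inclusion reduces to a routine degree count once the exponential formula is written out, while the second reduces to the finite-dimensionality of $k[U]_{<d}$ and the standard fact that in a finitely generated domain over an algebraically closed field, every non-constant element is transcendental.
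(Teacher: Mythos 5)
Your proof of the first inclusion is essentially the paper's: both lift $f$ to $\tilde f \in k[U_N]_{<d}$, use the compatibility of the exponential structures to compute $\cE_B^*(f)$ inside $U_N$ via (\ref{expGLN}), and bound the $T$-degree of a product of at most $d-1$ factors each of degree at most $p-1$. For the second assertion the paper argues more informally that a non-constant $f \in k[U]_{[0]}$ has a lift of positive degree and hence that the powers $f^n$ (which remain in the sub-Hopf algebra $k[U]_{[0]}$) have ``arbitrarily large degree''; you instead observe that $k[U]_{<d} = i^*\bigl(k[U_N]_{<d}\bigr)$ is finite dimensional while $\{1, f, f^2, \ldots\}$ is $k$-linearly independent because $k[U]$ is a domain over the algebraically closed field $k$ and $f$ is therefore transcendental. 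Your variant is slightly more careful, since membership in $k[U]_{<d}$ is a statement about \emph{all} lifts of $f^n$ and the degree of one chosen lift $\tilde f^{\,n}$ does not immediately settle this; the finite-dimensionality argument sidesteps that issue cleanly. Both routes are correct and rest on the same two pillars: the multiplicative closure of $k[U]_{[0]}$ (Proposition \ref{subcoalg}) and the boundedness of $k[U]_{<d}$.
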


\begin{proof}
In the special case $U= U_N$, $\cE_B^*(x_{i,j}), 1 \leq i < j \leq N$, 
is the polynomial in $T$ whose coefficient of $T^n$ is  $1/(n!)$ times the
$(i,j)$-th entry of $B^n$ for any $n, \ 1 \leq n < p$.   Thus, the ring homomorphism 
$\cE_B^*$ sends a polynomial in the $x_{i,j}$ 
of degree $\leq d-1$ to a polynomial in $T$ of degree $\leq (p-1)(d-1)$.  Hence $\cE_{B*}(v_j)$ applied to 
$f \in k[U_N]_{<d}$ is 0 for $j > (p-1)(d-1)$.
 For $U \subset U_N$ of exponential type, this argument restricts to $U$.  This establishes
 the  inclusion $ k[U]_{<d} \subset \ (k[U])_{[(p-1)(d-1]}.$
 
 If there exists a non-constant function $f \in k[U]_{[0]}$, then $\tilde f \in k[U_N]$ mapping to $f$ must
 have positive degree.   Thus, powers of $f$ (also in the Hopf algebra $k[U]_{[0]}$) have arbitrarily
 large degree.
 \end{proof}
 
 The following examples point out that even for $U_N$ the comparison of the filtrations
of Definition \ref{def:filt} and \ref{unip} is not entirely straight-forward.

\begin{ex}
\label{notcompare}
Let $U = U_3$ and consider $f = 2x_{1,3} - x_{1,2}x_{1,3} \in k[U_3]$.  Then for all $B \in U_3$,
the degree of $exp_B^*(f) \in k[T]$ is $\leq 1$ whereas $f \notin k[U_3]_{<2}$.

Let $U = U_N$ with $N >p$.  Then $f = \prod_{i = 1}^{N-1} x_{i,i+1}$ satisfies the property that 
$exp_B^*(f) = \prod_{i=1}^{N-1} exp_B^*(x_{i,i+1}) = 0$ for any $B$ such that $B^p$ = 0.
Therefore $f \in (k[U_N])_{0]}$.
\end{ex}

In the special case $U = U_N, \ N \leq p$, we verify that our two filtrations are equivalent.

\begin{prop}
\label{prop:relate2}
Assume that $N \leq p$.  Then
$$(k[U_N])_{[e-1]} \ \subset \ k[U_N]_{<d} \ \subset \ (k[U_N])_{[(p-1)(d-1]}$$
provided that $e(N-1) < d$.
\end{prop}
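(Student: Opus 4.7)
The second inclusion $k[U_N]_{<d} \subset (k[U_N])_{[(p-1)(d-1)]}$ is immediate from Proposition \ref{prop:relate1} applied to the identity embedding $U_N \hookrightarrow U_N$, so only the first inclusion $(k[U_N])_{[e-1]} \subset k[U_N]_{<d}$ requires work. My plan is to exploit the hypothesis $N \leq p$ to convert the problem about $T$-degrees into a problem about polynomial degrees on the Lie algebra, via the fact that the truncated exponential is a scheme isomorphism $\fu_N \to U_N$.

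Since $N \leq p$, every $B \in \fu_N$ satisfies $B^N = 0$, so $\cN_p(\fu_N) = \fu_N$ and the truncated exponential $\exp(B) = I + B + B^2/2! + \cdots + B^{N-1}/(N-1)!$ is a polynomial isomorphism $\fu_N \to U_N$ with inverse $\log(I+X) = X - X^2/2 + \cdots \pm X^{N-1}/(N-1)$. The exponential structure factors as $\cE = \exp \circ \mu$, where $\mu\colon \fu_N \times \bG_a \to \fu_N$ denotes scalar multiplication $(B,t) \mapsto tB$; dually, $\cE^* = \mu^* \circ \exp^*$, and $\mu^*\colon k[\fu_N] \to k[\fu_N][T]$ sends each coordinate $B_{i,j}$ to $T\, B_{i,j}$. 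The crucial consequence is that $\mu^*$ translates polynomial degree into $T$-degree: if $\exp^*(f) = \sum_n g_n$ with $g_n \in k[\fu_N]$ the degree-$n$ homogeneous component, then $\cE^*(f) = \sum_n g_n\, T^n$. Hence $f \in (k[U_N])_{[e-1]}$ if and only if $\exp^*(f)$ has total polynomial degree $< e$ in $k[\fu_N]$.

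It then suffices to bound the $x$-degree of $\log^*(g)$ for $g \in k[\fu_N]_{<e}$. Because $\log^*$ is an algebra homomorphism and sends each generator $B_{i,j}$ to $(\log(I+X))_{i,j}$, a polynomial in the $x_{k,\ell}$'s of total degree at most $j-i \leq N-1$, the homomorphism $\log^*$ multiplies total polynomial degree by at most $N-1$. Thus $\log^*(k[\fu_N]_{<e}) \subset k[U_N]_{\leq (e-1)(N-1)}$, and the hypothesis $e(N-1) < d$ yields $(e-1)(N-1) < d$, so $\log^*(k[\fu_N]_{<e}) \subset k[U_N]_{<d}$, completing the proof. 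The only real content is the factorization $\cE = \exp \circ \mu$, which lets the exponential-type structure translate $T$-degree into polynomial degree; everything else is degree bookkeeping under the polynomial inverse pair $\exp^*, \log^*$.
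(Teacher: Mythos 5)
Your proof is correct, but it takes a genuinely different route from the paper's. The paper proves the first inclusion in contrapositive form by a direct combinatorial construction: given $f$ of degree $D \geq d$, it introduces a notion of ``contraction'' of monomials, extracts a ``reduced'' monomial $x_{\ul e}$ of degree $e$ with $e(N-1) \geq D$, and then chooses a witness matrix $B$ supported on the positions of $x_{\ul e}$ for which the coefficient of $T^e$ in $\cE_B^*(f)$ is a nonzero polynomial in the entries $b_{i,j}$, hence nonvanishing for some $B$. Your argument replaces all of this with the observation that, for $N \leq p$, the truncated exponential $\fu_N \to U_N$ is a scheme isomorphism with polynomial inverse $\log$, that $\cE = \exp \circ \mu$ converts $T$-degree into total polynomial degree on $\fu_N$ (so $f \in (k[U_N])_{[e-1]}$ iff $\deg \exp^*(f) < e$, using that $k$ is infinite to pass from vanishing at all $B$ to vanishing as a polynomial), and that $\log^*$ multiplies degree by at most $N-1$ since $(\log(I+X))_{i,j}$ has degree at most $j-i$. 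This is cleaner and makes the role of the hypothesis $N \leq p$ completely transparent (it is exactly what makes $\exp$ and $\log$ mutually inverse truncated series and forces $\cN_p(\fu_N) = \fu_N$). What the paper's argument buys in exchange is an explicit witness: for each $f$ of degree $\geq d$ it produces a concrete $B$ with $\deg_T \cE_B^*(f) \geq e$, and its contraction machinery does not presuppose invertibility of the exponential, so it is closer in spirit to arguments one might attempt for subgroups $U \subset U_N$ where no such global polynomial inverse with controlled degree is available.
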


\begin{proof}
To prove the inclusion $(k[U_N])_{[e]} \ \subset \ k[U_N]_{<d}$ for $N \leq p$,  
for each $f \in k[U_N]$ of degree $D \geq d$ in the matrix functions $x_{i,j}$ (with $i < j$) 
we must exhibit a strictly
upper triangular matrix $B$ such that $exp_B^*(f) \in k[T]$ has degree $\geq e$.  We write
$$f \ = \ \sum_{\ul d} a_{\ul d} x_{\ul d}, \quad x_{\ul d} = \prod_{i<j} x_{i,j}^{d_{i,j}}, \quad deg(\ul d) = \sum d_{i,j}.$$
We say that a monomial $x_{\ul d^\prime}$ is a contraction of another monomial $ x_{\ul d}$ 
if $x_{\ul d^\prime}$ can be obtained from 
$x_{\ul d}$ by iterated replacement of a string of factors of the form 
$x_{i,s_1}, x_{s_1,s_2},\ldots,x_{s_\ell,j}$ by the single factor $x_{i,j}$.  We say that a monomial 
$x_{\ul e}$ appearing in $f$ (i.e., such that  $a_{\ul e} \not= 0$) is reduced for $f$ provided that each
monomial $x_{\ul d}$ of the same degree as $x_{\ul e}$ satisfying 
$d_{i,j} \not=0$ only if $e_{i,j} \not=0$  (but not necessarily appearing in $f$) has no contraction
of smaller degree appearing in $f$.

Starting with a monomial $x_{\ul D}$ appearing of $f$ of top degree $D$, we identify 
by following repeated contractions some reduced monomial $x_{\ul e}$ appearing in $f$
of degree $e$ with $e(N-1) \geq D \geq d$.  We consider matrices $B$ with the property that 
$b_{i,j} \not= 0$  only if $e_{i,j} \not= 0$ for this reduced monomial  $x_{\ul e}$ of degree $e$.   
We claim that the coefficient of $T^e$ in $exp_B^*(f)$ equals the sum
\begin{equation}
\label{reduced}
\sum_{\ul d} a_{\ul d} (\prod_{i<j}b_{i,j}^{d_{i,j}}),
\end{equation}
where the sum is taken over all monomials  $x_{\ul d}$  of $f$ of degree $e$ with 
$d_{i,j} \not= 0$ only if $e_{i,j} \not= 0$.   First, observe that for any monomial $x_{\ul d^\prime}$,
we have $exp_B^*(x_{\ul d^\prime}) = \prod (\exp_B^*(x_{i,j}))^{d_{i,j}^\prime}$.   This implies that if
$x_{\ul d^\prime}$ has degree $> e$, then the coefficient of $T^e$ in $exp_B^*(x_{\ul d^\prime})$ is 0.
Moreover, if $x_{\ul d}$ has degree $= e$, then
the coefficient of $T^e$ in $exp_B^*(x_{\ul d})$ equals 
$a_{\ul d} (\prod_{i<j}b_{i,j}^{d_{i,j}})$.   Finally, if $x_{\ul d^\prime}$ has degree $< e$,
then the coefficient of $T^e$ in $exp_B^*(x_{\ul d^\prime})$ is non-zero only if some factor
$x_{i,j}$ is non-zero on a power $B^s$ of $B$ with $s > 0$; this implies that $x_{\ul d^\prime}$
is a contraction of some monomial $x_{\ul d}$ of degree $e$ with $d_{i,j} \not=0$ only if $e_{i,j} \not=0$; 
since $x_{\ul e}$ is assumed to be reduced, this implies
that $x_{\ul d^\prime}$ does not appear in $f$.

We view (\ref{reduced}) as a polynomial in the variables $b_{i,j}$ with $(i,j)$ running through
pairs $1\leq i < j \leq N$ such that $e_{i,j} \not= 0$ for our chosen $x_{\ul e}$ for the given $f$ of
degree $D$.  Since this polynomial is not constant, we may find values for the $b_{i,j}$'s constituting
a matrix $B$ such that $exp_B^*(f)$ has non-zero coefficient of $T^e$; in other words, 
$f \notin (k[U_N])_{[e-1]}$.

The second inclusion is a special case of Proposition \ref{prop:relate1}.
\end{proof}

One consequence of the following proposition is that $(k[G])_{[d]}$ is never finite dimensional
if $G$ is a non-trivial reductive algebraic group, since for reductive $G$ the kernel ideal
of $k[G] \to k[\cU_p]$ is an infinite dimensional vector space.  Indeed, for $G$ reductive, the 
Krull dimension of $k[G]$ equals the Krull dimension of $k[\cU]$ plus the rank of $G$, and
the Krull dimension of $k[\cU]$ is greater or equal to the Krull dimension of $k[\cU_p]$.  Here,
and in the proposition below, $\cU \subset U$ is the closed subvariety of unipotent elements
and $\cU_p \subset \cU$ is the closed subvariety of elements whose $p$-th power is the identity.

\begin{prop}
\label{prop:embed}
Let $G$ be a linear algebraic group of exponential type and let $\cU_p(G) \ \subset \ G$ denote
the closed variety of $p$-unipotent elements of $G$.  Then
$\cE^*: k[G] \ \to \ k[\cN_p(\fg)] \otimes k[\bG_a]$ factors through an embedding 
\begin{equation}
\label{embed}
\ol {\cE^*}: k[\cU_p(G)] \ \hookrightarrow \  k[\cN_p(\fg)] \otimes k[\bG_a].
\end{equation}
Consequently, the augmentation ideal of $(k[G])_{[0]}$ (i.e., the functions $f \in  (k[G])_{[0]} \subset k[G]$ 
such that $f(id_G) = 0$) equals the ideal in $k[G]$ of those functions on $G$ which vanish on $\cU_p(G) \subset G$.
\end{prop}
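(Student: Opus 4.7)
The plan has three parts: (i) factor the morphism $\cE$ through the closed subscheme $\cU_p(G) \subset G$; (ii) show that the resulting comorphism $\ol{\cE^*}$ is injective by passing to a fibre over $\bG_a$; and (iii) deduce the augmentation-ideal description from (i) and (ii) together with the definition of $(k[G])_{[0]}$.

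For (i), I would check on functor of points: given any commutative $k$-algebra $R$, any $B \in \cN_p(\fg)(R)$, and any $s \in \bG_a(R) = R$, condition (1) of Definition \ref{str-exptype} ensures $\cE_B: \bG_a \to G$ is a homomorphism of group schemes, so that $\cE_B(s)^p = \cE_B(ps)$; since $R$ has characteristic $p$, this equals $\cE_B(0) = e$. Hence $\cE_B(s) \in \cU_p(G)(R)$, so $\cE$ factors as $\bar\cE: \cN_p(\fg) \times \bG_a \to \cU_p(G) \hookrightarrow G$, inducing $\ol{\cE^*}: k[\cU_p(G)] \to k[\cN_p(\fg)] \otimes k[\bG_a]$.

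For (ii), I would post-compose $\ol{\cE^*}$ with the surjection $k[\cN_p(\fg)] \otimes k[\bG_a] \twoheadrightarrow k[\cN_p(\fg)]$ that evaluates $T$ at $1$. By naturality this composition equals the comorphism of the ``exponential'' map $\cE_{-}(1): \cN_p(\fg) \to \cU_p(G), \ B \mapsto \cE_B(1)$, so it is enough to show that $\cE_{-}(1)$ is schematically dominant; this will be the main obstacle. In the model case $G = GL_N$ with its standard exponential structure (\ref{expGLN}), the map $\cE_{-}(1)$ is precisely the classical exponential $\exp$, which is an isomorphism of schemes onto $\cU_p(GL_N)$ with inverse given by the truncated logarithm $u \mapsto -\sum_{k=1}^{p-1}(1-u)^k/k$. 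For general $G$ of exponential type, I would invoke the uniqueness of the structure of exponential type \cite[1.7]{F15} to choose an embedding of exponential type $G \subset GL_N$, and then restrict the $GL_N$ identification to the pairs $\cN_p(\fg) \subset \cN_p(\gl_N)$ and $\cU_p(G) \subset \cU_p(GL_N)$.

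For (iii), writing $\cE^*(f) = \sum_{j \geq 0} a_j T^j$ with $a_j \in k[\cN_p(\fg)]$, the defining condition $f \in (k[G])_{[0]}$ becomes $a_j = 0$ for every $j > 0$, while $a_0(B) = f(\cE_B(0)) = f(e) = \epsilon(f)$ for every $B$. Thus $f$ lies in the augmentation ideal of $(k[G])_{[0]}$ precisely when $\cE^*(f) = 0$; by (i) and (ii) this condition characterises membership in $\ker\bigl(k[G] \twoheadrightarrow k[\cU_p(G)]\bigr)$, i.e., in the ideal of functions on $G$ vanishing on $\cU_p(G)$, as claimed.
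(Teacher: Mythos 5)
Your overall strategy coincides with the paper's: factor $\cE$ through $\cU_p(G)$, compose $\ol{\cE^*}$ with evaluation at $T=1$ to reduce injectivity to a statement about $B \mapsto \cE_B(1): \cN_p(\fg) \to \cU_p(G)$, and then read off the augmentation-ideal description from the vanishing of all coefficients of $\cE^*(f)$. Your steps (i) and (iii) are correct, and (i) is in fact argued more explicitly than in the paper (which simply asserts that each $\cE_B$ factors through $\cU_p$). The divergence, and the problem, is in step (ii), which you rightly single out as the main obstacle but do not actually close.

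Restricting the isomorphism $\exp: \cN_p(\gl_N) \to \cU_p(GL_N)$ to $\cN_p(\fg)$ yields a closed immersion $\cN_p(\fg) \hookrightarrow \cU_p(G)$; since its image is then closed, schematic dominance of $\cE_{-}(1)$ is equivalent to surjectivity, i.e., to the assertion that the truncated logarithm of every $p$-unipotent element of $G$ lies in $\fg$, so that $\exp(\cN_p(\fg))$ is all of $\cU_p(G)$ rather than a proper closed subvariety. Injectivity of $\cE_{-}(1)$ and the existence of the inverse on $\cU_p(GL_N)$ give you nothing in this direction; the needed surjectivity is a genuine saturation-type theorem (cf.\ \cite{Sei}) and is exactly the point your argument omits. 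A second, smaller gap: the uniqueness statement \cite[1.7]{F15} does not supply an embedding of exponential type $G \subset GL_N$ for an arbitrary $G$ of exponential type (Sobaje's reductive examples, for instance, are not presented that way), so the reduction to the $GL_N$ model is itself unjustified in general. The paper avoids both issues by deducing that $(\mathrm{id} \otimes \mathrm{eval}_1)\circ \ol{\cE^*}$ is an isomorphism directly from condition (4) of Definition \ref{str-exptype}, i.e., by treating the required correspondence between $\cN_p(\fg)$ and $p$-unipotent data as part of what it means to have a structure of exponential type; to repair your proof you should either invoke that condition as the paper does, or supply a proof that $\exp$ carries $\cN_p(\fg)$ onto $\cU_p(G)$.
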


\begin{proof}
If $f \in k[G]$ vanishes on $\cU_p(G)$ (i.e. lies in the ideal defining the closed subvariety
$\cU_p(G) \subset G$), then $\cE_B^*(f) = 0$ for all $B \in \cN_p(\fg)$ since $\cE_B:
\bG_a \to G$ factors through $\cU_p$. 
This implies that $\cE^*$ factors through $k[\cU_p(G)]$,
for if not then there exists some $f \in k[G]$ vanishing on $\cU_p(G)$ such that $\cE^*(f) 
= \sum_i f_i \otimes T^i \in k[\cN_p(\fg)]\otimes k[T]$ is non-zero;  for such an $f$ and some $i$
with $f_i \not= 0$, find $B \in \cN_p(\fg)$ such that $f_i(B) \not= 0$; then $\cE_B^*(f) \not= 0$,
contradicting the assumption that  $\cE^*(f) \not= 0$.  On
the other hand, this induced map $k[\cU_p(G)] \to k[\cN_p(\fg)] \otimes k[\bG_a]$ is injective,
for its composition with $id \otimes eval_1: k[\cN_p(\fg)] \otimes k[\bG_a] \to k[\cN_p(\fg)]$
is an isomorphism by condition (4) of Definition \ref{str-exptype}.

In particular, we have shown that the kernel of $\cE^*: k[G] \to  k[\bG_a]\otimes   k[\cN_p(\fg)] $ equals 
the kernel of the restriction map $k[G] \to k[\cU_p]$ which equals the ideal 
of those functions on $G$ which vanish on $\cU_p(G) \subset G$.  On the other hand, 
$(k[G])_{[0]}$ consists of those $f \in k[G]$ such that $\cE_B^*(f)$ is constant (i.e., lies in
$k$ for all $B$).  Therefore, the augmentation ideal of $(k[G])_{[0]}$ equals the ideal of
$\cU_p(G)$.
\end{proof}

We introduce the filtration by exponential degree on a rational $G$-module, 
an ``extension" of the degree filtration on a rational $U$-module given in 
Definition \ref{Ufilt}.  

\begin{defn}
\label{ex-filt}
Let $G$ be a linear algebraic group  of exponential
type and let $M$ be a rational $G$-module.  For any $d \geq 0$, we define
\begin{equation}
\label{Mr}
M_{[d]} \ \equiv \ \{ m\in M:  \ \Delta_M(m) \in M \otimes (k[G])_{[d]}\}.
\end{equation}

The filtration {\it by exponential degree} on $M$ is the filtration 
$$M_{[0]} \ \subset \ M_{[1]} \ \subset \ \cdots \ \subset \ M.$$
\end{defn}

We say that $M$ has {\it exponential degree $\leq d$} if $M \ = \ M_{[d]}$.  

\begin{prop}
\label{alternate}
With notation as in Definition \ref{ex-filt}, $M_{[d]} \ \subset \ M$ consists of 
those elements $m \in M$ such that $(\cE_B)_*(v_j) \in kG$ vanishes on $m$ for all
$B \in \cN_p(\fg)$ and all $j >  d$.
\end{prop}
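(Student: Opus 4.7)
The statement is essentially an unpacking of the definition of $M_{[d]}$ via the coaction-action formula (\ref{action}) and the characterization of $(k[G])_{[d]}$ in Definition \ref{def:filt}. The plan is to rewrite the condition ``$(\cE_B)_*(v_j)(m) = 0$'' as a condition on the coefficients of $\Delta_M(m)$ in a decomposition with a linearly independent first factor, then compare to the defining condition of $M_{[d]}$.

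First I would record the basic duality identity $(\cE_{B*}(\phi))(f) = \phi(\cE_B^*(f))$ for $\phi \in k\bG_a$, $f \in k[G]$, which is just the statement that $\cE_{B*}:k\bG_a \to kG$ is the linear dual of $\cE_B^*:k[G]\to k[\bG_a]$. Combined with (\ref{action}) applied to $\phi = (\cE_B)_*(v_j)$ and any decomposition $\Delta_M(m) = \sum_i m_i \otimes f_i$, this gives
\[
(\cE_B)_*(v_j)(m) \;=\; \sum_i m_i \cdot \bigl((\cE_B)_*(v_j)\bigr)(f_i) \;=\; \sum_i m_i \cdot v_j\!\left(\cE_B^*(f_i)\right).
\]
Since $v_j \in k\bG_a$ is the functional picking out the coefficient of $T^j$, the factor $v_j(\cE_B^*(f_i))$ vanishes exactly when $T^j$ has zero coefficient in the polynomial $\cE_B^*(f_i) \in k[T]$.

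For the forward direction, if $m \in M_{[d]}$ then I can choose a decomposition with $f_i \in (k[G])_{[d]}$, so by Definition \ref{def:filt} each $\cE_B^*(f_i)$ has degree $\leq d$. Plugging into the displayed formula gives $(\cE_B)_*(v_j)(m) = 0$ for all $B$ and all $j > d$. For the reverse direction, I would fix a decomposition $\Delta_M(m) = \sum_i m_i \otimes f_i$ in which the $m_i$ are linearly independent over $k$; then the assumed vanishing of $(\cE_B)_*(v_j)(m)$ for all $B \in \cN_p(\fg)$ and all $j > d$ forces $v_j(\cE_B^*(f_i)) = 0$ for every index $i$. This means each $\cE_B^*(f_i)$ has degree $\leq d$ for every $B$, so each $f_i$ lies in $(k[G])_{[d]}$ by Definition \ref{def:filt}, whence $\Delta_M(m) \in M \otimes (k[G])_{[d]}$ and $m \in M_{[d]}$.

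The only item requiring care is the use of linear independence of the $m_i$ to transport a vanishing statement about the sum to vanishing of each summand; apart from that bookkeeping, the argument is formal and there is no serious obstacle.
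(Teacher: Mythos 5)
Your proposal is correct and follows essentially the same route as the paper's proof: both unpack $\Delta_M(m)$ against the duality $(\cE_{B*}(v_j))(f) = v_j(\cE_B^*(f))$ and, for the converse, use a decomposition of $\Delta_M(m)$ whose first tensor factors are linearly independent to force each $f_i$ into $(k[G])_{[d]}$. Your explicit invocation of that linear independence is in fact a point the paper's own converse argument leaves implicit.
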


\begin{proof}
If $\Delta_M(m) \in M \otimes k[G]$ lies in $M \otimes (k[G])_{[d]}$, then the composition 
with $(1\otimes ev_B \otimes 1)\circ \cE^*: M \otimes k[G] \to M \otimes k[\cN_p(\fg)] \otimes M\otimes k[T]$ has
image in $M \otimes k[T]_{\leq d}$ and equals $\cE_B^* \circ \Delta_M(m)$; thus,
$(\cE_B)_*(v_j)$ applied to $m$ vanishes for all $B \in \cN_p(\fg)$ and all $j >  d$.

Conversely, if $\Delta_M(m) = \sum_\alpha m_\alpha \otimes f_\alpha \in M \otimes k[G]$ with 
some $f_\alpha$ of degree $> d$, then 
$$\cE^*(\Delta_M(m)) \ = \ \sum_\alpha m_\alpha \otimes \sum_j g_{\alpha,j} \otimes T^j
\in M \otimes k[\cN_p(\fg)] \otimes k[T]$$
with some $g_{\alpha,j} \not= 0$ for $j > d$.  Then for any $B\in \cN_p(\fg)$ such that
$g_{\alpha,j}(B) \not= 0$, $\cE_B^*(v_j)(m) = \sum_{\alpha,j} g_{\alpha,j}(B)m_\alpha \otimes T^j \not= 0$.
\end{proof}

\begin{remark}
\label{rem:compare}
Let $G$ be a linear algebraic group  of exponential
type and let $M$ be a rational $G$-module.  The condition that  $M$ has 
exponential degree $\leq p^r-1$ is equivalent to the condition that $M$
has exponential degree $< p^r$ in the sense of \cite[4.5]{F15}.

Take $G$ to equal $U_N$ for some $N \leq p$ and $M$ be a rational $U$-module.  
By Propositions \ref{prop:Md} and \ref{prop:relate2}, 
the condition $M = M_{<d}$ in the sense of Definition \ref{Ufilt} implies that 
$M = M_{[(p-1)(d-1)]}$ in the sense of Definition \ref{ex-filt}.  Similarly, the condition that
$M = M_{[e-1]}$ in the sense of Definition \ref{ex-filt} implies that $M = M_{<d}$
 in the sense of Definition \ref{Ufilt} provided that $e(N-1) < d$.
\end{remark}

The following are natural examples of rational $G$-modules of
(explicitly) bounded exponential degree.

\begin{ex}
\label{Schur}
Let $S(N,d)$ denote the Schur algebra, so that the linear dual of $S(N,d)$
 is the coalgebra $k[\bM_N]_d$, the vector space of polynomials
homogeneous of degree $d$ in the variables $\{ x_{i,j}, 1 \leq i,j \leq N \}$.  We verify that 
\begin{equation}
\label{schur-deg}
k[\bM_N]_d \ \hookrightarrow \ k[GL_N]_{[(p-1)d]}.
\end{equation}
Namely, if $B$ is a $p$-nilpotent, $N \times N$ matrix, then $exp_B^*(x_{i,j}) \in k[T] = k[\bG_a]$ 
is the $(i,j)$-th entry
of the matrix $1 + BT + B^2T^2/2 + \cdots + B^{p-1}T^{p-1}/(p-1)!$ which has degree $\leq p-1$
(as a polynomial in $T$).
Thus, if $f \in k[GL_N]$ is homogenous of degree $d$ in the $x_{i,j}$ (i.e., in the image of $k[\bM_N]_d$), then
$exp_B^*(f)$ has degree $\leq (p-1)d$.

Consequently, if $M$ is a polynomial representation of $GL_N$ homogeneous of
degree $d$ (i.e., a comodule for $k[\bM_N]_d$), then $M = M_{[(p-1)d]}$.
\end{ex}

\begin{ex}
\label{weights}
Let $G$ be a reductive group with a structure of exponential type and let 
$M$ be a rational $G$-module all of whose high weights $\mu$
satisfy the condition that   \ $2\sum_{j=1}^l \langle \mu,\omega_j^\vee \rangle \ < \  p^r$.
Here, $\{ \omega_1,\ldots,\omega_\ell \}$ is the set of fundamental dominant weights of $G$
(with respect to some $T \subset B \subset G$) and 
$$\omega_j^\vee = 2\omega_j/\langle \alpha_j,\alpha_j \rangle.$$
Then $M = M_{[p^r-1]}$ as seen in \cite[2.7]{F11} following \cite[4.6.2]{CLN}.
\end{ex}

We provide various properties of our filtration by exponential degree of rational $G$-modules.

\begin{thm}
\label{properties}
Let $G$ be a linear algebraic group of exponential
type and let $M$ be a rational $G$-module. 
\begin{enumerate}
\item
The abelian category of comodules for the coalgebra $(k[G])_{[d]}$ equals the
full subcategory of $(G{\text -}Mod)$ consisting of those rational $G$-modules of 
exponential degree $ \leq d$ (i.e., $M$ such that $M = M_{[d]}$).
\item
The filtration $\{ M_{[d]}, \ d  \geq 0 \}$ of $M$ is independent of the choice of structure of exponential
type for $G$.
\item
If $M$ is a finite dimensional rational $G$-module, then $M = M_{[d]}$ for $ d>> 0$.
\item  $M \ = \ \cup_d M_{[d]}$ for any rational $G$-module $M$.
\item
The  filtration of $k[G]$ by exponential degree, $\{ (k[G])_{[d]}, \ d  \geq 0 \}$, is finite if and only if
$\cN_p(\fg) = \{ 0 \}$.
\item
If $M$ has exponential degree $\leq d$, then its Frobenius 
twist $M^{(1)}$ has exponential degree $\leq pd$.
\item
If $M^\prime$ has exponential degree $\leq  d$ and if $f: M^\prime \to M$ is a map of
rational $G$-modules, then $f(M^\prime) \ \subset \ M_{[d]}$.
\item
If $f: M^\prime \hookrightarrow M$ is an inclusion of rational $G$-modules and if 
$m^\prime \in M^\prime \backslash M^\prime_{[d]}$, then $f(m^\prime) \in M \backslash M_{[d]}$.
\item
If $j: H \subset G$ is an embedding of exponential type and if $M$ has exponential 
degree $\leq d$ as a rational $G$-module, then the restriction to $H$ of
$M$ has exponential degree $\leq  d$ as a rational $H$-module.
\end{enumerate}
\end{thm}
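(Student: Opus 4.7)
My plan is to derive all nine items from the basic relation $M_{[d]} = \Delta_M^{-1}(M \otimes (k[G])_{[d]})$ together with the sub-coalgebra property of $(k[G])_{[d]}$ from Proposition \ref{subcoalg}. Most items are formal once one tracks how the coproduct interacts with maps.

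I would first group the structural items (1), (7), (8), and (9). Item (1) is essentially a tautology using that $(k[G])_{[d]}$ is a sub-coalgebra of $k[G]$: the condition $M = M_{[d]}$ means that $\Delta_M$ factors through $M \otimes (k[G])_{[d]}$, which is precisely what it means for $M$ to be a $(k[G])_{[d]}$-comodule; conversely any such comodule becomes a rational $G$-module via the inclusion, and equals its own $M_{[d]}$. Items (7) and (8) then follow from naturality of the coproduct along a $G$-map $f: M' \to M$, with (8) requiring the observation that injectivity of $f$ forces $(f \otimes 1)^{-1}(M \otimes (k[G])_{[d]}) = M' \otimes (k[G])_{[d]}$, which one sees by fixing a basis of $k[G]$ extending a basis of $(k[G])_{[d]}$. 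Item (9) uses that an exponential-type embedding $j: H \subset G$ satisfies $\cE_H = \cE_G|_{\cN_p(\fh)}$, so for $B \in \cN_p(\fh)$ we have $\cE_{H,B}^*(j^*f) = \cE_{G,B}^*(f)$; hence $j^*$ carries $(k[G])_{[d]}$ into $(k[H])_{[d]}$, and the restricted coproduct $(1 \otimes j^*) \circ \Delta_M$ lands in $M \otimes (k[H])_{[d]}$.

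Next I would handle the size assertions (3), (4), (5), and (6). For (4), and hence (3) by taking finite-dimensional modules, I would pick $m \in M$ and write $\Delta_M(m)$ as a finite sum $\sum_i m_i \otimes f_i$; each $\cE^*(f_i)$ is a single element of $k[\cN_p(\fg)] \otimes k[\bG_a]$, hence has finite expansion in $T$, so each $f_i$ lies in $(k[G])_{[d]}$ for $d$ sufficiently large. For (5), if $\cN_p(\fg) = 0$ the defining vanishing condition in Definition \ref{def:filt} is vacuous and $(k[G])_{[0]} = k[G]$; conversely any non-zero $B \in \cN_p(\fg)$ gives a non-trivial one-parameter subgroup $\cE_B$ whose pullback $\cE_B^*: k[G] \to k[T]$ has image containing polynomials of arbitrarily large $T$-degree, producing functions in $k[G]$ of arbitrarily large exponential degree. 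Item (6) is settled by observing that the Frobenius twist's coproduct uses the $p$-th power map on $k[G]$, and $\cE_B^*(f^p) = \cE_B^*(f)^p$ has $T$-degree multiplied by $p$ (in characteristic $p$), so $f \in (k[G])_{[d]}$ implies $f^p \in (k[G])_{[pd]}$.

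The main obstacle is (2), independence of the choice of structure of exponential type. My plan is to combine Proposition \ref{prop:embed}, which embeds $k[\cU_p(G)]$ into $k[\cN_p(\fg)] \otimes k[\bG_a]$, with the uniqueness result \cite[1.7]{F15} to see that two structures $\cE$ and $\cE'$ differ by an automorphism of $\cN_p(\fg) \times \bG_a$ that preserves the $T$-grading and intertwines $\cE^*$ with $(\cE')^*$. This will imply they define the same subspaces $(k[G])_{[d]} \subset k[G]$, and hence the same filtrations $\{M_{[d]}\}$ on every rational $G$-module.
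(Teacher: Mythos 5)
Your proposal is correct and, for items (1), (2), (5), (6), (7), (8), and (9), follows essentially the same route as the paper: (1) is treated as a tautology of Definition \ref{ex-filt}, (2) is reduced to the uniqueness statement \cite[1.7]{F15}, (5) turns on a nonzero $B$ yielding a nontrivial $1$-parameter subgroup whose pullback on coordinate algebras has infinite-dimensional image, (6) uses $\psi^*(f^p)=(\psi^*(f))^p$, and (7)--(9) are the same comodule-map and commutative-square arguments (your explicit basis argument for (8) merely spells out what the paper dismisses as an easy consequence of $f$ being a map of $k[G]$-comodules). The one genuine divergence is in (3) and (4): the paper cites \cite[2.6]{F15} for the finite-dimensional statement (3) and then deduces (4) by writing $M$ as the union of its finite-dimensional submodules, whereas you prove (4) directly --- for each $m$ the coproduct $\Delta_M(m)=\sum_i m_i\otimes f_i$ is a finite sum, each $\cE^*(f_i)$ has bounded degree in $T$, hence each $f_i$ lies in some $(k[G])_{[d]}$ and $m\in M_{[d]}$ for $d$ large --- and then obtain (3) as the special case of a finite basis. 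Your order of deduction is self-contained and arguably more elementary, at the cost of not tying (3) to the earlier reference; both routes are valid, and the only cosmetic imprecision in yours is calling the defining condition in Definition \ref{def:filt} ``vacuous'' when $\cN_p(\fg)=\{0\}$ (the element $B=0$ still occurs, but the condition is automatically satisfied since $\cE_0$ is the trivial $1$-parameter subgroup).
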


\begin{proof}
Property (1) is merely a rephrasing of the condition that a rational $G$-module $M$ satisfies the 
condition $M = M_{[d]}$.

Property (2) follows from  \cite[1.7]{F15}; property (3) is established in  \cite[2.6]{F15}.  Since
any rational $G$-module is a union of its finite dimensional submodules, property (4) follows
from property (3).

If $\cN_p(\fg) = 0$, then by condition (4) of Definition \ref{str-exptype} there are no non-trivial
1-parameter subgroups $\bG_a \to G$ so that $k[G] = k[G]_{[0]}$.  Conversely, 
If $\psi: \bG_a \to G$ is a non-trivial 1-parameter subgroup, then $\psi^*: k[G] \to k[\bG_a]$
has infinite dimensional image so that for each $d > 0$ there exist $f \in k[G]$ which do not 
lie in $k[G]_{[d]}$.  

The Frobenius twist of a rational $G$-module $M$, $M^{(1)}$,
 has as its coproduct structure $\Delta_{M^{(1)}}: M^{(1)} \to M^{(1)} \otimes k[G]$
the composition 
$$(1_M \otimes F) \circ (\Delta_M)^{(1)}: M^{(1)} \to M^{(1)} \otimes k[G]^{(1)} \to M^{(1)} \otimes k[G]$$
where $F: k[G]^{(1)} \to k[G]$ is the $k$-linear map sending
$\alpha\otimes f \in k\otimes_\phi k[G]$ to $\alpha f^p \in k[G]$.
For any 1-parameter subgroup $\psi: \bG_a \to G$, $\psi^*(f^p) = (\psi^*(f))^p \in k[\bG_a]$,
so that the image under $F: k[G]^{(1)} \to k[G]$  of $(k[G])_{[d]})^{(1)}$ lies in $(k[G])_{[pd]}$,
thereby establishing property (6).

Properties (7) and (8) are easy consequences Definition \ref{ex-filt} and
the fact that a map $f: M^\prime \to M$ is a map of $k[G]$-comodules.

Finally, the condition that  $j: H \subset G$ be an embedding of 
exponential type (see Definition \ref{str-exptype})  implies the commutativity of
the square
\begin{equation}
\label{comm}
\xymatrix{
k[G] \ar[d]_{j^*} \ar[r]^-{\cE^*} & k[\cN_p(\fg)] \otimes k[\bG_a] \ar[d]^{j^* \otimes 1} \\
k[H] \ar[r]_-{\cE^*} & k[\cN_p(\fh)] \otimes k[\bG_a]. }
\end{equation}
The surjectivity of $j^*$ together with the commutativity of (\ref{comm}) implies that $j^*$
restricts to $j_d^*: (k[G])_{[d]} \to (k[H])_{[d]}$.  Thus, if the coproduct $\Delta_M: M \to M \otimes k[G]$
for the rational $G$-module $M$ factors through $M \otimes (k[G])_{[d]}$, then the coproduct
for $M$ restricted to $H$ factors through $M \otimes (k[H])_{[d]}$.  
\end{proof}

A key definition of \cite{F15} is that of the ($p$-nilpotent) action at a 1-parameter subgroup 
$\cE_{\ul B}: \bG_a \to G$ of a linear algebraic group $G$ of exponential type acting on a
rational $G$-module $M$.  In \cite[2.6.1]{F15}, this is defined to be the action of 
$\sum_{s \geq 0} (\cE_{B_s})_*(u_s) \ \in \ kG$ acting on $M$.   

\begin{defn}\cite[4.4]{F15}
\label{ratsupp}
Let $G$ of a linear algebraic group $G$ of exponential type and $M$ a rational $G$-module.
Then the {\bf support variety} of $M$, $V(G)_M \subset V(G)$, is the subvariety of those 1-parameter
subgroups  $\cE_{\ul B}: \bG_a \to G$ at which the action of $G$ on $M$ is not free (in the sense
that $M$ is not free $k[t]/t^p$-module with $t$ acting as $\sum_{s \geq 0} (\cE_{B_s})_*(u_s)$).
\end{defn}

For any $\ul B \in \cC_\infty(\cN_p(\fg))$ and any $r > 0$, we set $\Lambda_r(\ul B)$
equal to $(B_{r-1}, B_{r-2}, \ldots,B_0)$.  Thus, $\cE_{\Lambda_r(\ul B)}: \bG_a \to G$
can be viewed as an infinitesimal 1-parameter subgroup $\bG_{a(r)} \to G_{(r)}$.  As shown
in \cite[4.3]{F15}, the $\pi$-points $k[u]/u^p \to kG_{(r)}$ given by sending $u$ to
$\sum_{s=0}^{r-1} (\cE_{B_s})_*(u_s)$ and to $(\cE_{\Lambda_r(\ul B)})_*(u_{r-1})$ are 
equivalent.   One consequence of this equivalence of $\pi$-points is the close 
relationship between support varieties as defined in Definition \ref{ratsupp} for a linear
algebraic group $G$ and the support varieties for the Frobenius kernels $G_{(r)}$ as defined
for any finite group scheme.    This enables the proof (given in \cite[4.6.1]{F15}) of the 
following proposition giving a consequence involving support varieties of the hypothesis
that a rational $G$-modules has exponential degree $< p^r$.

\begin{prop} (\cite[4.6.1]{F15})
\label{Gsupp}
Let $G$ be a linear algebraic group   of exponential type 
and let $M$ be a rational $G$-module such that $M = M_{[d]}$.   If $p^r > d$
(so that $M$ has exponential degree $< p^r$), then
the support variety $V(G)_M$ of $M$ satisfies 
$$V(G)_M \ = \ \Lambda_r^{-1}(V_r(G)_M).$$
Here, $\Lambda_r: \cC_\infty(\cN_p(\fg)) \to \cC_r(\cN_p(\fg))$ sends $\ul B$ to
$(B_{r-1}, B_{r-2}, \ldots,B_0)$. 
\end{prop}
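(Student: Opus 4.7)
The plan is to translate the condition defining $V(G)_M$ into a statement about the Frobenius kernel $G_{(r)}$ and then invoke the equivalence of $\pi$-points recalled from \cite[4.3]{F15} to match this with the condition $\Lambda_r(\ul B)\in V_r(G)_M$. The essential input is the exponential-degree hypothesis $M=M_{[d]}$ with $p^r>d$: since $u_s=v_{p^s}$ and $p^s\geq p^r>d$ whenever $s\geq r$, Proposition \ref{alternate} guarantees that $(\cE_B)_*(u_s)$ acts trivially on $M$ for every $B\in\cN_p(\fg)$ and every $s\geq r$.

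First, I would observe that for any $\ul B\in\cC_\infty(\cN_p(\fg))$, the action of $\sum_{s\geq 0}(\cE_{B_s})_*(u_s)$ on $M$ coincides with the action of its truncation $\theta_{\ul B}\equiv\sum_{s=0}^{r-1}(\cE_{B_s})_*(u_s)$; this is immediate from the previous paragraph since the summands with $s\geq r$ kill $M$. In particular, $M$ is free as a $k[t]/t^p$-module (via $t\mapsto\sum_s(\cE_{B_s})_*(u_s)$) if and only if it is free via $\theta_{\ul B}$. Moreover, $\theta_{\ul B}$ lies in $kG_{(r)}\subset kG$, so this freeness question is really a question about $M|_{G_{(r)}}$.

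Second, I would invoke the equivalence of $\pi$-points from \cite[4.3]{F15}: the maps $u\mapsto\theta_{\ul B}$ and $u\mapsto(\cE_{\Lambda_r(\ul B)})_*(u_{r-1})$ define equivalent $\pi$-points $k[u]/u^p\to kG_{(r)}$. By the general theory of $\pi$-points for finite group schemes, equivalent $\pi$-points detect the same non-projective $kG_{(r)}$-modules, hence $M|_{G_{(r)}}$ is non-free via $\theta_{\ul B}$ if and only if it is non-free via $(\cE_{\Lambda_r(\ul B)})_*(u_{r-1})$. The latter condition is, by definition, the statement that $\Lambda_r(\ul B)\in V_r(G)_M$.

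Combining the two steps, $\ul B\in V(G)_M$ if and only if $\Lambda_r(\ul B)\in V_r(G)_M$, yielding the asserted equality $V(G)_M=\Lambda_r^{-1}(V_r(G)_M)$. The main obstacle is verifying that $\pi$-point equivalence genuinely preserves non-freeness of the specific module $M|_{G_{(r)}}$ (and not merely some coarser support-theoretic invariant); this is precisely the content imported from the theory of $\pi$-points recalled in \cite[4.3]{F15}, so rather than reproving it one leans on that reference. The rest of the argument is a bookkeeping translation between the full $\ul B$ and its truncation $\Lambda_r(\ul B)$, made possible by the exponential-degree bound.
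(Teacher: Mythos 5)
Your argument is correct. Note, though, that the paper's own ``proof'' of Proposition \ref{Gsupp} is essentially a citation: it observes via Remark \ref{rem:compare} that $M=M_{[d]}$ with $p^r>d$ is the condition ``exponential degree $<p^r$'' of \cite[4.5]{F15} and then defers entirely to \cite[4.6.1]{F15}. What you have done is reconstruct the argument behind that cited result, and your reconstruction is the intended one: the degree hypothesis, read through Proposition \ref{alternate} together with $u_s=v_{p^s}$, kills the summands $(\cE_{B_s})_*(u_s)$ with $s\geq r$, so the operator defining membership of $\ul B$ in $V(G)_M$ agrees on $M$ with its truncation $\sum_{s=0}^{r-1}(\cE_{B_s})_*(u_s)\in kG_{(r)}$; the equivalence of $\pi$-points recalled before Definition \ref{ratsupp} (from \cite[4.3]{F15}) then transports non-freeness to $(\cE_{\Lambda_r(\ul B)})_*(u_{r-1})$, which is the defining condition for $\Lambda_r(\ul B)\in V_r(G)_M$. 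You are right to flag that the only genuinely nontrivial imported fact is that equivalent $\pi$-points detect projectivity of the \emph{same} module (not merely the same support variety), and for possibly infinite-dimensional $M$ this is the content of the $\pi$-point machinery of \cite{FP2}; leaning on the reference there is appropriate. In short: same mathematics as the source the paper cites, but your write-up makes explicit the two steps (truncation and $\pi$-point equivalence) that the paper leaves implicit.
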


\begin{proof}
This follows immediately from Remark \ref{rem:compare} and Proposition 4.6.1 of \cite{F15}.
\end{proof}

The following simple example makes clear that the condition  that $M = M_{[d]}$
 is not equivalent to some condition on the support variety of $M$.  Conceptually, the support
variety of $M$ is the locus of 1-parameter subgroups $\psi$ at which the $p$-nilpotent action 
at $\psi$  is {\it not free}, whereas the
condition $ M  = M_{[d]}$  is the condition on the {\it triviality of the action} 
of $\cE_{B*}(v_j)$ on $M$
for all $B \in \cN_p(\fg)$ and all $j > d$.

\begin{ex}
Consider the 2-dimensional rational $\bG_a$-module $Y_R$ with basis $\{ v,w\}$ whose
$k\bG_a$-module structure is given by $u_s(w) = 0, \forall s \geq 0$; $u_s(v) = w,  s \leq R, \
u_s(v) = 0, \forall s > R$.  If $p > 2$, then $V(\bG_a)_{Y_R} = V(\bG_a) = \bA^\infty$ 
for each $R > 0$.  On the other hand, 
$Y_R$ has exponential degree $\leq R$ but does not have exponential degree $\leq R-1$ for each $R> 0$.
\end{ex}


\section{Mock Injectives, mock trivials, and the functor $(-)_{[d]}$}
\label{sec:four}

We begin this final section with a list of properties for the ``filtration by
exponential degree" functor $(-)_{[d]}:  (G\mbox{-}Mod) \to ((k[G])_{[d]}{\text -}coMod)$.
These enable us to extend the rational injectivity criterion for rational $G$-modules with
$G$ a unipotent algebraic group (given in Proposition \ref{Ucrit}) to rational 
$G$-modules for $G$ an arbitrary linear algebraic
group of exponential type.

We then briefly consider two new classes of rational $G$-modules: those
that are ``mock injective" (have trivial support varieties) and those that are
``mock trivial" (those with trivial $p$-unipotent action).     It is relatively easy
to show the existence of mock injectives and mock trivials, more difficult to
construct specific examples and study general properties of these classes. 
We conclude with a brief consideration of the right derived derived 
functors of the filtration functors $(-)_{[d]}$,
observing that they occur on the $E_2$-page of a Grothendieck spectral sequence
converging to rational cohomology.

\begin{prop}
\label{coMod}
Let $G$ be a linear algebraic group  of exponential type
and $d \geq 0$ a non-negative integer.
\begin{enumerate}
\item
The natural embedding  $\iota_d: ((k[G])_{[d]}{\text -}coMod) \ \subset \ (G{\text -}Mod)$
is exact and fully faithful. 
\item
The functor $(-)_{[d]}:  (G{\text -}Mod) \to ((k[G])_{[d]}{\text -}coMod), \ M \mapsto M_{[d]}$
is left exact and idempotent (in the sense that $(-)_{[d]} = (-)_{[d]}\circ \iota_d \circ (-)_{[d]}$).
\item
The natural embedding  $\iota_d: ((k[G])_{[d]}{\text -}coMod) \ \subset \ (G{\text -}Mod)$ 
is left adjoint to $(-)_{[d]}.$ 
\item
The category $((k[G])_{[d]}{\text -}coMod) $ has enough injectives; in other words,
for every rational $G$-module $M$ of exponential degree $\leq  d$, there exists an
inclusion of rational $G$-modules $M \hookrightarrow L$ of exponential degree $\leq d$
with $L$ an injective object of $((k[G])_{[d]}{\text -}coMod)$.
\end{enumerate}
\end{prop}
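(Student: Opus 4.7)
The plan is to establish the four items in the order stated, deducing (4) from (1)--(3) via the categorical principle that a right adjoint to an exact functor preserves injective objects. Item (1) is nearly tautological: the subcategory $((k[G])_{[d]}{\text -}coMod)$ is defined as a full subcategory of $(G{\text -}Mod)$, and exactness of $\iota_d$ follows from Proposition \ref{subcoalg}, which tells us $(k[G])_{[d]} \subset k[G]$ is a sub-coalgebra. Indeed, a sequence of $(k[G])_{[d]}$-comodules is exact iff the underlying $k$-vector-space sequence is exact, which in turn coincides with exactness as rational $G$-modules.

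For (2), I would view $M_{[d]}$ as a kernel: by Definition \ref{ex-filt} it is the preimage $\Delta_M^{-1}(M \otimes (k[G])_{[d]})$, equivalently the kernel of the composite $M \to M \otimes k[G] \to M \otimes (k[G]/(k[G])_{[d]})$. Left exactness of $(-)_{[d]}$ then reduces to the fact that kernels commute with kernels; in elementary terms, for an injection $f: M' \hookrightarrow M$ one has $M'_{[d]} = f^{-1}(M_{[d]})$ by functoriality of the coproduct, which is the content of Theorem \ref{properties}(8). Idempotence $M_{[d]} = (M_{[d]})_{[d]}$ is immediate since $\Delta_{M_{[d]}}$ already factors through $M_{[d]} \otimes (k[G])_{[d]}$. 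For (3), the desired adjunction
$$\Hom_{(G\text{-}Mod)}(\iota_d N, M) \ \cong \ \Hom_{((k[G])_{[d]}{\text -}coMod)}(N, M_{[d]})$$
sends $f: N \to M$ (with $N = N_{[d]}$) to its unique factorization through $M_{[d]}$, which exists because Theorem \ref{properties}(7) forces $f(N) \subset M_{[d]}$; the inverse post-composes with the inclusion $M_{[d]} \subset M$.

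For (4), the conclusion is formal once (1)--(3) are in hand. Since $\iota_d$ is an exact left adjoint, its right adjoint $(-)_{[d]}$ carries injective objects of $(G{\text -}Mod)$ to injective objects of $((k[G])_{[d]}{\text -}coMod)$. The ambient category $(G{\text -}Mod)$ has enough injectives, since for any rational $G$-module $L$ the coproduct $\Delta_L: L \hookrightarrow L \otimes k[G]$ embeds $L$ into the rationally injective module $L \otimes k[G]$. Given $M$ of exponential degree $\leq d$, choose an embedding $M \hookrightarrow L$ with $L$ rationally injective; then by functoriality (Theorem \ref{properties}(8)) one has $M = M_{[d]} \hookrightarrow L_{[d]}$, and $L_{[d]}$ is injective in $((k[G])_{[d]}{\text -}coMod)$ as required.

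I do not anticipate a substantive obstacle: Proposition \ref{subcoalg} supplies the sub-coalgebra structure on $(k[G])_{[d]}$, and Theorem \ref{properties} parts (7)--(8) furnishes the functoriality needed for (2) and (3). The one subtle point worth flagging is that $(-)_{[d]}$ is genuinely only left exact: a surjection $M \twoheadrightarrow M''$ need not restrict to a surjection $M_{[d]} \to M''_{[d]}$, since elements of $M''$ of exponential degree $\leq d$ may lift only to elements of $M$ of strictly larger exponential degree. This asymmetry between $\iota_d$ (which is exact) and $(-)_{[d]}$ (which is merely left exact) is precisely why (4) concerns injectives rather than projectives.
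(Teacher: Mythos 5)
Your proposal is correct and follows essentially the same route as the paper's own proof: fully faithfulness and the factorization for the adjunction come from Theorem \ref{properties}(7), left exactness and idempotence are read off from the definition of $(-)_{[d]}$ as a preimage under the coproduct, and (4) is deduced formally by embedding $M$ into a rational injective $I$ and using that the exact left adjoint $\iota_d$ forces $I_{[d]}$ to be injective in $((k[G])_{[d]}{\text -}coMod)$. The only difference is that you spell out details the paper leaves implicit (e.g.\ realizing $M_{[d]}$ as a kernel and noting that exactness of comodule sequences is detected on underlying vector spaces), which is harmless.
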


\begin{proof}
The fact that $\iota_d$ is fully faithful follows from Theorem \ref{properties}(7); the exactness
statement of Property (1) is clear.  The idempotence of Property (2) follows directly from the definition of the 
category $((k[G])_{[d]}{\text -}coMod)$ (see Theorem \ref{properties}(1)).  The left exactness
of Property (2)  is an immediate consequence of the definition of $(-)_{[d]}$.  Property (3)
is proved exactly as the adjoint property is proved in Proposition \ref{iota}.

To prove Property (4), recall that $(G{\text -}Mod)$ has enough injectives.  If $M$ is a rational
$G$-module of exponential degree $\leq  d$ and if $j: M \to I$ is an embedding of
$M$ into a rationally injective $G$-module $I$, then $j$ factors through $L \equiv I_{[d]}$
(by Theorem \ref{properties}(7)).  Since $\iota_d$ is an exact left adjoint to
$(-)_{[d]}$, $L$ is an an injective object of $((k[G])_{[d]}{\text -}coMod)$.
\end{proof}

The following necessary and sufficient criterion for rational injectivity is an extension of
Proposition \ref{Ucrit}.

\begin{prop}
\label{Gcrit}
Let $G$ be a linear algebraic group  of exponential type.
Then the following are equivalent for a rational $G$-module $L$.
\begin{enumerate}
\item
$L$ is rationally injective.
\item
For each $d \geq 0$, $L_{[d]}$ is injective in $((k[G])_{[d]}{\text-}coMod)$.
\item
For some strictly increasing sequence of non-negative integers $\{ d_i, \ i \geq 0 \}$,
$L_{[d_i]}$ is injective in $((k[G])_{[d_i]}{\text-}coMod)$ for all $i \geq 0$.
\end{enumerate}
\end{prop}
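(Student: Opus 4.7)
The plan is to prove the cycle of implications $(1) \Rightarrow (2) \Rightarrow (3) \Rightarrow (1)$, with the last being the substantive step.

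For $(1) \Rightarrow (2)$: Proposition \ref{coMod}(3) exhibits $\iota_d$ as a left adjoint to $(-)_{[d]}$, and Proposition \ref{coMod}(1) asserts that $\iota_d$ is exact. Hence the right adjoint $(-)_{[d]}$ preserves injectives, so $L_{[d]}$ is injective in $((k[G])_{[d]}{\text -}coMod)$ whenever $L$ is rationally injective. The implication $(2) \Rightarrow (3)$ is immediate by taking $d_i = i$.

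For $(3) \Rightarrow (1)$, I would argue via the extension criterion for injectivity in the Grothendieck abelian category $(G{\text -}Mod)$: given any monomorphism $\phi: A \hookrightarrow B$ of rational $G$-modules and any map $f: A \to L$, the goal is to produce an extension $\tilde f: B \to L$. The filtration functors are left exact (Proposition \ref{coMod}(2)) and functorial, so $\phi$ restricts to monomorphisms $A_{[d_i]} \hookrightarrow B_{[d_i]}$ and $f$ to $f_{[d_i]}: A_{[d_i]} \to L_{[d_i]}$ for each $i$. The key construction is inductive: build a compatible family $g_i: B_{[d_i]} \to L_{[d_i]}$ satisfying $g_i|_{A_{[d_i]}} = f_{[d_i]}$ and $g_i|_{B_{[d_{i-1}]}} = g_{i-1}$. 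The base case uses the injectivity of $L_{[d_0]}$ in $((k[G])_{[d_0]}{\text -}coMod)$ applied to $A_{[d_0]} \hookrightarrow B_{[d_0]}$. For the inductive step, the maps $g_{i-1}$ (composed with $L_{[d_{i-1}]} \hookrightarrow L_{[d_i]}$) and $f_{[d_i]}$ agree on $A_{[d_i]} \cap B_{[d_{i-1}]} = A_{[d_{i-1}]}$ (using functoriality of the filtration to identify $f_{[d_i]}|_{A_{[d_{i-1}]}}$ with $f_{[d_{i-1}]}$, and the inductive hypothesis to identify $g_{i-1}|_{A_{[d_{i-1}]}}$ with $f_{[d_{i-1}]}$), so they glue to a well-defined $(k[G])_{[d_i]}$-comodule map on the subcomodule $A_{[d_i]} + B_{[d_{i-1}]} \subset B_{[d_i]}$; one then invokes injectivity of $L_{[d_i]}$ in $((k[G])_{[d_i]}{\text -}coMod)$ to extend this map to all of $B_{[d_i]}$. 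Finally, since the strictly increasing sequence $\{d_i\}$ is cofinal in $\mathbb{N}$, Theorem \ref{properties}(4) yields $B = \bigcup_i B_{[d_i]}$ and $L = \bigcup_i L_{[d_i]}$, so the colimit $\tilde f := \varinjlim_i g_i : B \to L$ is the desired extension of $f$.

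The main obstacle I anticipate is the verification of compatibility for the gluing step, which requires carefully tracking that the restrictions of $g_{i-1}$ and $f_{[d_i]}$ to $A_{[d_{i-1}]}$ coincide with $f_{[d_{i-1}]}$. This argument is substantially more intricate than the unipotent case of Proposition \ref{Ucrit}, since Proposition \ref{H0} is no longer available to provide a candidate embedding $L \hookrightarrow L_0 \otimes k[G]$; rather than proving a fixed map is an isomorphism on each filtered piece, one must inductively construct the extension with explicit compatibility across levels.
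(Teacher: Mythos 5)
Your proposal is correct and follows essentially the same route as the paper: $(1)\Rightarrow(2)$ via the exact left adjoint $\iota_d$, and $(3)\Rightarrow(1)$ by inductively extending over the subcomodules $A_{[d_i]} + B_{[d_{i-1}]}$ using the injectivity of $L_{[d_i]}$ in $((k[G])_{[d_i]}\text{-}coMod)$ and passing to the colimit via Theorem \ref{properties}(4). The only (harmless) difference is that you extend $g_i$ to all of $B_{[d_i]}$ rather than just to $A_{[d_i]}+B_{[d_{i-1}]}$ as the paper does, and you spell out the gluing compatibility on $A_{[d_i]} \cap B_{[d_{i-1}]} = A_{[d_{i-1}]}$ that the paper leaves implicit.
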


\begin{proof}
If $L$ is injective, then Proposition \ref{coMod}(4) implies that $L_{[d]} \subset L$ 
is an injective object of $((k[G])_{[d]}{\text -}coMod)$.  Namely, this is a formal 
consequence of the fact that $(-)_{[d]}$ has an exact left adjoint.  Thus, condition (1)
implies condition (2) which clearly implies condition (3).

Assume now that the rational $G$-module has the property that each
$L_{[d_i]} \subset L$ is an injective object of $((k[G])_{[d_i]}{\text -}coMod)$ for all $i \geq 0$.  
Let $M^\prime \to M$ be an inclusion of rational $G$-modules and observe that 
$(M^\prime)_{[d_i]} = M^\prime \cap M_{[d_i]}$; let $f^\prime: M^\prime \to L$ be a map of rational
$G$-modules.  Set $N_i \ = \ (M^\prime)_{[d_i]} + M_{[d_{i-1}]} \ \subset \ M_{[d_i]}$ (with $N_{-1} = 0$).
We inductively define $f_i: N_i \to L_{[d_i]}$ extending $f_{[d_i]}^\prime + f_{i-1}$ using the 
injectivity of $L_{[d_i]}$ as an object of $((k[G])_{[d_i]}{\text -}coMod)$.  Using
Theorem \ref{properties}(4), we define
$f: M \to L$ extending $f^\prime$ to be $\varinjlim_i f_i: M = \varinjlim_i N_i \to L$.
\end{proof}

\begin{defn}
\label{mockinj}
Let $G$ be a connected, linear algebraic group and $M$ a rational $G$-module.
Then $M$ is said to be {\bf mock injective} if the restriction of
$M$ to each Frobenius kernel $G_{(r)}$ is injective.

In particular, if $G$ is a linear algebraic group  of exponential 
type, a rational $G$-module $M$ is mock injective if and only if 
$V(G)_M = 0$ (by \cite[6.1]{F15}).
\end{defn}

The following proposition contrasts the behavior of injectives and mock injectives.
The first statement is merely a restatement of Theorem 4.3 and Corollary 4.5 of \cite{CPS77}.
We follow the terminology of \cite{CPS77} by saying that the algebraic group $G$ is 
reductive if and only if its connected component $G^0$ is a central product of a 
torus and a connected, semi-simple algebraic group.

\begin{prop}
\label{prop:induction}
Let $G$ be a linear algebraic group and $H$ a closed subgroup.
\begin{enumerate}
\item
The regular representation $k[G]$ of $G$ when restricted to $H$ is rationally
injective if and only if $G/H$ is an affine variety.   In particular, if $G$ is reductive,
then $k[G]$ is rationally injective as a rational $H$ module if and only if 
$H$ is reductive.
\item
$k[G]$ is always a mock injective $H$-module.
\item
If $H$ is unipotent and $M$ is a rationally injective $H$-module,
then $ind_H^G(M)$ is mock injective.
\end{enumerate}
\end{prop}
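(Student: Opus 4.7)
The plan is to treat the three parts in sequence. Part (1) is a direct restatement of \cite[Theorem 4.3, Corollary 4.5]{CPS77} and so requires only invoking those results.

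For part (2), I would follow the argument used in the proof of Proposition \ref{fail}. Since $k[G]$ is rationally $G$-injective, its restriction to $G_{(r)}$ is injective, equivalently projective, as a $kG_{(r)}$-module (using that $kG_{(r)}$ is a Frobenius algebra). Because $kG_{(r)}$ is free as a $kH_{(r)}$-module for any closed embedding $H \subset G$ (the finite-dimensional Hopf algebra case of Nichols--Zoeller; see, e.g., Jantzen I.8), restriction along $kH_{(r)} \hookrightarrow kG_{(r)}$ preserves projectivity. Hence $k[G]|_{H_{(r)}}$ is $kH_{(r)}$-projective and therefore $kH_{(r)}$-injective.

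For part (3), my approach is to reduce $\ind_H^G(M)$ to the shape $V \otimes k[G]$ via Proposition \ref{H0}. The existence assertion of Proposition \ref{H0} produces a map $f : M \to M^H \otimes k[H]$ of rational $H$-modules whose restriction to $M^H \subset M$ is the inclusion $m \mapsto m \otimes 1$. Consequently $H^0(f)$ is the identity on $M^H$ (since $H^0(H, M^H \otimes k[H]) = M^H \otimes k = M^H$), so the second assertion of Proposition \ref{H0} forces $f$ to be an isomorphism, yielding $M \cong M^H \otimes k[H]$ as rational $H$-modules. Applying $\ind_H^G$, the tensor identity $\ind_H^G(V|_H \otimes N) \cong V \otimes \ind_H^G(N)$ with $V = M^H$ carrying the trivial $G$-action, together with the standard isomorphism $\ind_H^G(k[H]) \cong k[G]$, yields $\ind_H^G(M) \cong M^H \otimes k[G]$ as $G$-modules. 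Restricting to $G_{(r)}$ presents this as a direct sum of copies of the $kG_{(r)}$-injective module $k[G]|_{G_{(r)}}$ (already established in part (2) with $H = G$); since $kG_{(r)}$ is finite-dimensional Frobenius, arbitrary direct sums of injectives are injective, so $\ind_H^G(M)$ is mock injective.

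The main technical obstacle is cleanly executing the reduction $M \cong M^H \otimes k[H]$ from Proposition \ref{H0} and then appealing to the standard facts $\ind_H^G(V|_H \otimes N) \cong V \otimes \ind_H^G(N)$ and $\ind_H^G(k[H]) \cong k[G]$ from Jantzen's Part I. An alternative, shorter route (that bypasses the unipotence hypothesis altogether) is to note that $\ind_H^G$ is right adjoint to the exact restriction functor and so preserves rationally injective modules, and that rational $G$-injectives restrict to $G_{(r)}$-injectives (being summands of modules of the form $V \otimes k[G]$); this gives mock injectivity of $\ind_H^G(M)$ directly. The explicit form $\ind_H^G(M) \cong M^H \otimes k[G]$ obtained via Proposition \ref{H0} is sharper, however, and closer in spirit to the paper's emphasis on explicit structure.
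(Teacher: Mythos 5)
Your proposal is correct and follows essentially the same route as the paper: part (1) by citation of Cline--Parshall--Scott, part (2) by the argument of Proposition \ref{fail} (projectivity of $k[G]$ over $kG_{(r)}$ plus freeness of $kG_{(r)}$ over $kH_{(r)}$), and part (3) by combining Proposition \ref{H0} with the tensor identity and $\ind_H^G(k[H])\cong k[G]$ to reduce to the injectivity of $k[G]$ over Frobenius kernels. Your closing remark --- that $\ind_H^G$, being right adjoint to the exact restriction functor, preserves rational injectives, so that $\ind_H^G(M)$ is in fact rationally injective as a $G$-module without any unipotence hypothesis --- is a valid, slightly stronger observation not spelled out in the paper.
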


\begin{proof}
As mentioned above, (1) is a restatement of Theorem 4.3 and Corollary 4.5 of \cite{CPS77}.
The proof of (2) is given in the proof of Proposition \ref{fail} (with the notational change 
of replacing $U$ by $H$ in that proof.  Finally, (3) follows from (2) and Proposition 2.12
which asserts for $H$ unipotent that any rationally injective $H$-module $L$ is 
isomorphic to $H^0(H,L) \otimes k[H]$.
\end{proof}

\begin{cor}
\label{mockexist}
Let $G$ be linear algebraic group which is not reductive.  Then
there exist mock injective $G$-modules which are not rationally  injective.
\end{cor}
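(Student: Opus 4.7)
The plan is to observe that the construction in Proposition \ref{fail} uses no more than an embedding of a non-reductive closed subgroup into a reductive group, so it generalizes immediately from the unipotent case to any non-reductive $G$. Since $G$ is a linear algebraic group, fix a closed embedding $G \hookrightarrow \tilde G$ into some reductive linear algebraic group $\tilde G$ (for instance $\tilde G = GL_N$). I claim that $N := k[\tilde G]$, viewed as a rational $G$-module by restriction, is the desired mock injective that is not rationally injective.

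Mock injectivity of $N$ follows by repeating the second half of the proof of Proposition \ref{fail} without any appeal to unipotence. Since $k[\tilde G]$ is rationally injective as a $\tilde G$-module, it is projective as a module over $k\tilde G_{(r)}$ for every $r \geq 1$. Because $G_{(r)} \subset \tilde G_{(r)}$ is a closed embedding of finite infinitesimal group schemes, $k\tilde G_{(r)}$ is free as a $kG_{(r)}$-module (see \cite{J}), so restriction preserves projectivity. Since $kG_{(r)}$ is a finite-dimensional Hopf algebra, hence Frobenius, projective and injective $kG_{(r)}$-modules coincide; thus $N|_{G_{(r)}}$ is injective for every $r$.

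Failure of rational injectivity is a direct application of Proposition \ref{prop:induction}(1) to the pair $(\tilde G, G)$: the restriction $k[\tilde G]|_G$ is rationally injective if and only if $\tilde G/G$ is affine. The characterization from \cite{CPS77} recalled in Proposition \ref{prop:induction}(1), together with the assumptions that $\tilde G$ is reductive and $G$ is not, forces $\tilde G/G$ to fail to be affine, and hence $N$ to fail to be a rationally injective $G$-module.

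There is no substantive obstacle; essentially every ingredient is already assembled in the excerpt, and the main content of the corollary is simply the observation that the unipotence hypothesis in Proposition \ref{fail} can be relaxed to the weaker assumption of non-reductivity, since both halves of that argument — freeness of $k\tilde G_{(r)}$ over $kG_{(r)}$ and non-affineness of $\tilde G/G$ — depend only on this weaker hypothesis.
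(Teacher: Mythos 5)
Your proposal is correct and follows essentially the same route as the paper: the paper's proof also fixes a closed embedding $G \subset GL_N$ and invokes Proposition \ref{prop:induction} to conclude that $k[GL_N]$ is mock injective (part (2), whose proof is exactly the freeness argument for $kG_{(r)} \subset k(GL_N)_{(r)}$ you spell out) but not rationally injective as a $G$-module (part (1), the Cline--Parshall--Scott criterion). The only difference is that you unfold the proof of part (2) rather than citing it.
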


\begin{proof}
If $G$ is not reductive and $G \subset GL_N$ is a closed embedding of $G$
into some $GL_N$, then we may apply Corollary \ref{induction} to 
conclude that $k[GL_N]$ is mock injective but not injective as a rational $G$-module.
\end{proof}

\begin{prop}
Let $U$ be a connected, unipotent algebraic group.  Then there exists some rational $U$-submodule of $k[U]$
which is mock injective but not injective.
\end{prop}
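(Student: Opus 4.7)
The plan is to realize the desired submodule as the kernel of a suitable surjection $k[U] \twoheadrightarrow Q$, where $Q$ is a carefully chosen nonzero mock injective rational $U$-module.

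First, I would observe that since $U$ is connected and unipotent, every irreducible rational $U$-module is trivial, so the socle of any rational $U$-module $M$ equals $H^0(U,M)$. In particular, the socle of $k[U]$ is the one-dimensional trivial submodule $k$, and $k[U]$ is the injective hull of $k$ in the category of rational $U$-modules. Hence any proper rational $U$-submodule $L \subsetneq k[U]$ is automatically \emph{not} rationally injective: if it were, then (since its socle is contained in $k$ and nonzero) it would equal the injective hull of $k$, namely $k[U]$ itself. So the only thing to construct is a proper nonzero $U$-submodule $L \subset k[U]$ which is mock injective.

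Second, I would produce $L$ as the kernel of a surjection of rational $U$-modules
$$0 \longrightarrow L \longrightarrow k[U] \longrightarrow Q \longrightarrow 0,$$
in which $Q$ is a nonzero, proper, rationally-$U$-module quotient of $k[U]$ that is mock injective (that is, $Q|_{U_{(r)}}$ is injective for every $r > 0$). Granted such a $Q$, the mock injectivity of $L$ is then automatic: restricting the sequence to each Frobenius kernel $U_{(r)}$, the injectivity of $Q|_{U_{(r)}}$ causes the sequence to split, so $L|_{U_{(r)}}$ is a direct summand of $k[U]|_{U_{(r)}}$; the latter is rationally injective over $U_{(r)}$ (since $k[U]$ is rationally injective over $U$), and a direct summand of an injective is injective. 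Together with the first paragraph, this yields the conclusion.

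The main obstacle, and the step in which I expect the real work to lie, is the construction of a nonzero proper mock injective quotient $Q$ of $k[U]$. A natural starting point is Corollary \ref{mockexist} together with Proposition \ref{prop:induction}: an embedding $U \subset G$ into a reductive group provides the rational $U$-module $k[G]|_U$ which is mock injective but not rationally injective. One then seeks to transport this failure of rational injectivity to a quotient of $k[U]$ itself, for instance by composing the restriction map $k[G]\twoheadrightarrow k[U]$ with a mock-injective quotient of $k[G]|_U$, or by directly exhibiting a sub-$U$-module of $k[U]$ whose corresponding quotient inherits mock injectivity from the Frobenius-kernel splittings available on $k[U]$. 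One should exploit the fact that $k[U]|_{U_{(r)}}$ is free over $kU_{(r)}$ (so that any $U_{(r)}$-subcomodule of $k[U]$ which is itself free yields an injective quotient at level $r$), and then argue using a compatibility between the filtrations of $k[U]$ by $U_{(r)}$-summands for varying $r$.

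Once $Q$ is in hand, the three remaining checks --- that $L$ is nonzero, that $L|_{U_{(r)}}$ is injective for every $r$, and that $L$ fails to be rationally injective --- follow immediately from the above structural observations.
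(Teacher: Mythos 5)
Your two reduction steps are both correct and consistent with the logic of the paper: since $U$ is connected unipotent, $k[U]$ is the indecomposable injective hull of the trivial module, so \emph{every} proper nonzero rational submodule of $k[U]$ is automatically non-injective; and the kernel of a surjection $k[U]\twoheadrightarrow Q$ with $Q$ mock injective is mock injective by the splitting argument over each $U_{(r)}$. The problem is that these reductions carry essentially no content: producing a proper nonzero mock injective quotient of $k[U]$ is \emph{equivalent} to producing a proper nonzero mock injective submodule (the same $U_{(r)}$-splitting argument runs in both directions), so you have restated the proposition rather than proved it. The step you explicitly defer --- ``the construction of a nonzero proper mock injective quotient $Q$ of $k[U]$'' --- is the entire theorem, and none of the candidate routes you sketch (composing $k[G]\twoheadrightarrow k[U]$ with a quotient of $k[G]|_U$, or ``compatibility between the filtrations of $k[U]$ by $U_{(r)}$-summands'') is developed far enough to close it. As written, the proposal is a correct framing with the proof missing.

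For comparison, the paper fills this gap as follows. Start from a mock injective, non-injective rational $U$-module $L$ (Corollary \ref{mockexist}), with socle $L_0=H^0(U,L)$, and fix an embedding $i\colon L\hookrightarrow L_0\otimes k[U]$ extending $L_0\subset L_0\otimes k[U]$ as in Proposition \ref{H0}. A colimit argument over finite-dimensional subspaces $W\subset L_0$ reduces to the case $\dim_k L_0<\infty$, since mock injectivity of $L$ passes to $L_W=i^{-1}(W\otimes k[U])$ while non-injectivity survives for some $W$. One then inducts downward on $\dim_k L_0$: choosing a codimension-one $L_0'\subset L_0$ with quotient $\ol L_0\cong k$, the image $\ol L$ of $L$ in $\ol L_0\otimes k[U]\cong k[U]$ is a $U_{(r)}$-direct summand of $k[U]$ for every $r$ (because the $U_{(r)}$-splittings of $i$ descend), hence mock injective. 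If $\ol L\neq k[U]$ one is done by your first observation; otherwise $\ol L$ is injective, so $L'=\ker(L\to\ol L)$ is again mock injective and non-injective with strictly smaller socle, and the induction continues. This image-of-a-projection construction, not a quotient of $k[U]$, is where the actual work happens, and it is absent from your proposal.
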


\begin{proof}
Using Corollary \ref{mockexist}, choose some mock injective rational $U$-module which is 
not injective, and let 
$L_0 = H^0(U,L) \  \subset \ L$ denote the socle of $L$.  Observe that any extension
$L \to L_0\otimes k[U]$ of $L_0 \subset L_0 \otimes k[U]$ must be an inclusion.  We fix some 
such extension $i: L \to L_0\otimes k[U]$.  Write $L_0$ as a colimit of its finite dimensional subspaces, 
$$L_0 \simeq \varinjlim_{W \subset L_0, dim_k W < \infty} W, \quad  \quad 
L_0 \otimes k[U] \ = \   \varinjlim_{W \subset L_0, dim_k W < \infty} W\otimes k[U].$$
Set $L_W \ \subset \ L$ to be $i^{-1}(W \otimes k[U])$.  

Since $L$ is mock injective, $i: L \to L_0\otimes k[U]$ splits as a $kU_{(r)}$-module for all $r > 0$
so that the restriction of $i$ to any $W$, $i_W: L_W \to W \otimes k[U]$ also 
splits as a $kU_{(r)}$-module for all $r > 0$.   Since $L$ is not injective, $i$ is not an isomorphism
so that some $i_W$ is not an isomorphism.  We may therefore assume that $L = L_W$; in which
case, $H^0(U,L) = W$ is finite dimensional.   

We  now proceed by descending 
induction on the dimension of $L_0$.  
Let $L_0^\prime \subset L_0$ be a subspace of codimension 1 with 1-dimensional quotient $\pi: L \to \ol L_0$;
set  $\ol L$ to be the image of $L$ under the map $(\pi\otimes 1) \circ i: L \to k[U]$, and set $L^\prime \equiv ker\{ L \to \ol L \}$.  
We employ the following map of short exact sequences of rational $U$-modules:
\begin{equation}
\label{induction}
\xymatrix{0 \ar[r] & L^\prime \ar[r] \ar[d]_{i\prime}  & L \ar[r] \ar[d]_i  & \ol L \ar[r] \ar[d]_{\ol i} \ar[r] & 0\\
0 \ar[r] & L_0^\prime \otimes k[U] \ar[r] & L_0 \otimes k[U]  \ar[r]^-{\pi\otimes 1} & \ol L_0 \otimes k[U] \ar[r] & 0} .
\end{equation}

Since $L$ is injective as a $kU_{(r)}$-module, the inclusion $i: L \to L_0\otimes k[U]$ 
splits as a map of $kU_{(r)}$-modules for any $r > 0$.  Together with a (vector space) splitting
of $\pi: L_0 \to \ol L_0$,  a $kU_{(r)}$-splitting of $i$ determines a $kU_{(r)}$-splitting
of $\ol L \to \ol L_0 \otimes k[T]$.  Thus, $\ol L$ is free as a $kU_{(r)}$-module for any $r > 0$.  Consequently, the 
top row of (\ref{induction}) is split as a short exact sequence of $kU_{(r)}$-modules for any $r > 0$.  Thus, 
$L^\prime$ is mock injective.  

Finally, either $\ol i$ is not an isomorphism (providing a non-injective, mock injective submodule
of $k[U]$), or we may apply induction since $dim((L^\prime)_0) < dim(L)$.
\end{proof}

We next list a few closure properties of the category of mock injectives.

\begin{prop}
Let $G$   be a connected, linear algebraic group of exponential type.    
\begin{enumerate}
\item 
If $M_1, M_2$ are rational $G$-modules which are mock injective, then $M_1 \otimes M_2$ is 
mock injective.
\item
If $M_1, M_2$ are rational $G$-modules which are mock injective and
if 
$$0 \to \ M_1 \ \to \ M \to \ M_3 \to 0$$ is a short exact sequence of rational $G$-modules,
then $M$ is also mock injective.
\item
An arbitrary direct sum of mock injective rational $G$-modules $\{ M_i, \ i\in I\}$, 
$\bigoplus_{i \in I} M_i$, is mock injective.
\end{enumerate}
\end{prop}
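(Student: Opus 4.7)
The plan is to reduce each closure property to a corresponding closure property for injective $kG_{(r)}$-modules, using the fact that a rational $G$-module $M$ is mock injective precisely when $M|_{G_{(r)}}$ is injective for every $r > 0$. The critical algebraic inputs are that $kG_{(r)}$ is a finite-dimensional cocommutative Hopf algebra, hence Frobenius self-injective (so injective equals projective over $kG_{(r)}$), and moreover finite dimensional, hence noetherian.

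For property (1), I would restrict to a fixed Frobenius kernel and note $(M_1 \otimes M_2)|_{G_{(r)}} = M_1|_{G_{(r)}} \otimes M_2|_{G_{(r)}}$, where the tensor product carries the diagonal action. The main step is the standard fact that for a Hopf algebra $H$, the free module $H$ tensored with any module $V$ (under the diagonal action) is again free: the twist map $h \otimes v \mapsto \sum h_{(1)} \otimes h_{(2)} v$ is an isomorphism from $H \otimes V_{\mathrm{triv}}$ to $H \otimes V$. Since $kG_{(r)}$ is Frobenius, injective equals projective, so each $M_i|_{G_{(r)}}$ is a summand of a free module; tensoring and applying the twist argument shows $M_1|_{G_{(r)}} \otimes M_2|_{G_{(r)}}$ is a summand of a free $kG_{(r)}$-module, hence projective, hence injective.

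For property (2), after restricting to $G_{(r)}$ the short exact sequence $0 \to M_1|_{G_{(r)}} \to M|_{G_{(r)}} \to M_3|_{G_{(r)}} \to 0$ splits, because $M_3|_{G_{(r)}}$ is injective (equivalently projective). Hence $M|_{G_{(r)}} \cong M_1|_{G_{(r)}} \oplus M_3|_{G_{(r)}}$ is a direct sum of two injective $kG_{(r)}$-modules, hence injective. For property (3), the restriction $(\bigoplus_i M_i)|_{G_{(r)}} = \bigoplus_i (M_i|_{G_{(r)}})$ is a direct sum of injective $kG_{(r)}$-modules; since $kG_{(r)}$ is a noetherian ring (being finite-dimensional), arbitrary direct sums of injective $kG_{(r)}$-modules are injective, so the sum is mock injective.

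The only real obstacle is property (1): verifying that injectivity (equivalently projectivity) is preserved by tensor products with the diagonal action over $kG_{(r)}$. The cleanest route is the Hopf-algebraic twist isomorphism described above, which shows that $kG_{(r)} \otimes V$ is free for any $V$, and then using that a summand of a free module is projective. Properties (2) and (3) are essentially formal once one observes that injectivity and projectivity coincide over the Frobenius algebra $kG_{(r)}$.
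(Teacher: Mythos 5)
Your proof is correct, but it takes a genuinely different route from the paper's. The paper invokes its support-variety machinery: using the equivalence (cited from \cite{F15}) between mock injectivity and the vanishing of $V(G)_M$, it deduces (1) and (2) immediately from the tensor-product and extension properties of support varieties in Theorem 4.6 of \cite{F15}, and handles arbitrary direct sums in (3) by observing that freeness of the $k[t]/t^p$-action attached to each $1$-parameter subgroup is inherited by direct sums. You instead argue directly at the level of the Frobenius kernels, using only that $kG_{(r)}$ is a finite-dimensional (hence Noetherian) Frobenius algebra, so that injective and projective modules coincide, that the Hopf-algebraic twist isomorphism makes $kG_{(r)}\otimes V$ free for any $V$ (whence tensor products of injectives are injective), and that over a Noetherian ring arbitrary direct sums of injectives are injective. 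Your argument is more elementary and self-contained, and it notably never uses the exponential-type hypothesis: all three closure properties hold for any connected linear algebraic group under the Frobenius-kernel definition of mock injectivity, whereas the paper's route requires the support-variety theory of \cite{F15} and hence a structure of exponential type. What the paper's proof buys in exchange is brevity and a tighter connection to the support-variety theme of the article. One small point: the statement as printed has a typo --- the short exact sequence in (2) involves $M_3$ while the hypothesis names $M_1,M_2$; you read it, correctly, as requiring both ends of the extension to be mock injective, which is exactly what your splitting argument needs.
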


\begin{proof}
We use the criterion for  mock injectivity mentioned in Definition \ref{mockinj}.  With this criterion,
(1) and (2) follow immediately from Theorem 4.6 of \cite{F15}, as does property (3) if $I$ is finite.
To prove (3) for any arbitrary indexing set $I$, we use the observation that $\ul B \notin V(G)_M$
it suffices to show that the action of $\sum_s \cE_{B_s *}(u_s)$ determines a free action of
$k[t]/t^p$ on $M$; this condition is clearly inherited by arbitrary direct sums.
\end{proof}

We now introduce the class of mock trivial $G$-modules.

\begin{defn}
Let $G$ be a linear algebraic group with a structure of exponential type.
Then a rational $G$-module $M$ is said
to be {\bf mock trivial} if $M = M_{0]}$; equivalently, if the coproduct structure
$\Delta: M \to M\otimes k[G]$ factors through $M \to M \otimes k[G]_{[0]}$.  
\end{defn}

\begin{prop}
Let $G$ be a linear algebraic group with a structure of exponential type.
Then a rational $G$-module $M$ is  {\bf mock trivial} if and only if the pull-back 
of $M$ along any 1-parameter subgroup $\psi: \bG_a \to G$ is trivial.

This implies in particular that the $p$-nilpotent action of $G$ at every 1-parameter
subgroup $\psi \in V(G)$ is trivial which in turn implies that the support variety
of $M$, $V(G)_M$, equals all of $V(G)$.
 \end{prop}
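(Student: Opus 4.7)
The strategy is to translate mock triviality into the pointwise condition supplied by Proposition \ref{alternate}: $M = M_{[0]}$ if and only if $(\cE_B)_*(v_j)(m) = 0$ for every $m \in M$, every $B \in \cN_p(\fg)$, and every $j > 0$. Since the pullback $\psi^*M$ along a 1-parameter subgroup $\psi\colon \bG_a \to G$ is trivial precisely when $\psi_*(v_j)$ acts as zero on $M$ for every $j > 0$, both implications of the biconditional become dual statements about elements of $kG$ annihilating $M$.

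The backward implication is immediate: each $\cE_B$ with $B \in \cN_p(\fg)$ is itself a 1-parameter subgroup by condition (1) of Definition \ref{str-exptype}, so the hypothesized triviality of $\cE_B^*M$ for all $B$ feeds directly into Proposition \ref{alternate} to give $M = M_{[0]}$.

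For the forward implication, let $\psi\colon \bG_a \to G$ be arbitrary; condition (4) of Definition \ref{str-exptype} writes $\psi = \cE_{\ul B}$ for some $\ul B \in \cC_r(\cN_p(\fg))$. As a morphism of schemes, $\psi$ factors through the diagonal $\bG_a \to \bG_a^r$, the product of the Frobenius-twisted 1-parameter subgroups $\cE_{B_s}\circ F^s$, and the $r$-fold multiplication in $G$. Dualizing, $\psi^*$ decomposes as $\mu \circ \bigl(\bigotimes_s F^{s*}\circ \cE_{B_s}^*\bigr) \circ \Delta_G^{(r-1)}$, where $\mu$ is multiplication in $k[\bG_a]$. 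By Proposition \ref{subcoalg}, $k[G]_{[0]}$ is a sub-Hopf algebra, so the iterated coproduct preserves $k[G]_{[0]}^{\otimes r}$; by the defining property of $k[G]_{[0]}$, each $\cE_{B_s}^*$ sends $k[G]_{[0]}$ into $k \subset k[\bG_a]$; and both $F^{s*}$ and $\mu$ preserve the subspace of constants. Hence $\psi^*\bigl(k[G]_{[0]}\bigr) \subset k$, and applying $(1\otimes \psi^*)$ to a coproduct $\Delta_M(m) \in M\otimes k[G]_{[0]}$ lands in $M \otimes k$, so $\psi^*M$ is trivial. The main obstacle is exactly this step: one needs both the sub-coalgebra structure of $k[G]_{[0]}$ (to decompose along $\Delta_G^{(r-1)}$) and its stability under the algebra multiplication (to reassemble the tensor factors through $\mu$), which together is precisely the sub-Hopf-algebra content of Proposition \ref{subcoalg}.

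For the final assertion, the triviality of each $\cE_{B_s}^*M$ forces $\cE_{B_s *}(u_s)$ to annihilate $M$, so the $p$-nilpotent operator $\sum_{s \geq 0}\cE_{B_s *}(u_s)$ associated to $\cE_{\ul B}$ acts as zero on $M$. Provided $M$ is non-zero, the zero operator cannot endow $M$ with the structure of a free $k[t]/t^p$-module, so every 1-parameter subgroup lies in $V(G)_M$ and $V(G)_M = V(G)$.
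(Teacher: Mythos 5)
Your proof is correct and follows essentially the same route as the paper: both directions reduce, via Proposition \ref{alternate}, to the basic 1-parameter subgroups $\cE_B$, and the forward implication rests on the decomposition $\psi = \prod_s \cE_{B_s}\circ F^s$ --- you phrase this dually through the factorization of $\psi^*$ through the iterated coproduct of $k[G]$, where the paper works with the corresponding product formula for $\psi_*(v_j)$ in $kG$. One small correction to your commentary: the map $\mu$ in your factorization is multiplication in $k[\bG_a]$ acting on constants, so only the sub-coalgebra property of $(k[G])_{[0]}$ from Proposition \ref{subcoalg} is actually used, not its closure under multiplication in $k[G]$.
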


\begin{proof} 
By Proposition \ref{alternate},  $M$ is mock trivial if and only if $\cE_B^*(M)$ is
trivial as $\bG_a$-module for all 1-parameter subgroups of the form 
$\cE_B: \bG_a \to G$, and this is the case if and 
only if $(\cE_B \circ F^i)^*(M)$ is trivial as a $\bG_a$-module for all $\cE_B: \bG_a \to B$
and all $i \geq 0$.  Since any 1-parameter subgroup of $G$ is of the 
form $\cE_{\ul B} = \prod_s \cE_{B_s} \circ F^s$ and since the 
action of $(\prod_s \cE_{B_s }\circ F^s)_*(v_j)$ on $M$ equals the product of
the actions of $(\cE_{B_s})_*(v_{j-p^s})$ on $M$,  we conclude that $\psi^*(M)$ is a 
trivial $\bG_a$-module whenever $M$ is mock trivial.  The converse is clear
from the first equivalence mentioned at the beginning of this proof.

As recalled prior to Definition \ref{ratsupp}, the $p$-nilpotent action of $G$ 
at $\cE_{\ul B} = \prod_s \cE_{B_s} \circ F^s$  is defined in \cite[2.9.1]{F15}
to be the action of $(\sum_{s \geq 0} \cE_{B_s})_*(u_s)$.  By the preceding paragraph, if 
$M$ is mock trivial, this action is trivial.  This immediately implies that $V(G)_M = V(G)$
for any mock trivial rational $G$-module.
\end{proof}

We state a few properties of the class of mock trivial $G$-modules.  Of course,
even the class of trivial $G$-modules need not be closed under extensions.  

\begin{prop}
Let $G$ be a linear algebraic group with a structure of exponential type.  
\begin{enumerate}
\item
If $G \ \not= \ U_p(G)$, then $k[G]_{[0]}$ is a non-trivial sub Hopf algebra of $k[G]$
and thus is an indecomposable mock trivial module, but not trivial.  
\item
If $G \ \not= \ U_p(G)$, then there exist finite dimensional rational $G$-modules
which are mock trivial but not trivial.
\item
If $H \subset G$ is an embedding of exponential type such that $x^p = 1, \ x \in H$
(i.e., $H = U_p(H)$ and if $M$ is
a mock trivial rational $G$-module, then  $M$ restricted to $H$ is trivial.
\item
If $M$ is a rational $G$-module such that $M = M_{[0]}$ and if $M^\prime \subset M$
is rational $G$-submodule, that $M^\prime = (M^\prime)_{[0]}$; similarly, any quotient
$\overline M$ of a rational $G$-module such that $M = M_{[0]}$ also satisfies 
$\ol M = (\ol M)_{[0]}$.
\item
A colimit $\varinjlim_\alpha L_\alpha$ of mock trivial $G$-modules  
$L_\alpha$ is again mock trivial.
\end{enumerate}
\end{prop}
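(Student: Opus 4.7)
The plan is to handle each of the five parts by leveraging Proposition \ref{prop:embed} (which identifies the augmentation ideal of $k[G]_{[0]}$ with the defining ideal of $\cU_p(G) \subset G$) together with Theorem \ref{properties}; parts (4) and (5) are essentially formal, while parts (1)--(3) reduce to concrete statements about $k[G]_{[0]}$.

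For (1), Proposition \ref{subcoalg} already identifies $k[G]_{[0]}$ as a sub-Hopf algebra, and mock triviality is immediate from Theorem \ref{properties}(1). Non-triviality ($k[G]_{[0]} \neq k$) follows directly from Proposition \ref{prop:embed}: the augmentation ideal of $k[G]_{[0]}$ is the ideal of functions on $G$ vanishing on $\cU_p(G)$, which is non-zero precisely because $G \neq \cU_p(G)$. Non-triviality as a $G$-module under right translation follows because $H^0(G, k[G]) = k \cdot 1$ for connected $G$, while $\dim k[G]_{[0]} > 1$. For (2), every rational $G$-module is the directed union of its finite-dimensional submodules; I would pick a finite-dimensional $G$-submodule $V \subset k[G]_{[0]}$ containing a non-constant function. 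Then $V$ is mock trivial by part (4) below and non-trivial by the argument just given.

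For (3), apply Proposition \ref{prop:embed} to $H$: the hypothesis $H = \cU_p(H)$ means the defining ideal of $\cU_p(H) \subset H$ is zero, so the augmentation ideal of $k[H]_{[0]}$ vanishes, yielding $k[H]_{[0]} = k$. Then Theorem \ref{properties}(9), applied to the exponential-type embedding $H \subset G$ and the hypothesis $M = M_{[0]}$, gives $M|_H = (M|_H)_{[0]}$; its coproduct must factor through $M \otimes k[H]_{[0]} = M \otimes k$, which is exactly the statement that $H$ acts trivially on $M$.

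Parts (4) and (5) are formal. For (4), the submodule case follows by contrapositive from Theorem \ref{properties}(8): any $m' \in M' \setminus M'_{[0]}$ would give $m' \in M \setminus M_{[0]} = \emptyset$; the quotient case is Theorem \ref{properties}(7) applied to the surjection $M \twoheadrightarrow \overline{M}$. For (5), the colimit of rational $G$-modules is computed on underlying vector spaces, and tensoring with the fixed vector space $k[G]_{[0]}$ commutes with directed colimits, so the coproduct of $\varinjlim_\alpha L_\alpha$ factors through $(\varinjlim_\alpha L_\alpha) \otimes k[G]_{[0]}$. The main obstacle I anticipate is the indecomposability claim in (1): the text asserts this follows ``thus'' from the sub-Hopf algebra structure, but making it precise likely requires observing that a sub-Hopf algebra of $k[G]$ corresponds to a quotient group of $G$ (connected when $G$ is), and then extracting indecomposability of the associated $G$-comodule from that connectedness.
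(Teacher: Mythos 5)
Your treatment of parts (2)--(5) follows the paper's argument essentially verbatim: (2) is deduced from (1) by passing to a finite dimensional $G$-submodule containing a non-constant function, (4) and (5) are the formal closure properties of the abelian category of $(k[G])_{[0]}$-comodules (your appeal to Theorem \ref{properties}(7) and (8) is exactly the right way to make ``evident'' precise), and for (3) you correctly combine the restriction statement of Theorem \ref{properties}(9) with Proposition \ref{prop:embed} applied to $H$: the hypothesis $H = \cU_p(H)$ forces the augmentation ideal of $k[H]_{[0]}$ to vanish, so $k[H]_{[0]} = k$ and the coproduct of $M|_H$ lands in $M \otimes k$. In fact you spell out this last step more explicitly than the paper does. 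Your non-triviality argument in (1) and (2), via $H^0(G,k[G]) = k\cdot 1$ together with $\dim k[G]_{[0]} > 1$, is sound and is what the paper intends.

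The one genuine gap is the indecomposability claim in (1), which you yourself flag as the anticipated obstacle. Your proposed repair --- that a sub-Hopf algebra of $k[G]$ is the coordinate ring of a quotient group, connected when $G$ is, and that connectedness then yields indecomposability of the comodule --- would fail: the coordinate ring of a connected algebraic group can decompose completely as a rational module (for instance $k[\bG_m] = \bigoplus_{n \in \bZ} k\cdot T^n$ is a direct sum of one-dimensional weight subcomodules), so connectedness of the quotient group buys you nothing here. The paper's argument is different and more direct: it asserts that the socle of $k[G]_{[0]}$ is one-dimensional, and any module with simple socle is indecomposable, since each nonzero summand of a direct sum decomposition must contribute nontrivially to the socle. (The paper does not itself justify the one-dimensionality of that socle; it is immediate when $G$ is unipotent, where the socle of all of $k[G]$ is $k\cdot 1$, but it is a claim requiring an argument in general.) So for this one point you would need to replace your sketch by the socle argument.
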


\begin{proof}
By Proposition \ref{prop:embed}, $k[G]_{[0]}$ is non-zero whenever $G \ \not= \ U_p(G)$
and by Proposition \ref{subcoalg} $k[G]_{[0]}$ is a sub Hopf algebra of $k[G]$; in particular, $k[G]_{[0]}$ is
a non-trivial rational $G$-module $M$ such that $M = M_{[0]}$.  Since the socle of $k[G]_{[0]}$ is 1 dimensional,
$k[G]_{[0]}$ is indecomposable.  To prove (2), we apply (1) and recall that any rational $G$-module is
a union of its finite dimensional submodules.

If $H \subset G$ is an embedding of exponential type, then $\cE: \cN_p(G) \times \bG_a \to G$
restricts to $\cE_H: \cN_p(H) \times \bG_a \to H$;.  this implies that if the comodule structure
for $M$ has the property that it arises from a coproduct $M \to M \otimes k[G]_{[0]}$, then the restriction
to $H$ has the property that the coproduct arises from a coproduct $M  \to M \otimes k[H]_{[0]}$.
Thus, Property (3) follows from Proposition \ref{alternate} (in the special case $d=1$).

Properties (4) and (5) are evident properties of the abelian category of $k[G]_{[0]}$ comodules.
\end{proof}

In the following proposition, we view (closed) points of the support variety
of a finite group scheme as equivalence classes of $p$-points as in \cite{FP1}
rather than use 1-parameter subgroups which provide distinguished representatives
of equivalence classes of $\p$-points as in \cite{SFB1}.  Since we do not
consider the scheme structure of support varieties, we do not use the language and
technology of $\pi$-points found in \cite{FP2}.  For a finite group scheme
$H$, $P(H)$ consists of the closed points of the scheme $\Pi(H)$ of $\pi$-points;
the points of $P(H)$ are equivalence classes of $p$-points of $H$.

\begin{prop}
\label{characterize}
Let $G$ be a linear algebraic group of exponential type.  If a rational $G$-module $M$
is mock trivial then the restriction of $M$ to each Frobenius kernel $G_{(r)}$ satisfies
the condition that every for every $[\alpha] \in P(G_{(r)})$ there exists a representative 
$\alpha: k[t]/t^p \to kG_{(r)}$ such that $\alpha^*(M)$ is trivial as a $k[t]/t^p$-module.
\end{prop}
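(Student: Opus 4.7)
The plan is to produce, for each equivalence class $[\alpha] \in P(G_{(r)})$, a distinguished representative that arises from an honest 1-parameter subgroup $\bG_a \to G$, and then to exploit mock triviality to verify that this representative has zero action on $M$.

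First I would invoke the classification of $\pi$-points of $G_{(r)}$ established in \cite[4.3]{F15} and recalled in the discussion preceding Proposition~\ref{Gsupp}. Because $G$ carries a structure of exponential type, every closed point of the $\pi$-point scheme of $G_{(r)}$ is represented by a $\pi$-point of the form
\[
\alpha_{\ul B}: k[t]/t^p \longrightarrow kG_{(r)}, \qquad t \longmapsto (\cE_{\Lambda_r(\ul B)})_*(u_{r-1}),
\]
for some $\ul B \in \cC_r(\cN_p(\fg))$, where $\Lambda_r(\ul B) = (B_{r-1}, B_{r-2},\ldots,B_0)$ and $\cE_{\Lambda_r(\ul B)}: \bG_a \to G$ is the associated 1-parameter subgroup. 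In particular every $[\alpha] \in P(G_{(r)})$ admits such a representative $\alpha_{\ul B}$.

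Next I would apply the characterization of mock triviality given in the preceding proposition to the 1-parameter subgroup $\psi = \cE_{\Lambda_r(\ul B)}: \bG_a \to G$. Since $M$ is mock trivial, $\psi^*(M)$ is a trivial rational $\bG_a$-module; equivalently, every $v_j \in k\bG_a$ with $j \geq 1$ acts as zero on $\psi^*(M)$. Specialising to $j = p^{r-1}$, i.e.\ to $u_{r-1}$, and transporting through the map on hyperalgebras induced by $\psi$, the element $(\cE_{\Lambda_r(\ul B)})_*(u_{r-1}) \in kG_{(r)}$ annihilates $M$. This is precisely the assertion that $\alpha_{\ul B}^*(M)$ is a trivial $k[t]/t^p$-module. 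Choosing $\alpha = \alpha_{\ul B}$ as the representative of $[\alpha]$ finishes the argument.

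The only point requiring a little care is bookkeeping rather than a genuine obstacle: mock triviality is phrased in terms of 1-parameter subgroups $\bG_a \to G$, while the conclusion is phrased in terms of $\pi$-points of the Frobenius kernel $G_{(r)}$. The bridge is the equivalence of the two $\pi$-points $t \mapsto \sum_{s=0}^{r-1} \cE_{B_s*}(u_s)$ and $t \mapsto (\cE_{\Lambda_r(\ul B)})_*(u_{r-1})$ supplied by \cite[4.3]{F15}, together with Proposition~\ref{alternate}, which identifies mock triviality with the vanishing of $\cE_{B*}(v_j)$ on $M$ for all $B \in \cN_p(\fg)$ and all $j \geq 1$. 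Once these two descriptions are aligned, the verification is immediate.
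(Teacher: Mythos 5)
Your proof is correct and follows essentially the same route as the paper: parametrize $P(G_{(r)})$ by $\cC_r(\cN_p(\fg))$ via the exponential structure, then use mock triviality (through the preceding proposition) to see that the chosen representative annihilates $M$. The only cosmetic difference is that you take the representative $t \mapsto (\cE_{\Lambda_r(\ul B)})_*(u_{r-1})$ coming from a single $1$-parameter subgroup, whereas the paper takes the equivalent $\pi$-point $t \mapsto \sum_{i=0}^{r-1}(\cE_{B_i})_*(u_{r-1-i})$ and checks triviality summand by summand; both representatives work.
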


\begin{proof}
There is a natural homeomorphism relating $P(G_{(r)})$ to $V_r(G_{(r)}) \simeq \cC_r(\cN_p(\fg))$
given by sending $\ul B \in \cC_r(\cN_p(\fg))$ to  the $p$-point 
$\alpha_{\ul B}: k[t]/t^p \to kG_{(r)}, \ t\mapsto \cE_{(B_0,\ldots,B_{r-1})}(u_{r-1}).$
As shown in \cite[4.3]{F15}, the equivalence class of $[\alpha_{\ul B}] \in P(G_{(r)})$ 
is also represented by  the $\pi$-point $k[t]/t^p \to kG_{(r)}, \ t\mapsto \sum_{i=0}^{r-1} (\cE_{B_i})_*(u_{r-1-i})$
(see the discussion after Definition \ref{ratsupp}).  If $M$ is mock injective, then 
each $\sum_{i=0}^{r-1} (\cE_{B_i})_*(u_{r-1-i})$ acts trivially on $M$.
\end{proof}

We conclude with Grothendieck spectral sequences relating rational cohomology
to the structures we have considered.  We view the $t$-th right derived functor 
of the left exact functor $(-)_{[d]}$, 
$$R^t(-)_{[d]}: (G{\text -}Mod) \ \to \ ((k[G])_{[d]}{\text -}coMod),$$
as ``derived filtrations functors".

\begin{prop}
\label{Groth}
Let $G$ be a linear algebraic group  of exponential type.
For any $d \geq 0$, there is a natural identification of functors
$$H^0(G,-) \ \simeq \ Hom_{(k[G])_{[d]}{\text -}coMod)}(k,-) \circ (-)_{[d]}: (G\text{-}Mod) \to (vsp/k)$$
leading to a spectral sequence
$$R^sHom_{(k[G])_{[d]}{\text -}coMod)}(k,-) \circ R^t(-)_{[d]}(M) \ \Rightarrow H^{s+t}(G,M).$$
\end{prop}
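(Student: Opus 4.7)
The plan is to prove the proposition as a straightforward application of the Grothendieck spectral sequence for the composition of two left exact functors, once a few hypotheses are verified.

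First I would establish the functor identification. The trivial module $k$ carries the coproduct $k \to k \otimes k[G]$ with image in $k \otimes k \subset k \otimes (k[G])_{[0]} \subset k \otimes (k[G])_{[d]}$, so $k$ is an object of $((k[G])_{[d]}\text{-coMod})$ for every $d \geq 0$. For any rational $G$-module $M$ and any map of rational $G$-modules $f : k \to M$, Theorem \ref{properties}(7) forces $f(k) \subset M_{[d]}$. Conversely, any map $k \to M_{[d]}$ of $(k[G])_{[d]}$-comodules is in particular a map of rational $G$-modules under the fully faithful embedding $\iota_d$ (Proposition \ref{coMod}(1)). This yields the natural isomorphism
$$H^0(G,M) \ = \ \Hom_{k[G]\text{-coMod}}(k,M) \ \simeq \ \Hom_{(k[G])_{[d]}\text{-coMod}}(k, M_{[d]}),$$
which is precisely the asserted equality of functors $H^0(G,-) \simeq \Hom_{(k[G])_{[d]}\text{-coMod}}(k,-) \circ (-)_{[d]}$.

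Next I would verify the two hypotheses needed to invoke the Grothendieck spectral sequence for the composition $F \circ G$ with $G = (-)_{[d]}$ and $F = \Hom_{(k[G])_{[d]}\text{-coMod}}(k,-)$. Left-exactness of $G$ is Proposition \ref{coMod}(2) and left-exactness of $F$ is formal. Both categories $(G\text{-Mod})$ and $((k[G])_{[d]}\text{-coMod})$ have enough injectives; the latter is Proposition \ref{coMod}(4), and the former is the standard fact for rational $G$-modules. The essential remaining point is that $(-)_{[d]}$ carries injective rational $G$-modules to injective $(k[G])_{[d]}$-comodules; this follows from Proposition \ref{coMod}(3), since the embedding $\iota_d$ is a left adjoint to $(-)_{[d]}$ which is itself exact by Proposition \ref{coMod}(1), and a right adjoint to an exact functor always preserves injectives.

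With these ingredients in place, the Grothendieck spectral sequence for the composite of left exact functors immediately yields
$$E_2^{s,t} \ = \ R^s\Hom_{(k[G])_{[d]}\text{-coMod}}(k,-)\bigl(R^t(-)_{[d]}(M)\bigr) \ \Longrightarrow \ R^{s+t}H^0(G,M) \ = \ H^{s+t}(G,M),$$
as asserted. The only step that is not entirely mechanical is the preservation of injectives by $(-)_{[d]}$, and even this is immediate from the adjunction in Proposition \ref{coMod}; consequently I do not anticipate any serious obstacle beyond assembling these pieces cleanly.
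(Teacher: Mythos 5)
Your proposal is correct and follows essentially the same route as the paper: identify the composite with $H^0(G,-)$, note that $(-)_{[d]}$ has the exact left adjoint $\iota_d$ and hence preserves injectives, and invoke the Grothendieck spectral sequence for composites of left exact functors. The only cosmetic difference is that the paper verifies the identification by observing that both functors compute the invariants $M^G$ directly, whereas you argue through Hom-sets and the fully faithful embedding; both are fine.
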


\begin{proof}
The asserted identification of the composition  $Hom_{(k[G])_{[d]}{\text -}coMod)}(k,-) \circ (-)_{[d]}$
with $H^0(G,-)$ is made by observing that both send a rational $G$-module $M$ to the
subspace $M^G$ of invariant elements (which consists of those $m \in M$ such that 
$\Delta_M(m) = m\otimes 1 \in M \otimes k[G]$).

Since the functor $(-)_{[d]}$ has an exact left adjoint by Proposition \ref{coMod}(3) and therefore 
sends injectives to injectives,  
the Grothendieck spectral sequence for a composition of left exact functors applies and
has the asserted form (see \cite{Wei}).
\end{proof}


\begin{thebibliography}{20}




\bibitem{CLN} J. Carlson, Z. Lin, and D. Nakano, {\em Support varieties for modules over Chevalley groups and
classical Lie algebras}, Trans. A.M.S. {\bf 360} (2008), 1870 - 1906.


\bibitem{CPS77} E. Cline, B. Parshall, L. Scott,  {\em Induced Modules and Affine Quotients}, Math. Ann. 
{\bf 30} (1977), no. 1, 1--14.

\bibitem{F11} E. Friedlander, {\em Restrictions to $G(\bF_p)$ and $G_{(r)}$ of rational $G$-modules},
Compos. Math. {\bf 147} (2011), no. 6, 1955--1978.


\bibitem{F15} E. Friedlander, {\em Support varieties for rational representations,} Compos. Math {\bf 151} 
(2015), 765-792.

\bibitem{FP1} E. Friedlander, J. Pevtsova, {\em Representation-theoretic support spaces for finite group schemes},
Amer. J. Math. {\bf 127} (2005), 379-420.  Erratum, Amer. J. Math. {\bf 128} (2006), 1067-1068.

\bibitem{FP2} E. Friedlander, J. Pevtsova, {\em $\Pi$-supports for modules 
for finite group schemes},  Duke. Math. J. {\bf 139} (2007), 317--368.

%

\bibitem{Ho} J. Holte, {\em Asymptotic prime-power divisibility of binomial, generalized
binonomial, and multinomial coefficients}, Trans. AMS {\bf 349} (1997), 3837-3873.


\bibitem{J} J.C. Jantzen, {\em Representations of Algebraic groups},
Academic Press, (1987).

\bibitem{Ku} E. E. Kummer, {\em \" Uber die Erg\" anzungssa\" tze zu den allgemeinen 
Reciprocit\" atsgestzen}, J. Reine Angew. Math. {\bf 44} (1852), 93 - 146.

\bibitem{Sei} G. Seitz, {\em Unipotent elements, tilting modules, and saturation},
Invent. Math. {\bf 141} (2000), 467-502.

\bibitem{Sobj1} P. Sobaje,  {\em On exponentiation and infinitesimal one-parameter subgroups of reductive groups},
J. Algebra {\bf 385} (2013), 14-26.


\bibitem{SFB1} A. Suslin, E. Friedlander, C. Bendel,
{\em Infinitesimal 1-parameter subgroups and cohomology},
J. Amer. Math. Soc. {\bf 10} (1997), 693-728.

\bibitem{Wei} C. Weibel, {\em An Introduction to Homological Algebra} Cambridge University Press, (1995).
%

%
%
%
%
%
%

%
%
%
%
%
%
%

%
%
%
%
%
%
%
%
%
%


\end{thebibliography}
\end{document}